\newcommand{\mf}{\mathfrak}
\newcommand{\g}{\mf{g}}
\newcommand{\h}{\mf{h}}
\newcommand{\gl}{\mf{gl}}
\newcommand{\Z}{{\mathbb Z}}
\newcommand{\C}{{\mathbb C}}
\newcommand{\N}{{\mathbb N}}
\newcommand{\Q}{{\mathbb Q}}
\newcommand{\R}{\mathbb{R}}
\newcommand{\fH}{\mathfrak{H}}
\newcommand{\K}{{\mathcal K}}
\newcommand{\supp}{{\operatorname{Supp}}\xspace}
\renewcommand{\phi}{\varphi}
\renewcommand{\leq}{\leqslant}
\renewcommand{\geq}{\geqslant}
\def\sl{\mathfrak{sl}}
\def\gl{\mathfrak{gl}}
\def\l{\lambda}
\def\sl{\mathfrak{sl}}
\newtheorem{theorem}{Theorem}[section]
\newtheorem{proposition}[theorem]{Proposition}
\newtheorem{lemma}[theorem]{Lemma}
\newtheorem{corollary}[theorem]{Corollary}
\theoremstyle{remark}
\numberwithin{equation}{section}
\def\span{\mathrm{span}}
\def\supp{\mathrm{Supp}}
\newcommand\pxi[1]{\frac{\partial}{\partial\xi_{#1}}}
\def\Str{{\rm Str}}
\def\diff{{\rm diff}}
\begin{document}

%\begin{frontmatter}

%% Title, authors and addresses

%% use the tnoteref command within \title for footnotes;
%% use the tnotetext command for theassociated footnote;
%% use the fnref command within \author or \address for footnotes;
%% use the fntext command for theassociated footnote;
%% use the corref command within \author for corresponding author footnotes;
%% use the cortext command for theassociated footnote;
%% use the ead command for the email address,
%% and the form \ead[url] for the home page:
%% \title{Title\tnoteref{label1}}
%% \tnotetext[label1]{}
%% \author{Name\corref{cor1}\fnref{label2}}
%% \ead{email address}
%% \ead[url]{home page}
%% \fntext[label2]{}
%% \cortext[cor1]{}
%% \address{Address\fnref{label3}}
%% \fntext[label3]{}

\title[]{Classification of simple strong Harish-Chandra  modules over the Lie superalgebra of vector fields on $\C^{m|n}$}
\author{Yan-an Cai, Rencai L\"{u},  Yaohui Xue}
\maketitle

\begin{abstract}
  In this paper, we classify simple strong Harish-Chandra  modules over the Lie superalgebra $W_{m,n}$ of vector fields on $\C^{m|n}$. Any such module is the unique simple submodule of some tensor module $F(P,V)$ for a simple weight module $P$ over the Weyl superalgebra $\mathcal K_{m,n}$ and a simple weight module $V$ over the general linear superalgebra $\gl_{m,n}$.
\end{abstract}

\vskip 10pt \noindent {\em Keywords:}   Lie superalgebra of vector fields, simple weight modules, strong Harish-Chandra modules

\vskip 5pt
\noindent
{\em 2010  Math. Subj. Class.:}
17B10, 17B20, 17B65, 17B66, 17B68

%% use optional labels to link authors explicitly to addresses:
%% \author[label1,label2]{}
%% \address[label1]{}
%% \address[label2]{}

%\end{frontmatter}

%\linenumbers

\section{Introduction}

%{\bf Minimum target: Strong Harish-Chandra}

%Aim: Harish-Chandra}

We denote by $\Z, \Z_+, \N, \Q$, $\R$, $\R_+$ and $\C$ the sets of all integers, non-negative integers, positive integers, rational numbers, real numbers, non-negative real numbers and complex numbers, respectively. Linear span is denoted by $ \langle\,\rangle_{\R}, \langle\,\rangle_{\R_+}, \langle\,\rangle_{\Z},  \langle\,\rangle_{\Z_+}$, etc., the subscript indicating the coefficients. All vector spaces and algebras in this paper are over $\C$. Any module over a Lie superalgebra or an associative superalgebra is assumed to be $\Z_2$-graded. Let $0\neq (m,n)\in\Z_+^2$ and let $e_1,\dots,e_{m+n}$ be the standard basis of $\C^{m+n}$. For any $r\in\N$, denote by $\overline{1,r}$ the set $\{1,\ldots,r\}$.

Let $A_{m,n}$ (resp. $\mathcal A_{m,n}$) be the tensor superalgebra of the polynomial algebra $\C[t_1,\dots,t_m]$ (resp. Laurent polynomial algebra $\C[t_1^{\pm 1},\dots,t_m^{\pm 1}]$) in $m$ even variables $t_1,\dots,t_m$ and the exterior algebra $\Lambda(n)$ in $n$ odd variables $\xi_1,\dots,\xi_n$. For convenience, sometimes we may write $\xi_i$ as $t_{m+i}$ for $i\in\overline{1,n}$. Denote by $W_{m,n}$ (resp. $\mathcal W_{m,n}$) the Lie superalgebra of super-derivations of $A_{m,n}$ (resp. $\mathcal A_{m,n}$). Cartan $W$-type Lie superalgebra $W_{m,n}$ was introduced by V. Kac in \cite{K}, which is also the Lie superalgebra of vector fields on $\C^{m|n}$.

Weight modules with finite-dimensional weight spaces are called Harish-Chandra modules. The study of of Harish-Chandra modules over $W_{m,n}$ and $\mathcal W_{m,n}$ is a subject of interest by both mathematicians and physicists during the past decades. Such modules over the Virasoro algebra (which is the universal central extension of $\mathcal W_{1,0}$) were conjectured by V. Kac and classified by O. Mathieu in \cite{Ma}, see also \cite{Su2}. Y. Billig and V. Futorny completed the classification for $\mathcal W_{m,0}$ in \cite{BF1}. Simple Harish-Chandra modules over $W_{0,n}$ were classified in \cite{DMP}. Simple strong Harish-Chandra modules over the $N=2$ Ramond algebra (which is a central extension of $\mathcal W_{1,1}$) were classified in \cite{Liu1}. Such modules over $\mathcal W_{m,n}$ were classified independently in \cite{BFIK} and \cite{XL2}. For more related results, we refer the readers to \cite{ BF2, BL,CLL, E1, E2, LZ2, MZ, Sh, Su1, Su2} and the references therein.

In this paper, we will classify simple strong Harish-Chandra modules over $W_{m,n}$. The classification of Harish-Chandra modules over $W_{1,0}$ were given in \cite{Ma}. Very recently, based on the classification of simple bounded $W_{m,0}$ modules (see \cite{CG} for $m=2$ and \cite{XL1} for general $m$) and the characterization of the support of simple weight $W_{m,0}$ modules (see \cite{PS}),  D. Grantcharov and V. Serganova complete the classification of simple Harish-Chandra modules over $W_{m,0}$ (see \cite{GS}). In \cite{LX}, simple bounded weight $W_{m,n}$ modules were classified.

\begin{comment}
\begin{theorem}\label{Main}Any strong Harish-Chandra module is a simple quotient of a tensor module $F(P,V)$ for a simple weight module $P$ over the Weyl superalgebra $\mathcal K_{m,n}$ and a simple strong Harish-Chandra module $V$ over the general linear superalgebra $\gl(m,n)$.\end{theorem}
\end{comment}

This paper is arranged as follows. In Section 2, we give some definitions, known results and preliminaries. In Section 3, we prove that any simple strong Harish-Chandra module $W_{m,n}$ modules is parabolic induced from a strong bounded module over a subalgebra  $\hat{\mathfrak{k}}=W_{q,n}\ltimes(\mathfrak{k}\otimes A_{q,n})$. In Section 4, simple bounded modules over $\hat{\mathfrak{k}}$ are classified. Finally, in Section 5, main theorem in this paper is given.

\section{Preliminaries}

A vector space $V$ is called a superspace if $V$ is endowed with a $\Z_2$-gradation $V=V_{\bar 0}\oplus V_{\bar 1}$. For any homogeneous element $v\in V$, let $|v|\in\Z_2$ with $v\in V_{|v|}$. Throughout this paper, $v$ is always assumed to be a homogeneous element whenever we write $|v|$ for a vector $v\in V$.

\iffalse Any subalgebra (resp. ideal) of a Lie superalgebra or an associative superalgebra is assumed to be $\Z_2$-graded. \fi

A module over a Lie superalgebra or an associative superalgebra is simple if it does not have nontrivial $\Z_2$-graded submodules. A module $M$ over a Lie superalgebra or an associative superalgebra $\g$ is called strictly simple if $M$ does not have $\g$-invariant subspaces except $0$ and $M$. Clearly, a strictly simple module must be simple. Denote by $\Pi(M)$ the parity-change of $M$ for a module $M$ over a Lie superalgebra or an associative superalgebra.

For any homomorphism of Lie superalgebras or associative superalgebras $\theta: B\to B'$ and any $B'$ module $M$, $M$ becomes a $B$ module, denoted by $M^\theta$, via $b\cdot m:=\theta(b)m$.

\begin{lemma}\cite[Lemma 2.2]{XL2}\label{density}
Let $B,B'$ be two unital associative superalgebras such that $B'$ has a countable basis, $R=B\otimes B'$.
\begin{enumerate}
  \item Let $M$ be a $B$-module and $M'$ be a strictly simple $B'$-module. Then $M\otimes M'$ is a simple $R$-module if and only if $M$ is simple.
  \item Suppose that $V$ is a simple $R$-module and $V$ contains a strictly simple $B'=\C\otimes B'$-submodule $M'$. Then $V\cong M\otimes M'$ for some simple $B$-module $M$.
\end{enumerate}
\end{lemma}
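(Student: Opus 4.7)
The plan is to adapt the standard ``isotypic component = multiplicity $\otimes$ type'' decomposition, via the Jacobson density theorem, to the $\Z_2$-graded setting. The key preparation will be a Schur-type statement: since $M'$ is strictly simple, every nonzero $B'$-equivariant endomorphism $T:M'\to M'$ has $\ker T$ and $\im T$ both $B'$-invariant, hence $T$ is either $0$ or an automorphism; combined with the fact that $M'$ is cyclic (and so of at most countable dimension over the uncountable algebraically closed field $\C$), a standard argument then forces $\mathrm{End}_{B'}(M')=\C$. The classical Jacobson density theorem supplies the crucial separation property: for any linearly independent $v_1,\dots,v_k\in M'$ and any targets $w_1,\dots,w_k\in M'$, there exists $b'\in B'$ with $b'v_i=w_i$ for all $i$.

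For part (1), the ``only if'' direction is immediate, since a proper nonzero graded $B$-submodule $N\subsetneq M$ yields $N\otimes M'$ as a proper nonzero graded $R$-submodule of $M\otimes M'$. For the converse, I take $0\neq w=\sum_{i=1}^k m_i\otimes v_i$ in $M\otimes M'$ with the $v_i$ linearly independent and apply the separation property to find $b'\in B'$ with $b'v_1=v_1$ and $b'v_i=0$ for $i\geq 2$, giving $(1\otimes b')w=m_1\otimes v_1$ in the cyclic $R$-submodule generated by $w$. Acting further by $1\otimes B'$ (using strict simplicity of $M'$) sweeps $v_1$ through all of $M'$, and acting by $B\otimes 1$ (using simplicity of $M$) sweeps $m_1$ through all of $M$, yielding $M\otimes M'\subseteq\langle w\rangle_R$.

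For part (2), I will form the multiplicity space $M:=\mathrm{Hom}_{B'}(M',V)$, equipped with a $B$-module structure via the commuting $B$-action on $V$. The evaluation map $\mu:M\otimes M'\to V$, $\phi\otimes v'\mapsto\phi(v')$, is a nonzero $R$-homomorphism (the inclusion $M'\hookrightarrow V$ is a nonzero element of $M$), hence surjective by simplicity of $V$. For injectivity, take $\sum_{i=1}^k\phi_i\otimes v_i\in\ker\mu$ with the $v_i$ linearly independent; by density, choose $b'\in B'$ with $b'v_1=v$ arbitrary and $b'v_i=0$ for $i\geq 2$, so that $(1\otimes b')(\sum_i\phi_i\otimes v_i)=\phi_1\otimes v\in\ker\mu$, forcing $\phi_1(v)=0$ for every $v\in M'$ and hence $\phi_1=0$; iteration kills the whole sum. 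Simplicity of $M$ then follows from part (1) applied in reverse.

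The main obstacle I anticipate is the careful $\Z_2$-graded bookkeeping throughout: one must ensure that the element $b'$ produced by density can be chosen homogeneous of a prescribed parity so that $(1\otimes b')w$ stays inside the relevant graded $R$-submodule, and one must confirm that $\mathrm{End}_{B'}(M')=\C$ genuinely uses the strictly simple hypothesis --- were $M'$ only simple in the $\Z_2$-graded sense, odd $B'$-equivariant endomorphisms squaring to a nonzero scalar could enlarge the endomorphism ring into a nontrivial super division algebra, invalidating the density argument and the multiplicity decomposition above.
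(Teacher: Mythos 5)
Your proposal is correct, and since the paper quotes this lemma from \cite[Lemma 2.2]{XL2} without proof, the comparison is with the standard argument of that reference, which is exactly the one you give: strict simplicity together with the countable basis of $B'$ force $\mathrm{End}_{B'}(M')=\C$ (Dixmier--Schur over the uncountable field $\C$), and the classical ungraded Jacobson density theorem then yields both the simplicity of $M\otimes M'$ and the isomorphism $V\cong\mathrm{Hom}_{B'}(M',V)\otimes M'$ via evaluation. The graded issue you flag is only bookkeeping: a graded submodule contains the homogeneous components of its elements, so one may take $w$ and the $v_i$ homogeneous, after which the even component of the $b'$ supplied by density already realizes the required partial map and no signs intervene.
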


\subsection{Basic definitions.}

We omit $\otimes$ in $A_{m,n}$ for convenience. For any $\alpha=(\alpha_1,\dots,\alpha_m)\in\Z_+^m$ and $i_1,\dots,i_k\in\overline{1,n}$, write $t^\alpha:=t_1^{\alpha_1}\cdots t_m^{\alpha_m}$ and $\xi_{i_1,\dots,i_k}:=\xi_{i_1}\cdots \xi_{i_k}$. Also, for any subset $I=\{i_1,\dots,i_k\} \subset \overline{1,n}$, write $\underline{I}=(l_1,\dots,l_k)$ if $\{l_1,\dots,l_k\}=\{i_1,\dots,i_k\}$ and $l_1<\dots<l_k$. Denote $\xi_I:=\xi_{l_1,\dots,l_k}$ and set $\xi_\varnothing=1$.

Let $i_1,\dots,i_k$ be a sequence in $\overline{1,n}$. Denote by $\tau(i_1,\dots,i_k)$ the inverse order of the sequence $i_1,\dots,i_k$. Let $I,J\subset \overline{1,n}$ with $I\cap J=\varnothing$ and $\underline{I}=(k_1,\dots,k_p),\underline J=(l_1,\dots,l_q)$. We write $\tau(I,J)=(k_1,\dots,k_p,l_1,\dots,l_q)$. Set $\tau(\varnothing,\varnothing)=0$. Then $\xi_{I\cup J}=(-1)^{\tau(I,J)}\xi_I\xi_J$ for all $I\cap J=\varnothing$.

$W_{m,n}$ has a standard basis
$$\{t^\alpha\xi_I\partial_i\ |\ \alpha\in\Z_+^m;I\subset\overline{1,n};i\in\overline{1,m+n}\},$$
where $\partial_i:=\frac{\partial}{\partial t_i}$.

Define the extended Witt superalgebra $\tilde{W}_{m,n}=W_{m,n}\ltimes A_{m,n}$ by
\begin{equation}
[a,a']=0,[x,a]=-(-1)^{|x||a|}[a,x]=x(a),\forall x\in W_{m,n},a,a'\in A_{m,n}.
\end{equation}

Write $d_i:=t_i\partial_i$ for any $i\in\overline{1,m+n}$.  Let $\h_m=\span \{d_i \,|\, i\in\overline{1,m}\}$ and $\fH_{m,n}=\span \{d_i\,|\,i\in\overline{1,m+n}\}$ be the standard Cartan subalgebra of $W_{m,0}$ and $W_{m,n}$ respectively. Let $\epsilon_1,\ldots,\epsilon_{m}$ be the basis in $\h_m^*$  dual to the basis  $d_1,\ldots,d_m$ in $\h_m$.

For the rest of this paper, we fix $m,n\in\mathbb{Z}_+$ with $(m,n)\neq(0,0)$ and write $A:=A_{m,n}$, $W:=W_{m,n}, \mathcal{K}:=\mathcal{K}_{m,n}, \tilde{W}:=\tilde{W}_{m,n}, \gl:=\gl_{m,n}, \h:=\h_{m}, \fH:=\fH_{m,n}$.

Let $\g$ be any Lie super-subalgebra of $\tilde W$ that contains $\h$ and let $M$ be a $\g$ module. In this paper, $M$ is called a weight module with respect to $\h$ if the action of $\h$ on $M$ is diagonalizable. Namely, $M$ is a weight module with respect to $\h$ if $M=\oplus_{\lambda\in \h^*}M_{\lambda}$, where
$$M_{\lambda}=\{v\in M\ |h v=\lambda(h)v,\forall h\in \h, v\in M\}$$
is called the weight space with weight $\lambda$.
Denote by
$$\supp_\h(M)=\{\lambda\in \h^*\ |\ M_{\lambda}\neq 0\}$$
the $\h$-support set of $M$. Denote by $\mathscr{W}_\h(\g)$ the category of weight $\g$ modules with respect to $\h$, and let $\mathscr{F}_{\h}(\g)$ ($\mathscr{B}_\h(\g)$, resp.) be the full subcategory consisting of modules whose weight spaces are finite dimensional (have uniformly bounded dimensions, resp.). Similarly, one can define $\mathscr{W}_\fH$, $\fH$-support, $\mathscr{F}_\fH$ and $\mathscr{B}_\fH$. Modules in $\mathscr{F}_\h(\g)$ and $\mathscr{B}_\h(\g)$ are called strong Harish-Chandra modules and strong bounded modules, if $\fH\subseteq\g$. It is easy to see that simple modules in $\mathscr{F}_\h(\g)$ are contained in $\mathscr{F}_\fH(\g)$. In the rest of this paper, weight modules means modules in $\mathscr{W}_\h$ and support means $\h$-support.

\subsection{Parabolic induction.}
Let us recall some results on parabolic induction. Let  $\Delta(\g)$ be the root set of $\g$ with respect to $\h$, that is $\Delta(\g)\sqcup\{0\}$ is the support of the adjoint module $\g$.

Recall that a subdivision $\Delta(\g)=\Delta(\g)^+\sqcup\Delta(\g)^-$ is called a \emph{triangular decomposition} if and only if $\langle-\Delta(\g)^+\cup\Delta(\g)^-\rangle_{\R_+}\cap\langle-\Delta(\g)^-\cup\Delta(\g)^+\rangle_{\R_+}=\{0\}$. And a \emph{parabolic decomposition} (or \emph{generalized triangular decomposition}) of $\Delta(\g)$ is a subdivision $\Delta(\g)=\Delta(\g)^+\sqcup\Delta(\g)^0\sqcup\Delta(\g)^-$ such that
$p(\Delta(\g)^+)\cap p(\Delta(\g)^-)=\emptyset, 0\notin p(\Delta(\g)^\pm)$, and $p(\Delta(\g)\setminus\Delta(\g)^0)=p(\Delta(\g)^+)\sqcup p(\Delta(\g)^-)$ is a triangular decomposition of $p(\Delta(\g)\setminus\Delta(\g)^0),$ where $p$ is the natural projection $\langle\Delta(\g)\rangle_\R\to\langle\Delta(\g)\rangle_\R/\langle\Delta(\g)^0\rangle_\R$.

If $\Delta(\g)=\Delta(\g)^+\sqcup\Delta(\g)^0\sqcup\Delta(\g)^-$ is a parabolic decompostion, set
\[
\g^0:=\h\oplus(\oplus_{\alpha\in\Delta(\g)^0}\g_\alpha), \, \g^\pm:=\oplus_{\alpha\in\Delta(\g)^\pm}\g_\alpha.
\]
A subalgebra $\mathfrak{p}$ of $\g$ is called \emph{parabolic} if $\mathfrak{p}=\g^0\oplus\g^+$ for some parabolic decomposition.

The following lemma is a superverion of Lemma 2 in \cite{PS}.
\begin{lemma}\label{unique-quotient}
Let $\mathfrak{p}$ be a parabolic subalgebra of $\g$ with a given parabolic decomposition $\Delta(\g)=\Delta(\g)^+\sqcup\Delta(\g)^0\sqcup\Delta(\g)^-$. Let $M$ be a simple weight $\g$ module and $\lambda\in\supp(M)$ such that $\lambda+\alpha\notin\supp(M)$ for any $\alpha\in\Delta(\g)^+$. Then $M$ is the unique quotient of the induced $\g$ module $\mathrm{Ind}_{\mathfrak{p}}^{\g}M^\mathfrak{p}$, where $M^{\mathfrak{p}}:=U(\mathfrak{p})M_\lambda$ is a simple $\mathfrak{p}$ module.
\end{lemma}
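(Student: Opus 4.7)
The strategy follows the standard generalized Verma module construction, adapted to the super setting. First, I would note that the hypothesis $\lambda+\alpha\notin\supp(M)$ for $\alpha\in\Delta(\g)^+$ gives $\g^+M_\lambda=0$, so $M^{\mathfrak{p}}=U(\mathfrak{p})M_\lambda=U(\g^0)M_\lambda$, a $\mathfrak{p}$-submodule of $M$ supported in weights $\lambda+\langle\Delta(\g)^0\rangle_\Z$. Since $M^{\mathfrak{p}}\subseteq M$ we have $M^{\mathfrak{p}}_\lambda\subseteq M_\lambda$, and the reverse inclusion is by construction, so $M^{\mathfrak{p}}_\lambda=M_\lambda$.

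The main step is to prove simplicity of $M^{\mathfrak{p}}$ as a $\mathfrak{p}$-module. I would take a nonzero $\mathfrak{p}$-submodule $N\subseteq M^{\mathfrak{p}}$ and consider $U(\g)N\subseteq M$. By PBW for Lie superalgebras, $U(\g)=U(\g^-)U(\mathfrak{p})$, and $\mathfrak{p}$-stability of $N$ gives $U(\g)N=U(\g^-)N$; since $M$ is simple and $N\neq 0$, this equals $M$. To conclude $N_\lambda=M_\lambda$, I would analyze weights: any weight-$\lambda$ contribution in $U(\g^-)N$ comes from $xv$ with $x$ a PBW monomial of weight $\beta\in\langle\Delta(\g)^-\rangle_{\Z_+}$ and $v\in N_\mu$ with $\mu-\lambda\in\langle\Delta(\g)^0\rangle_\Z$, forcing $p(\beta)=0$. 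The triangular decomposition axiom on $p(\Delta(\g)\setminus\Delta(\g)^0)$ makes $\langle p(\Delta(\g)^-)\rangle_{\R_+}$ a pointed cone and excludes $0$ from $p(\Delta(\g)^-)$, so $\beta=0$ and $x$ must be scalar. Hence $M_\lambda=N_\lambda\subseteq N$, so $N\supseteq U(\mathfrak{p})M_\lambda=M^{\mathfrak{p}}$.

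For the final claim, Frobenius reciprocity applied to the inclusion $M^{\mathfrak{p}}\hookrightarrow M$ yields a surjection $\mathrm{Ind}_{\mathfrak{p}}^{\g}M^{\mathfrak{p}}\twoheadrightarrow M$. Any proper $\g$-submodule of $\mathrm{Ind}_{\mathfrak{p}}^{\g}M^{\mathfrak{p}}$ intersects $1\otimes M^{\mathfrak{p}}$ trivially, because otherwise simplicity of $M^{\mathfrak{p}}$ as a $\mathfrak{p}$-module (via the $\mathfrak{p}$-equivariant projection onto the generating space) forces it to contain $1\otimes M^{\mathfrak{p}}$, hence to be everything. The sum of all such submodules is then the unique maximal proper submodule, and the quotient is $M$.

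The main obstacle is the weight-bookkeeping in the middle paragraph: one has to extract pointedness of $\langle p(\Delta(\g)^-)\rangle_{\R_+}$ from the slightly indirect definition of a parabolic decomposition, and then convert this into rigidity of PBW monomials that land in weight $\lambda$. The super setting introduces no essential difficulty, since $\h$ and its dual are purely even, weights behave classically, and the super PBW theorem supplies the required factorization $U(\g)=U(\g^-)U(\mathfrak{p})$.
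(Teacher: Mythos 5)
Your argument is correct, and since the paper gives no proof of this lemma at all (it simply declares it a superversion of \cite[Lemma 2]{PS}), your write-up supplies exactly the argument that citation stands for: $\g^+M_\lambda=0$ and $U(\g^+)M_\lambda=M_\lambda$, the factorization $U(\g)=U(\g^-)U(\mathfrak{p})$, and pointedness of the cone $\langle p(\Delta(\g)^-)\rangle_{\R_+}$ (which, as you say, does follow from the triangular-decomposition axiom together with $0\notin p(\Delta(\g)^\pm)$) to force any PBW monomial from $U(\g^-)$ carrying $N$ back into the coset $\lambda+\langle\Delta(\g)^0\rangle_\Z$ to be scalar; this gives $N_\lambda=M_\lambda$ and hence simplicity of $M^{\mathfrak{p}}$, and Frobenius reciprocity gives the surjection. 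The one place that deserves an extra line is the last step: from ``every proper submodule meets $1\otimes M^{\mathfrak{p}}$ trivially'' it does not formally follow that the sum of all proper submodules is again proper. The fix is the same cone computation applied to the induced module itself: $\big(\mathrm{Ind}_{\mathfrak{p}}^{\g}M^{\mathfrak{p}}\big)_\lambda=1\otimes M_\lambda$, since a weight of $U(\g^-)$ landing in $\langle\Delta(\g)^0\rangle_\Z$ must vanish; hence any submodule meeting $1\otimes M^{\mathfrak{p}}$ trivially has zero $\lambda$-weight space, so the sum of all proper submodules also misses $\lambda$ and is therefore proper. With that observation added, the proof is complete; the super setting indeed causes no extra trouble since $\h$ is even and the super PBW theorem applies.
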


\subsection{Modules over $\mathfrak{sl}_2$ and $\mathcal{K}_{m,n}$.}
Now let us recall some results on $\mathfrak{sl}_2$ modules and $\mathcal{K}_{m,n}$ modules. The following result on $\mathfrak{sl}_2$ is well known.
\begin{lemma}\label{sl2}
Let $e,h,f$ be the standard basis of $\sl_2$ and $V\in\mathscr{F}_{\mathbb{C}h}(\mathfrak{sl}_2)$ and $\supp(V)\subseteq \lambda+2\Z$.
\begin{enumerate}
\item If both $e$ and $f$ acts injectively on $V$, then $\supp(V)=\lambda+2\Z$ and $\dim V_\lambda=\dim V_{\lambda+2k}, \forall k\in\Z$.
\item If $e$ acts locally nilpotently on $V$, then $\supp(V)$ is bounded from above.
\end{enumerate}
\end{lemma}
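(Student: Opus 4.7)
The plan for (1) is standard dimension-counting using injectivity; for (2) I argue by contradiction, using local nilpotency of $e$ to produce infinitely many linearly independent vectors in one weight space.

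For (1), since $e$ raises weights by $2$, its injectivity restricts to an injection $V_\mu\hookrightarrow V_{\mu+2}$ on each weight space, giving $\dim V_\mu\leq\dim V_{\mu+2}$; injectivity of $f$ gives the reverse inequality, so all weight spaces along $\lambda+2\Z$ share a common dimension. If $V\neq 0$ and $v\in V_{\mu_0}$ is nonzero, then $ev$ and $fv$ are nonzero by injectivity, so by induction $V_{\mu_0+2k}\neq 0$ for every $k\in\Z$, whence $\supp(V)=\mu_0+2\Z=\lambda+2\Z$.

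For (2), suppose for contradiction that $\supp(V)$ is unbounded above. Since $e$ acts locally nilpotently, iterating $e$ on any nonzero weight vector produces a highest weight vector of weight at least the original, so $\ker e$ also has unbounded support above. Pick highest weight vectors $w_j$ with strictly increasing weights $\nu_j\to\infty$ in $\lambda+2\Z$, fix $\mu_0\in\supp(V)$, and set $l_j:=(\nu_j-\mu_0)/2$, a non-negative integer for $j$ large. The goal is to show the nonzero vectors $f^{l_j}w_j\in V_{\mu_0}$ form a linearly independent family, contradicting $\dim V_{\mu_0}<\infty$.

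Linear independence rests on the standard $\sl_2$-identities for a highest weight vector $w$ of weight $\nu$, namely $e^af^bw=0$ when $a>b$ and $e^bf^bw=b!\prod_{i=0}^{b-1}(\nu-i)\,w$. In a nontrivial relation $\sum c_jf^{l_j}w_j=0$, let $j^*$ be the largest index with $c_{j^*}\neq 0$; applying $e^{l_{j^*}}$ annihilates each term with $l_j<l_{j^*}$ and leaves $c_{j^*}\cdot l_{j^*}!\prod_{i=0}^{l_{j^*}-1}(\nu_{j^*}-i)\,w_{j^*}$. The main delicate point is the nonvanishing of this scalar: it fails precisely when $\nu_{j^*}\in\{0,1,\ldots,l_{j^*}-1\}$, which forces $\nu_{j^*}\leq-\mu_0-2$, and this is impossible once $\nu_j\to\infty$ is sufficiently large. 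Hence $c_{j^*}=0$, a contradiction, completing the proof.
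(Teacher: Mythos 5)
Your proof is correct and takes essentially the same route as the paper's: part (1) via the two injections between adjacent weight spaces, and part (2) by contradiction, extracting singular vectors of arbitrarily high weight and pushing them down by powers of $f$ into one fixed weight space to violate finite-dimensionality. Your explicit linear-independence computation with $e^{l_{j^*}}$ merely fills in the step the paper leaves implicit when it concludes ``therefore $\dim V_\lambda=\infty$''.
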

\begin{proof}
The first statement is clear. For the second statement, to the contrary, we have $\lambda+2\Z_+\subseteq\supp(V)$. For any $n>0$, there exists $k\geq n$ and $0\neq v^{(n)}\in V_{\lambda+2k}$ such that $ev^{(n)}=0$. The for all sufficiently large $n$, we have $\lambda\in\supp(U(\mathfrak{sl}_2)v^{(n)})$. Therefore $\dim V_\lambda=\infty$, a contradiction.
\end{proof}

The following properties on $\mathcal{K}$ modules are given in \cite{LX}.

\begin{lemma}\label{Ksimple}
\begin{enumerate}
\item Any simple $\mathcal K$ module is strictly simple.
\item Let $P\in\mathscr{W}_\fH(\mathcal{K})$ be simple. Then $P\cong V_1\otimes \cdots \otimes V_m\otimes \C[\xi_1]\otimes\cdots\otimes \C[\xi_n]$, where every $V_i$ is one of the following simple weight $\C[t_i,\frac{\partial}{\partial t_i}]$-modules: $$t_i^{\lambda_1}\C[t_i^{\pm 1}],\C[t_i],\C[t_i^{\pm 1}]/\C[t_i],$$ where $\lambda_i\in \C\setminus \Z$.
\item Any weight $\mathcal{K}$ module  must have a simple submodule.
\end{enumerate}
\end{lemma}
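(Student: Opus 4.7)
The plan is to exploit the tensor product decomposition $\mathcal K_{m,n}\cong\mathcal K_{1,0}^{\otimes m}\otimes\mathcal K_{0,1}^{\otimes n}$ and apply the density result (Lemma \ref{density}) inductively, combined with standard facts about the rank one factors. The rank one even factor $\mathcal K_{1,0}=\C[t,\partial_t]$ is the first Weyl algebra, whose simple weight modules are exhausted by $t^{\lambda}\C[t^{\pm1}]$ with $\lambda\in\C\setminus\Z$, $\C[t]$, and $\C[t^{\pm1}]/\C[t]$. The rank one odd factor $\mathcal K_{0,1}$ is four-dimensional and isomorphic as a superalgebra to $\mathrm{End}(\Lambda(\xi))$ with $\Lambda(\xi)=\C[\xi]$; in particular it acts faithfully and irreducibly on $\C[\xi]$, so $\C[\xi]$ is its unique strictly simple module up to parity shift.

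For part (1), I would first note that $\C[\xi]$ is strictly simple over $\mathcal K_{0,1}$ because a full matrix algebra admits no proper invariant subspaces in its defining representation, while $\mathcal K_{m,0}$ is purely even, forcing any $\Z_2$-graded simple $\mathcal K_{m,0}$-module to sit in a single parity, in which case simplicity and strict simplicity coincide. With these two base cases in hand, one applies Lemma \ref{density}(1) with $B=\mathcal K_{m,0}\otimes\mathcal K_{0,1}^{\otimes(n-1)}$ and $B'=\mathcal K_{0,1}$: tensoring a simple $B$-module with the strictly simple module $\C[\xi_n]$ produces a module whose invariant subspaces are transparently determined by their projections onto each factor, yielding strict simplicity of the product. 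Iterating over all odd factors establishes the statement.

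For part (2), granting (1), I apply Lemma \ref{density}(2) inductively. Given a simple weight $\mathcal K$-module $P$, the $\mathcal K_{0,1}$-submodule generated by any weight vector annihilated by $\partial_{\xi_n}$ is strictly simple (isomorphic to $\C[\xi_n]$), so the hypothesis of Lemma \ref{density}(2) is met and $P\cong P'\otimes\C[\xi_n]$ for some simple weight $\mathcal K_{m,n-1}$-module $P'$. Repeating on the other odd factors and then on each even factor, where the simple weight $\C[t_i,\partial_i]$-modules are known to be among the three listed families by a classical result of Block, produces the asserted tensor product decomposition.

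For part (3), the task is to locate a simple weight submodule inside an arbitrary weight $\mathcal K$-module $M$. Fix $\mu\in\supp_{\fH}(M)$ and a nonzero $v\in M_\mu$, and pass to the cyclic submodule $\mathcal K v$, which remains a weight module. Each generator of $\mathcal K$ shifts an $\fH$-weight by an element of the finite set $\{\pm\epsilon_1,\ldots,\pm\epsilon_{m+n}\}$, so one can argue factor by factor that some translate $w$ of $v$ generates a submodule whose support factors as $\mu'+\prod_i S_i\times\{0,1\}^n$ with each $S_i\in\{\Z,\Z_+,\Z_{<0},\lambda_i+\Z\}$, at which point the classification from part (2) forces $\mathcal K w$ to be simple. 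The main obstacle is the production of $w$: one must combine a Zorn-type argument on chains of weight submodules with a parity-and-support analysis to rule out strictly descending chains without a minimum. This parallels the classical argument in the purely even Weyl algebra case, but requires additional bookkeeping to track the even and odd factors simultaneously so that the resulting cyclic module actually realises one of the tensor product models produced in part (2).
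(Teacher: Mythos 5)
The paper gives no proof of this lemma at all — it is quoted verbatim from \cite{LX} — so your proposal has to stand on its own. Parts (1) and (2) follow the standard and essentially correct route: factor $\mathcal K_{m,n}$ as $\mathcal K_{m,0}\otimes\mathcal K_{0,n}$, observe that the odd factor is a finite-dimensional matrix superalgebra whose defining module $\Lambda(n)$ is strictly simple, and combine Lemma \ref{density} with Block's classification of simple weight modules over the first Weyl algebra. Two points need tightening. First, in (1) you only show that tensoring a simple module with $\C[\xi_n]$ yields a (strictly) simple module; to conclude that an \emph{arbitrary} simple $\mathcal K$-module is strictly simple you must first show it \emph{is} such a tensor product, which requires Lemma \ref{density}(2) (every module over $\mathcal K_{0,n}\cong\mathrm{End}(\Lambda(n))$ is semisimple with strictly simple constituents), and note that Lemma \ref{density}(1) as stated delivers simplicity, not strict simplicity — for the latter you need both factors strictly simple plus a Jacobson-density argument. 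Second, the inductive step of (2) over the even variables needs the existence of a strictly simple weight $\C[t_i,\partial_i]$-submodule inside a simple weight $\mathcal K_{m,0}$-module; this is a rank-one instance of part (3) and cannot be waved through.

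The genuine gap is part (3), and you flag it yourself: "the production of $w$" is the entire content of the statement, and the "Zorn-type argument on chains of weight submodules" does not work as described — descending chains of nonzero weight submodules need not have nonzero intersection, and Zorn produces maximal, not minimal, elements. The missing idea is a finiteness input: for a weight vector $v$ of weight $\mu$, the cyclic module $\mathcal Kv$ has all weight spaces of dimension at most one, because the zero-weight subalgebra $\mathcal K^0=\C[d_1,\dots,d_{m+n}]$ acts on $M_\mu$ by the scalars $\mu(d_i)$ and each weight component $\mathcal K^\alpha$ is cyclic over $\mathcal K^0$, so $(\mathcal Kv)_{\mu+\alpha}$ is spanned by a single monomial applied to $v$. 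With one-dimensional weight spaces the structure of $\mathcal Kv$ becomes purely combinatorial: in each even variable either $\partial_i$ (resp.\ $t_i$) annihilates some vector of the weight string, producing a copy of $\C[t_i]$ or $\C[t_i^{\pm1}]/\C[t_i]$, or both act bijectively and the string is $t_i^{\mu_i}\C[t_i^{\pm1}]$ with $\mu_i\notin\Z$; choosing $w$ at a common corner of these strings yields a submodule matching one of the models of part (2), hence simple. Without this observation the reduction you describe does not go through.
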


\subsection{Simple $\mathfrak{gl}_{m,n}$ modules.}
Let $\gl_{m,n}=\gl(\C^{m|n})$ be the general linear Lie superalgebra realized as the spaces of all $(m+n)\times(m+n)$ matrices. Denote by $E_{i,j}, i,j\in\overline{1,m+n}$ be the $(i,j)$-th matrix unit.
%\begin{eqnarray*}
%&\gl(m,n)_{\bar 0}=\span\{E_{i,j}\ |\ i,j\in\{1,\dots,m\}\ or\ i,j\in\{m+1,\dots,m+n\}\},\\
%&\gl(m,n)_{\bar 1}=\span\{E_{i,m+j},E_{m+j,i}\ |\ i\in\{1,\dots,m\},j\in\{1,\dots,n\}\}.
%\end{eqnarray*}
%Set Lie algebra $\gl_0=0$ and Lie superalgebras $\gl(0,n)=\gl_n,\gl(m,0)=\gl_m$.
$\gl$ has a $\Z$-gradation $\gl=\gl^{-1}\oplus\gl^0\oplus\gl^1$, where
\begin{eqnarray*}
&\gl^{-1}=\span\{E_{m+j,i}\ |\ i\in\overline{1,m},j\in\overline{1,n}\},\\
&\gl^1=\span\{E_{i,m+j}\ |\ i\in\overline{1,m},j\in\overline{1,n}\}
\end{eqnarray*}
and $\gl^0$ is the even part of $\gl$. Obviously, this $\Z$-gradation is consistent with the $\Z_2$-gradation of $\gl$.  %Similarly, we have $\mathscr{F}_{\h_{n,m}}(\gl), \mathscr{F}_{H_{n,m}}(\gl)$, $\mathscr{B}_{\h_{n,m}}(\gl)$ and $\mathscr{B}_{H_{n,m}}(\gl)$.
%A weight $\gl_{m,n}$-module is called if the dimensions of its weight spaces are uniformly bounded by a constant positive integer.

For any module $V$ over the Lie algebra $\gl^0$, $V$ could be viewed as modules over the Lie superalgebra $\gl^0$ with $V_{\bar 0}=V$.
Let $V$ be a $\gl^0$-module and extend $V$ trivially to a $\gl^0\oplus\gl^1$-module.
The \emph{Kac module} of $V$ is the induced module $K(V):=\rm {Ind}_{\gl^0\oplus\gl^1}^{\gl}(V)$. It's easy to see that $K(V)$ is isomorphic to $\Lambda(\gl^{-1})\otimes V$ as superspaces.

\begin{lemma}\cite[Theorem 4.1]{CM}\label{L(V)}
For any simple $\gl^0$ module $V$, the module $K(V)$ has a unique maximal submodule. The unique simple top of $K(V)$ is denoted $L(V)$. Any simple $\gl$ module is isomorphic to $L(V)$ for some simple $\gl^0$-module $V$ up to a parity-change.
\end{lemma}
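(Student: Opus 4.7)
My plan is to exploit the $\Z$-grading $\gl = \gl^{-1} \oplus \gl^0 \oplus \gl^1$: by the PBW theorem, $K(V) \cong \Lambda(\gl^{-1}) \otimes V$ as $\gl^0$-modules, graded so that $V$ occupies the top degree $0$ and $\Lambda^k(\gl^{-1}) \otimes V$ sits in degree $-k$. The simplicity of $V$ as a $\gl^0$-module is the sole input needed for the first assertion.

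For the uniqueness of the maximal submodule, I would show that every proper $\gl$-submodule $N \subseteq K(V)$ meets the top component $V$ trivially, for otherwise $N \cap V$ would be a nonzero $\gl^0$-submodule of the simple module $V$ and hence equal $V$ itself, forcing $N \supseteq U(\gl^{-1}) V = K(V)$. Consequently the sum $N^{\max}$ of all proper submodules still satisfies $N^{\max} \cap V = 0$ and is itself proper, yielding the unique maximal submodule, whose quotient is the simple top $L(V)$.

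For the classification of simples, let $M$ be a simple $\gl$-module and set $M^{\gl^1} := \{m \in M : \gl^1 m = 0\}$. The matrix-unit identity $E_{i,m+j} E_{k,m+l} = 0$ for $i,k \in \overline{1,m}$ shows that $\gl^1$ is super-abelian, so each $x \in \gl^1$ satisfies $x^2 = \tfrac{1}{2}[x,x] = 0$ and $U(\gl^1) = \Lambda(\gl^1)$ is finite-dimensional. Therefore $U(\gl^1) v$ is finite-dimensional for every $v \in M$, and Engel's theorem applied to the commuting nilpotent action of $\gl^1$ on $U(\gl^1) v$ produces a common zero, so $M^{\gl^1} \neq 0$ and is $\gl^0$-invariant. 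The next step is to argue that $M^{\gl^1}$ is itself a simple $\gl^0$-module: for any nonzero $\gl^0$-submodule $V \subseteq M^{\gl^1}$, the subspace $U(\gl^{-1}) V$ is a $\gl$-submodule of $M$ (using $[\gl^1, \gl^{-1}] \subseteq \gl^0$ and $\gl^1 V = 0$), so by simplicity it equals $M$, and a commutator argument with $\gl^1$ then forces $V = M^{\gl^1}$. With $V := M^{\gl^1}$ in hand, Frobenius reciprocity $\mathrm{Hom}_\gl(K(V), M) \cong \mathrm{Hom}_{\gl^0 \oplus \gl^1}(V, M)$ produces a surjection $K(V) \twoheadrightarrow M$, which factors through $L(V)$ and gives $M \cong L(V)$ up to a parity flip.

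The main obstacle is the commutator argument showing $V = M^{\gl^1}$ once $U(\gl^{-1}) V = M$. Concretely, for any $w \in M^{\gl^1} \cap U(\gl^{-1})_{>0} V$ one writes $w = \sum_i x_{i_1} \cdots x_{i_{k_i}} v_i$ with $v_i \in V$ and $k_i \geq 1$, and then iteratively applies the super-Leibniz rule $[y, x_{i_1} \cdots x_{i_k}]$ for carefully chosen $y \in \gl^1$ to peel off the $\gl^{-1}$-factors; annihilation by $\gl^1$ forces the resulting ``degree-zero'' contributions to vanish, yielding $w \in V$. Combined with $M^{\gl^1} \subseteq U(\gl^{-1}) V = M$, this gives $M^{\gl^1} = V$. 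I expect this filtration/commutator step to be the most delicate part of the proof; the final parity flip is a routine accommodation of the $\Z_2$-grading ambiguity intrinsic to the super setting.
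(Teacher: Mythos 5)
The paper itself supplies no proof of this lemma; it is quoted directly from Chen--Mazorchuk \cite{CM}. Your overall strategy (induced module $K(V)$ via Frobenius reciprocity, Engel argument for $M^{\gl^1}\ne 0$, $\gl^0$-invariance of $M^{\gl^1}$, and the observation that $\gl^{\pm 1}$ are odd abelian so $U(\gl^{\pm 1})=\Lambda(\gl^{\pm 1})$) is the right skeleton, but two steps as written have genuine gaps, both of which trace back to the same missing ingredient: the internal $\Z$-grading of $\gl$ coming from the central element $z=\sum_{i\le m}E_{ii}\in\gl^0$, which satisfies $[z,\gl^{\pm 1}]=\pm\gl^{\pm 1}$.

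\textbf{Gap in the uniqueness of the maximal submodule.} You correctly show that any proper submodule $N$ of $K(V)$ has $N\cap V=0$. But your next step, that the \emph{sum} $N^{\max}$ of all proper submodules also satisfies $N^{\max}\cap V=0$, does not follow: intersections are not additive, and in general a sum of subspaces each missing $V$ can still meet $V$. What saves the argument is that $V$ is simple, so by Schur's lemma $z$ acts on $V$ by a scalar $c$, making $K(V)$ a $\Z$-graded $\gl$-module with $K(V)_{c-k}=\Lambda^k(\gl^{-1})\otimes V$. Every $\gl$-submodule is then $z$-homogeneous; since its degree-$c$ component is $N\cap V=0$, every proper submodule actually lies in $\bigoplus_{k\ge 1}\Lambda^k(\gl^{-1})\otimes V$, and so does their sum. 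This is the step you elided.

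\textbf{Gap in the simplicity of $M^{\gl^1}$.} The ``commutator/peeling'' argument you sketch cannot work literally as stated, because the expression $w=\sum_i x_{i_1}\cdots x_{i_{k_i}}v_i$ is far from unique once $M=U(\gl^{-1})V$ is a proper quotient of $\Lambda(\gl^{-1})\otimes V$; there is no well-defined ``degree'' of $w$ to induct on, and applying a single $y\in\gl^1$ only lands you in the next filtration layer $F_{k-1}=\sum_{j\le k-1}\Lambda^j(\gl^{-1})V$ rather than isolating a nonzero ``leading term.'' Indeed, in the associated graded module $\mathrm{gr}\,M$ (a quotient of $K(V)$) one can have nonzero $\gl^1$-invariants in positive degree, so no contradiction arises at this level of generality. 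The standard repair again uses $z$: one first has to produce a \emph{simple} $\gl^0$-submodule $V\subseteq M^{\gl^1}$ (or at least a $z$-eigenvector $w\in M^{\gl^1}$); then $z$ acts on $V$ by a scalar $c$, the module $M=U(\gl^{-1})V$ becomes honestly $z$-graded with top degree $c$ and $M_c=V$, and since $\gl^1$ strictly raises $z$-degree one gets $M^{\gl^1}\subseteq M_c=V$ by looking at degrees. Note that even this route needs a further argument for why such an eigenvector (equivalently a simple $\gl^0$-submodule of $M^{\gl^1}$) exists, which your outline also leaves implicit; this is where the real work in \cite{CM} lies. As you yourself flagged, this is the delicate part, and the sketch does not yet amount to a proof.

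The Engel-type argument for $M^{\gl^1}\ne 0$, its $\gl^0$-invariance, the computation that $U(\gl^{-1})V$ is a $\gl$-submodule, and the final appeal to Frobenius reciprocity are all correct and well phrased.
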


Clearly, $L(V)\in\mathscr{W}_\h(\gl)$ ($\mathscr{W}_\fH(\gl)$, resp.) if and only if $V\in\mathscr{W}_\h(\gl^0)$ ($\mathscr{W}_\fH(\gl)$, resp.).

\subsection{Differential modules and simplicity of tensor modules}\label{Differential}

From \cite{LX}, we know that there is a Lie superalgebra homomorphism $\pi$ from $W$ to the tensor superalgebra $\mathcal{K}\otimes U(\gl)$ given by
\[
\pi(t^\alpha\xi_I\partial_i)=t^\alpha\xi_I\partial\otimes1+\sum\limits_{s=1}^{m+n}(-1)^{|t_i|(|I|-1)}\partial_s(t^\alpha\xi_I)\otimes E_{s,i}, \alpha\in\Z_+^m, I\subseteq\overline{1,n}, i\in\overline{1,m+n}.
\]
The induced homomorphism from $U(W)$ to $\mathcal{K}\otimes U(\gl)$ will be also denoted by $\pi$. Let $P$ be a $\mathcal{K}$ module and $M$ be a $\gl$ module. Then we have the tensor module $F(P,M)=(P\otimes M)^\pi$ over $W$.

Let $P'\in\mathscr{W}_{\fH_{n,m}}(\mathcal{K}_{n,m})$ be with $\lambda\in \supp(P')$, where $\fH_{n,m}$ is defined as $\fH$ in $\mathcal{K}_{m,n}$. $P'$ may be regard as a $\gl$ module via the monomorphism from $\gl$ to $\mathcal{K}_{n,m}$ defined by mapping $E_{ij}$ to $t_i'\frac{\partial}{\partial t_j'},\forall i,j\in\overline{1,m+n}$, where $(t_1',\ldots,t_{m+n}')=(\xi_1,\ldots,\xi_m,t_1,\ldots,t_n)$. Then the $E=\sum_{i=1}^{m+n}E_{ii}$ eigenspace  $P'[\lambda]:=\{v\in P'|E v=\lambda(E)v\}$ of $P'$ is a simple module in $\mathscr{W}_{\fH_{n,m}}(\gl)$, which will be called a fundamental $\gl$ module.

Note that a fundamental $\gl$ module $P'[\lambda]\in\mathscr{F}_\h(\gl)$ if and only if it is finite-dimensional if and only if up to a parity-change $P'\cong A$ or $P'\cong A^{\sigma}$, where $\sigma\in\mathrm{Aut}(\mathcal{K}_{m,n})$ defined by $\sigma(t_i)=\partial_i, \sigma(\partial_i)=(-1)^{|t_i|+1}t_i$.

Let $P\in\mathscr{W}_{\fH}(\mathcal{K}_{m,n}), P'\in\mathscr{W}_{\fH_{n,m}}(\mathcal{K}_{n,m})$ be simple. Let $$\diff=\sum_{i=1}^{m}\partial_i\otimes\xi_i -\sum_{i=1}^n\partial_{m+i}\otimes t_i\in \mathcal{K}_{m,n}\otimes \mathcal{K}_{n,m}.$$ Then it is easy to verify that $[\diff,\pi(x)]=0,\forall x\in W$. Therefore, $\diff$ is an endomorphism of $W$ module $F(P,P')$ with $\diff^2=0$.

\begin{lemma}\cite{XW}\label{quotient}Let $P\in\mathscr{W}_\fH(\mathcal{K}), M\in\mathscr{W}_{\fH_{n,m}}(\gl)$ be simple.
\begin{enumerate}
\item $F(P,M)$ is simple as $W$ module if $M$ is not isomorphic to a fundamental $\gl$ module.
\item Suppose $M=P'[\lambda]$ for simple weight $\mathcal{K}_{n,m}$ module $P'$.
\begin{enumerate}
\item $F(P,M)$ is not simple if $M$ is neither trivial nor isomorphic to ${\rm Str}_{m,n}, \Pi({\rm Str}_{m,n})$;
\item $\diff(F(P,M))$ is a simple $W$ module if  $M\not\cong {\rm Str}_{m,n},\Pi({\rm Str}_{m,n})$.
\item If $M$ is nontrivial, then  $F(P,M)$ has a unique simple submodule $\diff(F(P, P'[\lambda']))$ for some $\lambda'\in \supp(P')$ with $\lambda'(E)=\lambda(E)-1$.
\item If $M$ is trivial, then $F(P,M)$ is not simple if and only if $P\cong A$ or $\Pi(A)$.
\item If $M={\rm Str}_{m,n}$ or $\Pi({\rm Str}_{m,n})$, then $F(P,M)$ is simple if and only if $P=\sum_{s=1}^{m+n}\partial_sP$.
\end{enumerate}
Here, $\mathrm{Str}_{m,n}=\C$ is the $1$-dimensional $\gl$ module with $x\cdot 1=\mathrm{str}(x), \forall x\in\gl$ with $\mathrm{str}(x)$ the supertrace of $x$.
\end{enumerate}
\end{lemma}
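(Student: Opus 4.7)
The overall plan is to exploit the homomorphism $\pi: U(W)\to\mathcal{K}\otimes U(\gl)$ and analyze $F(P,M)=P\otimes M$ through the image $\pi(U(W))$. By Lemma \ref{Ksimple}(1), every simple $\mathcal{K}$-module is strictly simple, so Lemma \ref{density} reduces simplicity questions to showing that $\pi(U(W))$ acts on $M$ through an image that is large enough to force the irreducibility of $M$ as a module over that image. This puts the two parts of the lemma on opposite sides of a single dichotomy: when $M$ is not fundamental the image is ``as large as possible'' and density wins; when $M$ is fundamental an extra intertwiner $\diff$ survives and produces a proper submodule.

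For part (1), the plan is a density-type argument. By taking iterated commutators and products of $\pi(t^\alpha\xi_I\partial_i)$ and varying the multi-index $(\alpha,I)$, one extracts operators which, after subtracting the $\mathcal{K}\otimes 1$ part, look like $1\otimes X$ for arbitrary $X\in\gl$; combined with the pure $\mathcal{K}$-part one obtains a dense subalgebra of $\mathcal{K}\otimes U(\gl)$ acting on $M$. When $M=L(V)$ is simple and not a fundamental $\gl$-module, Lemma \ref{L(V)} ensures that $M$ remains irreducible under this image, so Lemma \ref{density} yields that $F(P,M)$ is simple.

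For part (2), the main tool is the endomorphism $\diff$ of $F(P,P')$ with $\diff^2=0$. Since $\diff$ raises the $E$-eigenvalue on the $P'$-factor by $1$, it restricts to a $W$-equivariant map $F(P,P'[\lambda'])\to F(P,P'[\lambda])$ whenever $\lambda(E)=\lambda'(E)+1$. For (2a), once $M=P'[\lambda]$ is neither trivial nor $\Str_{m,n}$-like, one uses the explicit description of simple weight $\mathcal{K}$-modules (Lemma \ref{Ksimple}(2)) to exhibit $\lambda'\in\supp(P')$ with $\lambda'(E)=\lambda(E)-1$ such that $\diff(F(P,P'[\lambda']))$ is a nonzero proper submodule, yielding non-simplicity. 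For (2b), the image of $\diff$ is again a tensor module of the form $F(P,N)$ for a simple quotient $N$, to which part (1) now applies. For (2c), uniqueness of the simple submodule follows by a standard argument: any nonzero submodule must meet the lowest $E$-eigenspace of $F(P,M)$, and a weight-support calculation forces this intersection to agree with $\im(\diff)$.

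The remaining boundary cases (2d) and (2e) require direct computation. If $M$ is trivial, then $F(P,\C)=P$ as a $W$-module via the first component of $\pi$; this action is simple unless $P$ admits a $W$-submodule under the restricted $W\subset\mathcal{K}$ action, and a direct check using Lemma \ref{Ksimple}(2) shows the only exceptions are $P\cong A$ or $\Pi(A)$, where the constants $\C\subset A$ furnish such a submodule. If $M=\Str_{m,n}$ or $\Pi(\Str_{m,n})$, then $\diff$ on $F(P,M)$ reduces essentially to the divergence $\sum_{s=1}^{m+n}\partial_s$ acting on the $P$-factor, so its cokernel is $P/\sum_{s}\partial_s P$ and simplicity is equivalent to $P=\sum_{s=1}^{m+n}\partial_s P$. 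The hardest step will be the density argument in part (1): one must verify carefully that the nested commutators of $\pi(t^\alpha\xi_I\partial_i)$ really generate a large enough subalgebra of $\mathcal{K}\otimes U(\gl)$, and pin down the supertrace obstruction that singles out exactly the fundamental $\gl$-modules as the locus where the density argument fails.
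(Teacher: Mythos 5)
First, note that the paper does not prove this lemma at all: it is quoted verbatim from \cite{XW}, so there is no in-paper argument to compare against. Judged on its own terms, your outline reproduces the correct global strategy (exploit $\pi$, strict simplicity of $P$ via Lemma \ref{Ksimple}(1) and Lemma \ref{density}, and the intertwiner $\diff$ as the source of non-simplicity in the fundamental case), but it has a genuine gap exactly where you yourself locate ``the hardest step,'' and the mechanism you propose there cannot work as described. You say that iterated commutators and products of the $\pi(t^\alpha\xi_I\partial_i)$ yield, ``after subtracting the $\mathcal{K}\otimes 1$ part,'' operators of the form $1\otimes X$ for arbitrary $X\in\gl$, hence a dense subalgebra of $\mathcal{K}\otimes U(\gl)$. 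But $\mathcal{K}\otimes 1$ is \emph{not} contained in $\pi(U(W))$ (e.g.\ $t_j\partial_i\otimes 1\notin\pi(U(W))$, only $t_j\partial_i\otimes 1+1\otimes E_{ji}=\pi(t_j\partial_i)$ is), so there is nothing to subtract; commutators of elements of $\pi(W)$ stay in the Lie subalgebra $\pi(W)$, and products stay in $\pi(U(W))$, which is a proper subalgebra of $\mathcal{K}\otimes U(\gl)$ that never contains $1\otimes E_{ij}$. Moreover, Lemma \ref{density} only applies once one knows a $W$-submodule of $P\otimes M$ is a $\mathcal{K}\otimes U(\gl)$-submodule; establishing precisely this (by an induction on weights/degrees of explicit elements, with the failure occurring exactly when $M$ is fundamental, because then the intertwiner $\diff$ exists) is the entire content of part (1), and your sketch presupposes it rather than proving it. The obstruction is the existence of $\diff$, not a ``supertrace obstruction.''

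A second, smaller problem is in your argument for (2c). You propose to prove uniqueness of the simple submodule by intersecting a nonzero submodule with ``the lowest $E$-eigenspace of $F(P,M)$.'' But $M=P'[\lambda]$ is by definition a single $E$-eigenspace of $P'$, so $E$ (acting through the $\gl$-factor) is constant on all of $F(P,M)$ and cannot stratify it; the map $\diff$ goes from the \emph{different} module $F(P,P'[\lambda'])$ into $F(P,M)$, it is not an endomorphism of $F(P,M)$ whose image sits in an extremal eigenspace. What is actually needed is to show that every nonzero $W$-submodule of $F(P,M)$ contains $\diff(F(P,P'[\lambda']))$, which is a computation of the same nature and difficulty as part (1). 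Your treatment of (2d) and (2e) is essentially right: for trivial $M$ one has $F(P,\C)\cong P|_W$ and the constants give the exceptional submodule when $P\cong A$ or $\Pi(A)$, and for $M=\mathrm{Str}_{m,n}$ the image of $\diff$ identifies with $\sum_s\partial_sP$ inside $P$, giving the stated criterion.
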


\section{Parabolic induction for $W_{m,n}$} \label{subsec-par-ind}
In this section, we will prove the parabolic induction theorem for $W$. The proof follows from the ideas in \cite{PS}, which is a slightly different. For the rest of this paper, we denote by $\Delta:=\Delta(W)$, that is
\begin{equation}\label{rootset}
\Delta= \left\{\sum_{j=1}^m s_j\epsilon_j,  -\epsilon_i+ \sum_{j\neq i} s_j\epsilon_j \mid s_j \in \mathbb Z_{+}, i\in\overline{1,m}\right\}\setminus\{0\}.
\end{equation}

Consider the embedding $\mathbb C^m\to \mathbb CP^m$. The Lie algebra of vector fields on  $\mathbb CP^m$ is isomorphic to $\mathfrak{sl}_{m+1}$  and is a Lie subalgebra of $W_{m,0}$.  In other words we have a canonical embedding of Lie algebras, $\mathfrak{sl}_{m+1}\subset W_{m,0}$ , with basis
    \[
    d_i,\, e_{\epsilon_i-\epsilon_j}=t_i\partial_j,
    e_{\epsilon_i}=-t_i\sum\limits_{j=1}^mt_j\partial_j,\, e_{-\epsilon_i}=\partial_i,\; i\neq j\in\overline{1,m}.
   \]
   Clearly, with respect to $\h$, $\mathfrak{sl}_{m+1}$ has root set $$\Delta'=\{\epsilon_i-\epsilon_j,\epsilon_i,-\epsilon_i| i\ne j\in\overline{1,m}\}.$$
Denote $\Delta''=\{\epsilon_i-\epsilon_j\,|\,i\ne j\in\overline{1,m}\}.$

%In this subsection we recall  one of the main results in \cite{PS}.

%Let us fix an $(m,n)\in \N\times \Z_+$, and denote $W=W_{m,n},
Denote $\g=\sl_{m+1}, W_m=W_{m,0}$ for short. Let $M\in\mathscr{F}_\h(W)$ be simple and nontrivial, $\lambda\in \supp(M)$, and $n_{\alpha}^{\lambda}:=\{q\in \mathbb{R}\,|\,\lambda+q\alpha\in \supp(M)\}$. Denote

\begin{equation}\begin{split}&\Delta_M^{'+}(\lambda):=\{\alpha\in \Delta'| n_{\alpha}^{\lambda} \,\,\mbox{is bounded only from above}\},\\
&\Delta_M^{'-}(\lambda):=\{\alpha\in \Delta'| n_{\alpha}^{\lambda}\,\, \mbox{is bounded only from below}\},\\
&\Delta_M^{'F}(\lambda):=\{\alpha\in \Delta'| n_{\alpha}^{\lambda}\,\, \mbox{is bounded in both directions}\},\\
& \Delta_M^{'I}(\lambda):=\{\alpha\in \Delta'| n_{\alpha}^{\lambda}\,\, \mbox{is unbounded in both directions}\}.\end{split} \end{equation}
Then
\begin{equation}\label{shadow}\Delta'=\Delta_M^{'+}(\lambda) \bigsqcup\Delta_M^{'-}(\lambda)\bigsqcup\Delta_M^{'F}(\lambda)\bigsqcup \Delta_M^{’I}(\lambda).\end{equation}

For the rest of this section, we fix $M$ and omit the $M$. Let $\Gamma_{\lambda}=\langle\alpha\in \Delta'\,|\,\lambda+\Z_+\alpha\subset \supp(M)\rangle_{\Z_+}$. %Then

\begin{lemma}\label{shadow-2}
 \begin{enumerate}
 \item If $\alpha\in \Delta'$, and $\lambda,\lambda+k\alpha\in \supp(M)$ for some $k\in \Z_+$, then $\lambda+j\alpha\in \supp(M)$ for any $1\le i\le k$.
\item  $\Gamma_\lambda$ is independent of $\lambda\in \supp(M)$, so we will omit the $\lambda$.
\item  The subdivision in (\ref{shadow}) does not depend on $\lambda$ with
\[\begin{aligned}
\Delta^{'\pm}&=\{\alpha\in\Delta'\,|\,\pm\alpha\notin\Gamma,\mp\alpha\in\Gamma\},\\
\Delta^{'I}&=\{\alpha\in\Delta'\,|\,\alpha,-\alpha\in\Gamma\},\\
\Delta^{'F}&=\{\alpha\in\Delta'\,|\,\alpha,-\alpha\notin\Gamma\}.
\end{aligned}\]
\end{enumerate}
\begin{comment}Moreover, for any nontrivial simple $W_m$ sub-quotient $V$ of $M$, we have

$$\Delta_M^{'\pm}\subseteq\Delta_V^{\pm}, \Delta_M^{'F}=\Delta_V^{'F}, \Delta_M^{'I}\supseteq\Delta_V^{'I}.$$
\end{comment}
\end{lemma}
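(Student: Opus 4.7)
My plan is to follow the strategy of \cite{PS}, adapted to $W_{m,n}$, proving the three parts in order since each builds on the previous.

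\textbf{Part (1).} Fix $\alpha\in\Delta'$. The triple $(e_\alpha,h_\alpha,e_{-\alpha})$ spans an $\sl_2$-subalgebra $\sl_2^\alpha\subset\sl_{m+1}$. The subspace $V_\lambda^\alpha:=\bigoplus_{k\in\Z}M_{\lambda+k\alpha}$ is $\sl_2^\alpha$-stable with $h_\alpha$-weight spaces equal to the $\h$-weight spaces $M_{\lambda+k\alpha}$ of $M$, so it is a Harish-Chandra $\sl_2$-module. A standard consequence of Lemma~\ref{sl2} is that every cyclic $\sl_2^\alpha$-submodule has convex $h_\alpha$-support; applied to $U(\sl_2^\alpha)v$ and $U(\sl_2^\alpha)w$ for nonzero $v\in M_\lambda$ and $w\in M_{\lambda+k\alpha}$, this yields intervals of weights in $\supp(M)$ around $\lambda$ and around $\lambda+k\alpha$. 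If these intervals overlap the claim follows. Otherwise I would invoke simplicity of $M$ as a $W$-module: writing $w=Xv$ for some $X\in U(W)$ of $\h$-weight $k\alpha$ and expanding $X$ in a PBW basis, one picks a monomial $x_{\gamma_1}\cdots x_{\gamma_l}$ with $x_{\gamma_1}\cdots x_{\gamma_l}v\ne 0$, so that all partial products $x_{\gamma_p}\cdots x_{\gamma_l}v$ are nonzero and produce a path of weights in $\supp(M)$ from $\lambda$ to $\lambda+k\alpha$; combining this path with the convex $\sl_2^\alpha$-support intervals based at each waypoint covers the line $\{\lambda+j\alpha:0\le j\le k\}$.

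\textbf{Part (2).} I would prove the stronger statement that for each $\alpha\in\Delta'$ the set $S(\alpha):=\{\mu\in\supp(M)\mid\mu+\Z_+\alpha\subset\supp(M)\}$ is either empty or all of $\supp(M)$. Part (1) together with Lemma~\ref{sl2} and a pigeonhole on the finite-dimensional space $M_\mu$ gives the characterization ``$\mu\in S(\alpha)$ iff $e_\alpha$ does not act locally nilpotently on $M_\mu$''. I then need to show this condition is intrinsic to $M$. For $\alpha\in\{\epsilon_i-\epsilon_j,-\epsilon_i\}$, a direct computation shows $\ad e_\alpha$ is locally nilpotent on $W_{m,n}$, so $\{u\in M\mid e_\alpha^Nu=0\text{ for some }N\}$ is a $W$-submodule of $M$ and simplicity forces it to be $0$ or $M$. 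The hardest case is $\alpha=\epsilon_i$: here $\ad e_{\epsilon_i}$ is \emph{not} locally nilpotent on $W_{m,n}$ for $m\ge 2$ (one checks $\ad(t_id)^k(t_r^2\partial_s)=k!\,t_i^kt_r^2\partial_s$ for $r,s\ne i$), so the $W$-submodule trick fails; I would instead propagate the property between different weights of $\supp(M)$ by a path-lifting argument that uses the roots already handled and part (1) to bridge any two weights. Once $S(\alpha)$ is shown to be $\lambda$-independent for every $\alpha\in\Delta'$, $\Gamma_\lambda=\langle S(\alpha)\cap\Delta'\rangle_{\Z_+}$ is too, and we may drop the subscript.

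\textbf{Part (3).} This is a formal reformulation of (2). Once $\Gamma$ is intrinsic, each $\alpha\in\Delta'$ falls into exactly one of four categories according to the pair of memberships $\alpha\in\Gamma$ and $-\alpha\in\Gamma$. Using part (1) to translate these memberships into the boundedness behavior of $n_\alpha^\lambda$ identifies the four categories with $\Delta^{'I},\Delta^{'-},\Delta^{'+},\Delta^{'F}$ from~\eqref{shadow}, and the $\lambda$-independence of the subdivision follows immediately from that of $\Gamma$. The step I expect to be the main obstacle is, as indicated above, the propagation argument for $\alpha=\epsilon_i$ in part~(2); its success hinges on the connectivity of $\supp(M)$ under $\Delta$-translations and on carefully importing the invariance established for the other two families of roots.
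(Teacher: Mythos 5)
Your proposal correctly identifies the difficulty and gets the easy cases right, but it has a genuine gap at exactly the point you flag as the ``main obstacle'': the case $\alpha=\epsilon_i$ in part~(2). The ``path-lifting argument'' you propose is never developed into an actual proof, and I do not see how it could be made to work: part~(1) controls the support only along a single $\alpha$-ray, and moving the basepoint from $\lambda$ to $\lambda+\gamma$ moves the whole ray to a parallel line in $\h^*$, so nothing in the sketch forces the ray behaviour at $\lambda+\gamma$ to match that at $\lambda$. The paper closes this case by a different device. Pass to the restricted dual $M_*=\bigoplus_\mu M_\mu^*$, which is again a simple weight $W$-module with finite-dimensional weight spaces and $\supp(M_*)=-\supp(M)$. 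One checks directly that $n_{\epsilon_i}^\lambda(M)=n_{-\epsilon_i}^{-\lambda}(M_*)$ as subsets of $\R$, and $-\epsilon_i$ does lie in $\Delta''\cup\{-\epsilon_j\mid j\}$, where $\ad e_{-\epsilon_i}=\ad\partial_i$ is locally nilpotent on $W$; so the $W$-submodule argument you already have, applied to $M_*$ and the root $-\epsilon_i$, shows the boundedness type is independent of the weight. This restricted-dual step is the missing idea, and without it (or something playing the same role) part~(2), and hence the lemma, is not established.

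A secondary remark: the ``otherwise'' branch in your proof of part~(1) has the same defect. The $\sl_2^\alpha$-support intervals based at the intermediate weights $\lambda+\sum_{s\ge p}\gamma_s$ lie on lines parallel to, and generically distinct from, $\lambda+\R\alpha$, so their union cannot be asserted to cover $\{\lambda+j\alpha:0\le j\le k\}$. Your convexity observation about cyclic $\sl_2$-submodules is correct and is indeed the right ingredient, but the dichotomy ``either the two intervals overlap or path-lift'' does not close part~(1). A safer route is to first reduce (by swapping $\alpha\leftrightarrow-\alpha$ and the two endpoints) to $\alpha\in\Delta''\cup\{-\epsilon_j\}$, where simplicity forces $e_\alpha$ to act either injectively or locally nilpotently on all of $M$, and then run a short case analysis with Lemma~\ref{sl2}; you would still need to check carefully the subcase $\alpha=-\epsilon_j$ with $\partial_j$ locally nilpotent, which is the one the paper's terse ``follows from $\sl_2$ representation theory'' glosses over.
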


\begin{proof}
The first statement follows from representation theory of $\mathfrak{sl}_2$. The third statement follows directly from the second one. So it remains to prove the second statement.

Fix $\alpha\in\Delta'$ and $\lambda\in\supp(M)$. If $n_\alpha^\lambda$ is bounded from above, then $e_\alpha$ acts locally nilpotently on $M_\lambda$. Hence $e_\alpha$ acts locally nilpotently on $M$ if $\alpha\in\Delta''\cup\{-\epsilon_i\,|\,i\in\overline{1,m}\}$, since $M$ is simple and the adjoint action of $e_{\alpha}$ on $W$ is locally nilpotent. Therefore, for $\alpha\neq\epsilon_i, 1\leq i\leq m$, if $n_\alpha^\lambda$ is bounded from above for some $\lambda$, then $n_\alpha^\lambda$ is bounded from above for all $\lambda$ by Lemma \ref{sl2}. So, the type of $n_\alpha^\lambda$ for $\alpha\neq\epsilon_i$ does not depend on $\lambda$. For $\alpha=\epsilon_i$ with $n_\alpha^\lambda$ bounded from above, the type of $n_{-\alpha}^{-\lambda}$ for the restricted dual of $M$ is bounded from above, and the type does not depend on $\lambda$. From (1), we know that $\Gamma_\lambda$ is independent of $\lambda$.
\end{proof}

%Since $M$ is nontrivial, it has a nontrivial $W_m$ sub-quotient. Thus, we know that the decomposition (\ref{shadow}) is a shadow decomposition of $\Delta'$ in the sense that it is a parabolic decomposition with $\Delta^{'0}=\Delta_M^{'F}\sqcup\Delta_M^{'I}, \Delta^{'\pm}=\Delta_M^{'\pm}$ and roots in $\Delta_M^{'F}$ are orthogonal to roots in $\Delta_M^{'I}$. Moreover, we have $\Delta^{'F}_M\subseteq\Delta''$.

%Then we go through the Lemma 5 to Lemma 11 in \cite{PS}. Note that Lemma 7, Lemma 8, and corollary 2 in \cite{PS} clearly also hold for our $M$.

For any $\lambda\in \supp(M)$, define $K_{\lambda}:=\{\alpha\in \Delta'|\lambda+\alpha\notin \supp(M)\}, \bar{K}_{\lambda}=\Delta'\setminus K_{\lambda}$. We call $\lambda\in \supp(M)$ extremal if $K_{\lambda}$ is maximal,i.e., $K_{\lambda}$ is not a proper subset of $K_{\mu}$ for any $\mu\in \supp(M)$.

\begin{lemma}\cite[Lemma 5]{PS} \label{PS-Lemma5}
\begin{enumerate}
\item  $\{\epsilon_i,-\epsilon_i\}\cap \Gamma\neq\emptyset$ for all $i$.
\item If $\alpha\in \Delta'^F$, then $\alpha\in \Delta''$, and $\supp(M)$ is invariant with respect to the Weyl group refection $r_{\alpha}$. Moreover, if $\alpha\in K_{\lambda}$, then $(\lambda,\alpha)\in \Z_+$.
\item If $\alpha\in \Gamma$ and $\lambda,\lambda-\alpha\in \supp(M)$, the $K_{\lambda}\subset K_{\lambda-\alpha}$. In particular, if $\lambda$ is extremal, then $\lambda-\alpha$ is also extremal with $K_{\lambda}=K_{\lambda-\alpha}$.
\end{enumerate}
\end{lemma}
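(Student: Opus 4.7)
The plan is to prove (3) and (2) first; statement (1) is a direct corollary of the inclusion $\Delta^{'F} \subseteq \Delta''$ in (2), since $\epsilon_i \notin \Delta''$ then forces $\epsilon_i \notin \Delta^{'F}$, which by the description of $\Delta^{'F}$ in Lemma \ref{shadow-2}(3) means exactly $\{\epsilon_i, -\epsilon_i\} \cap \Gamma \ne \emptyset$.

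For (3), the key inputs are the convexity along root directions (Lemma \ref{shadow-2}(1)) and the base-point independence of $\Gamma$ (Lemma \ref{shadow-2}(2)). Given $\alpha \in \Gamma$ and $\lambda, \lambda - \alpha \in \supp(M)$, write $\alpha = \gamma_1 + \cdots + \gamma_r$ as a $\Z_+$-sum of generators $\gamma_k \in \Delta'$ of $\Gamma$; by base-point independence each such generator satisfies $\mu + \Z_+\gamma_k \subseteq \supp(M)$ for every $\mu \in \supp(M)$. For $\beta \in K_\lambda$, suppose for contradiction that $\lambda - \alpha + \beta \in \supp(M)$. Successively add $\gamma_1, \ldots, \gamma_r$ to this starting point, remaining in $\supp(M)$ at each step, to reach $\lambda + \beta \in \supp(M)$, contradicting $\beta \in K_\lambda$. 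This proves $K_\lambda \subseteq K_{\lambda - \alpha}$. The extremal conclusion follows: maximality of $K_\lambda$ forces $K_{\lambda - \alpha} = K_\lambda$, so $\lambda - \alpha$ is extremal as well.

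For the concrete case of (2), suppose $\alpha = \epsilon_i - \epsilon_j \in \Delta''$ lies in $\Delta^{'F}$. The associated $\sl_2$-triple $\{e_\alpha, e_{-\alpha}, h_\alpha = d_i - d_j\} \subseteq \g$ then has both $e_{\pm\alpha}$ acting locally nilpotently on $M$ (combining the argument from the proof of Lemma \ref{shadow-2} with $\pm\alpha \notin \Gamma$), so every cyclic submodule $U(\sl_2)v$ is finite-dimensional. Decomposing $M$ accordingly into finite-dimensional $\sl_2$-irreducibles immediately yields the $r_\alpha$-invariance of $\supp(M)$ via weight-string symmetry; and whenever $\alpha \in K_\lambda$, we have $e_\alpha M_\lambda = 0$, so $M_\lambda$ consists of highest-weight vectors, giving $\lambda(h_\alpha) = (\lambda, \alpha) \in \Z_+$.

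The main obstacle is the remaining part of (2): ruling out $\alpha = \pm\epsilon_i \in \Delta^{'F}$. Assume $\epsilon_i \in \Delta^{'F}$, so $\pm\epsilon_i \notin \Gamma$; then $e_{\epsilon_i}$ and $\partial_i$ both act locally nilpotently on $M$, the $\sl_2$-triple $\{e_{\epsilon_i}, \partial_i, h_i = E + d_i\} \subseteq \g$ acts locally finitely, and $\lambda(h_i) \in \Z$ for every $\lambda \in \supp(M)$. To upgrade this to a contradiction I would exploit the ambient algebra $W$: additional operators $t_i^{k+1}\partial_i, t_i^{k+1}\partial_j \in W$ (for $k \geq 1$ and $j \neq i$) carry an $\epsilon_i$-shift in their weights, hence they too must act locally nilpotently since $n_{\epsilon_i}^\lambda$ is uniformly bounded from above. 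The resulting family of $\sl_2$-triples and the commutators among them constrain $\supp(M)$ so strongly that finite-dimensionality of $\h$-weight spaces, combined with simplicity and non-triviality of $M$, becomes untenable; this is the contradiction produced in \cite{PS}, and its adaptation to the super setting is routine because all of the relevant subalgebras sit in the even part of $W$. Once $\Delta^{'F} \subseteq \Delta''$ is established, (1) is immediate as noted in the first paragraph.
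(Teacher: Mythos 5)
Your treatments of (3) and of the $\Delta''$ case of (2) are correct and essentially identical to the paper's: (3) is the one-line observation that $\delta\in\bar K_{\lambda-\alpha}$ implies $\lambda+\delta=(\lambda-\alpha+\delta)+\alpha\in\supp(M)$ because $\alpha\in\Gamma$, and (2) for $\alpha\in\Delta''$ is the local finiteness of the $\sl_2$-triple attached to $\alpha$ plus weight-string symmetry. The only structural difference is the direction of deduction between (1) and the claim $\Delta^{'F}\subseteq\Delta''$: you derive (1) from that inclusion, while the paper proves (1) directly and the inclusion is then the trivial consequence. Either order is fine — but only if the one nontrivial statement is actually proved, and that is where your proposal has a genuine gap.

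Your paragraph ruling out $\pm\epsilon_i\in\Delta^{'F}$ does not contain an argument: it asserts that auxiliary $\sl_2$-triples "constrain $\supp(M)$ so strongly that finite-dimensionality of weight spaces \dots becomes untenable" and defers to the contradiction "produced in \cite{PS}". That is not the mechanism, and no contradiction is exhibited. The actual argument (which the paper gives, quoting \cite[Lemma 4]{PS}) is short but specific: if $\pm\epsilon_i\notin\Gamma$, then one shows $e_{-\epsilon_i}M=\partial_i M=0$; since $[\partial_i,W]=W$ (every basis vector $t^{\alpha}\xi_I\partial_j$ is $\tfrac{1}{\alpha_i+1}[\partial_i,t^{\alpha+e_i}\xi_I\partial_j]$) and $\partial_i M=0$ forces $[\partial_i,w]m=\partial_i(wm)\in\partial_i M=0$, one gets $WM=0$, contradicting nontriviality of $M$. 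So the obstruction is not that weight multiplicities blow up — it is that $M$ collapses to a trivial module. Without identifying this (or supplying the proof that $\partial_i$ annihilates $M$, which is the content of \cite[Lemma 4]{PS} and is not a routine consequence of local nilpotence of $e_{\pm\epsilon_i}$ alone), the key step of (1), and hence of the inclusion $\Delta^{'F}\subseteq\Delta''$ on which your whole logical order rests, remains unproved.
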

\begin{proof}
(1) Suppose to the contrary that $\pm\epsilon_i\notin\Gamma$, then from \cite[Lemma 4]{PS} we have $e_{-\epsilon_i}M=0$. Hence from $[e_{-\epsilon_i}, W]=W$ we know that $M$ is a trivial module, a contradiction.

(2) Since $e_\alpha,e_{-\alpha}$ act locally nilpotently on $M$, the $\mathfrak{sl}_2$-subalgebra generated by $e_\alpha,e_{-\alpha}$ acts locally finitely on $M$. Hence, $\supp(M)$ is $r_\alpha$-invariant. And if $\alpha\in K_\lambda$, $\lambda$ is a highest weight of a finite dimensional $\mathfrak{sl}_2$ module. And the statement follows.

(3) If $\delta\in\bar{K}_{\lambda-\alpha}$, then $\lambda-\alpha+\delta\in\supp(M)$ and hence $\lambda+\delta\in\supp(M)$ since $\alpha\in\Gamma$. Therefore, $\delta\in\bar{K}_\lambda$. So, $K_{\lambda}\subset K_{\lambda-\alpha}$.
%
%(4) Assume to the contrary that $\lambda+j\alpha\notin\supp(M)$ for some $j<k$. From representation theory of $\mathfrak{sl}_2$, we know that $\alpha\notin\Delta''$. If $\alpha=\pm\epsilon_i$ for some $i$, then from (1), we get a contradiction.
\end{proof}

\begin{lemma}\cite[Lemma 6]{PS}\label{PS-Lemma6}Let $\lambda\in \supp(M)$ be extremal, and $\alpha,\beta,\alpha+\beta\in \Delta'$, then
\begin{enumerate}
\item If $\alpha,\beta\in K_{\lambda}$, then $\alpha+\beta\in K_{\lambda}$;
\item If $\alpha,\beta\in \bar{K}_{\lambda}$, then $\alpha+\beta\in \bar{K}_{\lambda}$.
\end{enumerate} \end{lemma}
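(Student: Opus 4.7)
My plan is to follow the strategy of \cite[Lemma 6]{PS} adapted to the current setup, working entirely inside the purely even subalgebra $\g=\sl_{m+1}\subset W_{m,0}\subset W$ and its rank-$2$ subsystems of type $A_2$. The first step is a general observation that cuts down the casework: by Lemma \ref{shadow-2}, any $\alpha\in\Delta'^I\cup\Delta'^{-}$ satisfies $\lambda+\Z_+\alpha\subset\supp(M)$, so in particular $\lambda+\alpha\in\supp(M)$, i.e.\ $\alpha\in\bar K_\lambda$. Hence $K_\lambda\subseteq\Delta'^{+}\cup\Delta'^{F}$, and in both parts of the lemma we only have to deal with roots of these two types; in particular $e_\alpha$ acts locally nilpotently whenever $\alpha\in\Delta'^+$, and reflections $r_\alpha$ preserve $\supp(M)$ whenever $\alpha\in\Delta'^F$ (Lemma \ref{PS-Lemma5}(2)).

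For part (1), I would argue by contradiction: assume $\mu:=\lambda+\alpha+\beta\in\supp(M)$ and pick $0\neq v\in M_\mu$. Since $\lambda+\beta\notin\supp(M)$ we get $e_{-\alpha}v=0$, and similarly $e_{-\beta}v=0$. Because $\{\alpha,\beta,\alpha+\beta\}$ generates an $A_2$-subsystem of $\Delta'$, the Serre/bracket relation forces $e_{-(\alpha+\beta)}v=[e_{-\alpha},e_{-\beta}]v=0$, so $v$ is annihilated by the full lower nilpotent of the $A_2$-triple. A case analysis on the types of $\alpha,\beta,\alpha+\beta$ in the shadow decomposition then yields a contradiction. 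When all three are in $\Delta'^F$ the integrality $(\lambda,\gamma)\in\Z_+$ from Lemma \ref{PS-Lemma5}(2) combined with reflection-invariance lets me reflect $\mu$ to produce a weight $\lambda'\in\supp(M)$ with $K_{\lambda'}\supsetneq K_\lambda$, contradicting extremality. When some of the three are in $\Delta'^+$, the local nilpotency of $e_\alpha,e_\beta,e_{\alpha+\beta}$ on $M$ combined with the lowest-weight condition at $v$ and Lemma \ref{sl2}(2) applied to the $\mathfrak{sl}_2$-triples forces a $\Delta'^F$-root $\gamma$ with $\gamma\in K_\lambda$ but $\gamma\notin K_{\lambda-k(\alpha+\beta)}$ (using Lemma \ref{PS-Lemma5}(3) to move within $\Gamma$), again contradicting maximality of $K_\lambda$.

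For part (2), I would pick $0\neq u\in M_\lambda$ and analyze the three candidates $e_\beta e_\alpha u,\ e_\alpha e_\beta u$ and $e_{\alpha+\beta}u$ inside $M_{\lambda+\alpha+\beta}$. By the bracket relation $e_{\alpha+\beta}u=c^{-1}[e_\alpha,e_\beta]u$ for some nonzero scalar $c$, so if all three were to vanish on every $u\in M_\lambda$ then $e_\alpha,e_\beta$ would commute on $M_\lambda$ and act injectively there (since $\alpha,\beta\in\bar K_\lambda$ means $e_\alpha,e_\beta$ are nonzero into the respective weight spaces). If $\lambda+\alpha+\beta\notin\supp(M)$, this puts $-\alpha-\beta\in K_{\lambda+\alpha+\beta}$; tracing back through $\lambda+\alpha$ and $\lambda+\beta$ via Lemma \ref{PS-Lemma5}(3) and reflections again produces an extremal-weight contradiction, exactly as in part (1).

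The main obstacle is the bookkeeping in the case split on the triple $(\alpha,\beta,\alpha+\beta)\in(\Delta'^{F}\cup\Delta'^{+})^{\times 3}$, making sure that in each of the configurations one can actually exhibit a weight whose $K$-set strictly contains $K_\lambda$; the subcase in which $\alpha+\beta\in\Delta'^+$ while $\alpha,\beta\in\Delta'^F$ seems the most delicate, since there the reflection-invariance of $\supp(M)$ along $\alpha,\beta$ has to be combined with the one-sided boundedness of the $(\alpha+\beta)$-string to pin down a new extremal weight.
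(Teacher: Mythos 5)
Your proposal is a plan rather than a proof: the two places where the actual content of the lemma lives are exactly the places you defer. You yourself flag ``the bookkeeping in the case split'' as the main obstacle and single out a ``most delicate'' subcase without resolving it, so the argument as written does not close. Beyond the incompleteness, there are two concrete problems. First, in part (1) the observation that $e_{-\alpha}v=e_{-\beta}v=e_{-(\alpha+\beta)}v=0$ for $v\in M_{\lambda+\alpha+\beta}$ is not itself a contradiction -- a weight vector is perfectly allowed to be a lowest-weight vector for an $A_2$-subsystem -- so this step carries no force; the contradiction has to come from somewhere else. Second, in part (2) you assert that $\alpha\in\bar K_\lambda$ makes $e_\alpha$ act injectively (or at least nontrivially) on $M_\lambda$; this is false: $\lambda+\alpha\in\supp(M)$ says nothing about the map $e_\alpha\colon M_\lambda\to M_{\lambda+\alpha}$, and the whole point of the support-combinatorial approach is to avoid such claims.

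The paper's proof organizes the cases not by shadow type ($\Delta'^{F}$ versus $\Delta'^{+}$) but by whether the roots lie in $\Delta''=\{\epsilon_i-\epsilon_j\}$ or in $\{\pm\epsilon_i\}$, and this is what makes the casework finite and tractable. If at least one of $\alpha,\beta$, say $\alpha$, lies in $\Delta''$, then in part (1) one checks $\alpha\in K_\lambda$ and $-\alpha\in K_\mu$ force $\alpha\in\Delta'^{F}$, so Lemma \ref{PS-Lemma5}(2) gives $(\lambda,\alpha)\in\Z_+$ and $(\mu,\alpha)\leq 0$, incompatible with $(\mu-\lambda,\alpha)=(\alpha+\beta,\alpha)=1$. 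If neither lies in $\Delta''$, then necessarily $\{\alpha,\beta\}=\{-\epsilon_i,\epsilon_j\}$ and the contradiction comes from Lemma \ref{PS-Lemma5}(1), i.e.\ $\{\epsilon_i,-\epsilon_i\}\cap\Gamma\neq\emptyset$ -- a lemma your framework never invokes but which is indispensable for these mixed $\pm\epsilon$ cases. In part (2) the genuinely hard step is the use of extremality: from $\beta\in K_{\lambda+\alpha}\setminus K_\lambda$ one produces $\delta\in K_\lambda\setminus K_{\lambda+\alpha}$ and derives $(\lambda,\alpha)\leq 0$, contradicting $(\lambda,\alpha)\geq 1$ obtained from $\alpha\in K_{\lambda+\beta}$. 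Your sketch gestures at ``producing a weight whose $K$-set strictly contains $K_\lambda$,'' but the paper's mechanism is the reverse use of maximality (extracting $\delta$ from the non-inclusion $K_{\lambda+\alpha}\not\supseteq K_\lambda$), and without that device I do not see how your outlined reflections would terminate. You should redo the case split along the $\Delta''$ versus $\pm\epsilon_i$ dichotomy and supply the inner-product estimates explicitly.
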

\begin{proof} (1) Suppose to the contrary that  $\alpha+\beta\not\in K_{\lambda}$. First, assume that at least one of the two roots $\alpha,\beta $ (say $\alpha$) belongs to $\Delta''$. Then $\mu=\lambda+\alpha+\beta\in \supp(M)$, $\alpha\in K_{\lambda}, -\alpha\in K_{\mu}$. From Lemma \ref{PS-Lemma5},$(\lambda,\alpha)\in \Z_+$ and $(\mu,\alpha)\in \Z_-$. Since $(\mu-\lambda,\alpha)=(\alpha+\beta,\alpha)=1$, it is impossible.

Let now $\alpha,\beta\not\in \Delta''$. Then without loss of generality, we may assume that $\alpha=-\epsilon_i$ and $\beta=\epsilon_j$. Since $\alpha+\beta=\epsilon_j-\epsilon_i\not\in K_{\lambda}$, we have $\mu=\lambda+\epsilon_j-\epsilon_i\in \supp(M)$. Then $-\epsilon_i,\epsilon_j\in K_{\lambda}$, $\epsilon_i,-\epsilon_j\in K_{\mu}$, which from Lemma \ref{PS-Lemma5} is impossible.

(2) Suppose to the contrary that $\alpha+\beta\in K_{\lambda}$. Then $\lambda+\alpha+\beta\not\in \supp(M), \lambda+\alpha,\lambda+\beta\in \supp(M)$.  As in the proof of (a), we consider two cases. First, let $\alpha\in\Delta''$. If $-\alpha\in\Gamma$, then $\beta\in K_{\lambda+\alpha}=K_{\lambda}$, which is a contradiction. On the other hand $\alpha\not\in \Gamma$ since $\alpha\in K_{\lambda+\beta}$. Therefore from Lemma \ref{PS-Lemma5} $(\lambda+\beta,\alpha)\in \Z_+$. Since $(\beta,-\alpha)=1$, we have $(\lambda,\alpha)\ge1$. Furthermore, $\beta\in K_{\lambda+\alpha}\setminus K_{\lambda}$. Since $\lambda$ is extremal, one can find $\delta\in K_{\lambda}\setminus K_{\lambda+\alpha}$. Then  $\lambda+\alpha+\delta\in \supp(M)$ and $\lambda+\delta\not\in \supp(M)$, which implies $(\lambda+\alpha+\delta,\alpha)\in \Z_-$. But $(\delta,\alpha)\ge -1$ and therefore $(\lambda,\alpha)\in -((\alpha,\alpha)+(\delta,\alpha))+\Z_-\le 0$ which contradicts to $(\lambda,\alpha)\ge 1$.

Now let $\alpha,\beta\not\in \Delta''$, for example, $\alpha=-\epsilon_i,\beta=\epsilon_j$. Then $\lambda-\epsilon_i,\lambda+\epsilon_j\in \supp(M)$, $\lambda+\epsilon_j-\epsilon_i\not\in \supp(M)$. Then $-\epsilon_j\not\in \Gamma$, otherwise $-\epsilon_i\in K_{\lambda}=K_{\lambda+\epsilon_j}$, a contradiction. On the other hand, $\epsilon_j\in K_{\lambda-\epsilon_i}$ implies $\epsilon_j\not\in \Gamma$, a contradiction to Lemma \ref{PS-Lemma5}(a).\end{proof}

\begin{corollary}\cite[Corollary 1]{PS}\label{PS-Corollary1}
Let $\lambda\in\supp(M)$ be extremal. Then $\prescript{}{\lambda}\Delta^{'+}\sqcup\prescript{}{\lambda}\Delta^{'0}\sqcup\prescript{}{\lambda}\Delta^{'-}$ is a parabolic decomposition of $\Delta'$, where
\[\begin{aligned}
\prescript{}{\lambda}\Delta^{'+}&:=K_\lambda\setminus(K_\lambda\cap-K_\lambda),\\
\prescript{}{\lambda}\Delta^{'-}&:=\bar{K}_\lambda\setminus(\bar{K}_\lambda\cap-\bar{K}_\lambda),\\
\prescript{}{\lambda}\Delta^{'0}&:=(K_\lambda\cap-K_\lambda)\sqcup(\bar{K}_\lambda\cap-\bar{K}_\lambda).
\end{aligned}\]
\end{corollary}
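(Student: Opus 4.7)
Here is the plan for the corollary. The strategy is to establish the three defining conditions of a parabolic decomposition by constructing a linear functional $\mu\in\h$ that is strictly positive on $\prescript{}{\lambda}\Delta^{'+}$ and vanishes on $\langle\prescript{}{\lambda}\Delta^{'0}\rangle_{\R}$. The elementary set-theoretic properties are immediate from the definitions: the three subsets are pairwise disjoint with union $\Delta'$, and the negation involution exchanges $\prescript{}{\lambda}\Delta^{'+}$ and $\prescript{}{\lambda}\Delta^{'-}$ while preserving $\prescript{}{\lambda}\Delta^{'0}$.

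First, Lemma \ref{PS-Lemma6} gives that both $K_\lambda$ and $\bar{K}_\lambda$ are closed under addition in $\Delta'$. From this it follows that $\prescript{}{\lambda}\Delta^{'+}$ is also closed under addition in $\Delta'$: if $\alpha,\beta\in\prescript{}{\lambda}\Delta^{'+}$ with $\alpha+\beta\in\Delta'$, then $\alpha+\beta\in K_\lambda$ by part (1) applied to $\alpha,\beta$, while $-(\alpha+\beta)\in\bar{K}_\lambda$ by part (2) applied to $-\alpha,-\beta$, so $\alpha+\beta\in\prescript{}{\lambda}\Delta^{'+}$.

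Next, identify $\Delta'$ with the type $A_m$ root system $\{\epsilon_i-\epsilon_j:0\leq i\neq j\leq m\}$ using the virtual index $\epsilon_0:=0$, and define relations on $\{0,1,\ldots,m\}$ by $i\succ j$ iff $\epsilon_i-\epsilon_j\in\prescript{}{\lambda}\Delta^{'+}$ and $i\sim j$ iff $i=j$ or $\epsilon_i-\epsilon_j\in\prescript{}{\lambda}\Delta^{'0}$. The closure property just established shows $\succ$ is a strict partial order (irreflexivity and antisymmetry are automatic). Transitivity of $\sim$ reduces to verifying, for distinct $i,j,k$ with $i\sim j$ and $j\sim k$, that $\epsilon_i-\epsilon_k\notin\prescript{}{\lambda}\Delta^{'+}\cup\prescript{}{\lambda}\Delta^{'-}$, which I handle by a short case analysis using the closure of $K_\lambda$ and $\bar{K}_\lambda$. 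The subtle case is when one link lies in $K_\lambda\cap-K_\lambda$ and the other in $\bar{K}_\lambda\cap-\bar{K}_\lambda$, which I resolve by summing the appropriate complementary pair inside $\bar{K}_\lambda$ to force $\epsilon_j-\epsilon_i\in\bar{K}_\lambda$, contradicting $K_\lambda\cap\bar{K}_\lambda=\emptyset$. A similar argument shows $\succ$ descends to a strict partial order on the $\sim$-classes.

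To finish, extend this partial order on $\sim$-classes to a total order and define $\mu\in\h$ by $\mu(d_i):=r_{[i]}-r_{[0]}$, where $r_{[i]}$ denotes the rank of the class $[i]$ in the total order. Then $\mu$ vanishes on $\langle\prescript{}{\lambda}\Delta^{'0}\rangle_{\R}$ and is strictly positive on $\prescript{}{\lambda}\Delta^{'+}$, which directly yields the three parabolic conditions: (iii) follows from $\mu\neq 0$ on $\prescript{}{\lambda}\Delta^{'+}$; (ii) follows by applying $\mu$ to $\alpha-\beta$ with $\alpha\in\prescript{}{\lambda}\Delta^{'+},\beta\in\prescript{}{\lambda}\Delta^{'-}$; and the triangular decomposition (iv) follows by applying $\mu$ to a hypothetical common element of the two positive cones. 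The main obstacle throughout is the transitivity of $\sim$, since the definition of $\prescript{}{\lambda}\Delta^{'0}$ as a disjoint union of two subsets admits chains that mix between them, and closure must be applied to just the right reindexed sum to extract a contradiction.
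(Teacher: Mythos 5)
Your argument is correct. The paper itself gives no proof of this corollary, deferring entirely to \cite[Corollary 1]{PS}, so what you have written is a self-contained derivation from Lemma \ref{PS-Lemma6} alone. Your route --- identifying $\Delta'$ with the $A_m$ root system via $\epsilon_0:=0$, using the closedness of $K_\lambda$ and of $\bar{K}_\lambda$ to produce a strict partial order $\succ$ and an equivalence $\sim$ on $\{0,1,\dots,m\}$, and then realizing the decomposition by a linear functional constant on $\sim$-classes and increasing along $\succ$ --- is essentially the standard proof that a closed subset of a finite root system with closed complement is a parabolic subset, specialized to type $A$. The two delicate points both check out: (a) in the mixed case of transitivity of $\sim$ (and likewise in the compatibility of $\succ$ with $\sim$), assuming $\epsilon_i-\epsilon_k\in\prescript{}{\lambda}\Delta^{'+}$ or $\prescript{}{\lambda}\Delta^{'-}$ one can always re-associate the telescoping sum of the three roots so that both summands lie in $K_\lambda$ or both in $\bar{K}_\lambda$, producing the contradiction with $K_\lambda\cap\bar{K}_\lambda=\emptyset$ that you indicate; (b) for the triangular-decomposition condition it is not enough that $\mu$ vanish on $\langle\prescript{}{\lambda}\Delta^{'0}\rangle_{\R}$ --- one must use that $\mu$ is \emph{strictly} positive on $\prescript{}{\lambda}\Delta^{'+}$ and strictly negative on $\prescript{}{\lambda}\Delta^{'-}$, so that a common element $x$ of the two cones $\langle-p(\prescript{}{\lambda}\Delta^{'+})\cup p(\prescript{}{\lambda}\Delta^{'-})\rangle_{\R_+}$ and $\langle p(\prescript{}{\lambda}\Delta^{'+})\cup -p(\prescript{}{\lambda}\Delta^{'-})\rangle_{\R_+}$ has $\bar{\mu}(x)=0$, forcing every coefficient in either nonnegative representation of $x$ to vanish. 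Your sketch already contains both points, so the proof is complete as outlined.
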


\begin{lemma}\cite[Lemma 8]{PS}\label{PS-Lemma8}
\begin{enumerate}
\item $\Delta^{'-}\subseteq\Gamma$.
\item $\Delta^{'F}$ is a root subsystem of $\Delta''$.
\item $(\alpha,\beta)=0$, for all $\alpha\in\Delta^{'F}, \beta\in\Delta^{'I}$.
\end{enumerate}
\end{lemma}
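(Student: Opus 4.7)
The plan is to treat the three parts sequentially, relying on Lemmas \ref{shadow-2} and \ref{PS-Lemma5} as the main tools, and exploiting throughout the equivalence (established inside the proof of Lemma \ref{shadow-2}) that $\gamma \in \Gamma$ iff $\lambda + \Z_+\gamma \subset \supp(M)$, valid for any $\lambda\in\supp(M)$.

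Part (1) will be essentially immediate. If $\alpha \in \Delta^{'-}$, then by definition $n_\alpha^\lambda$ is bounded below but unbounded above, so $\lambda + k\alpha \in \supp(M)$ for arbitrarily large $k \in \Z_+$. The interval property of Lemma \ref{shadow-2}(1) then yields $\lambda + \Z_+\alpha \subset \supp(M)$, exhibiting $\alpha$ as a generator of $\Gamma$.

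For Part (2), I first verify $\Delta^{'F} \subseteq \Delta''$: any $\alpha = \pm\epsilon_i \in \Delta^{'F}$ would force both $\epsilon_i, -\epsilon_i \notin \Gamma$, contradicting Lemma \ref{PS-Lemma5}(1). To establish closure under root sums, take $\alpha,\beta \in \Delta^{'F}$ with $\alpha+\beta \in \Delta''$; in type $A$ this is possible only when $(\alpha,\beta)=-1$, so $r_\alpha(\beta) = \alpha+\beta$. Since $\supp(M)$ is $r_\alpha$-invariant by Lemma \ref{PS-Lemma5}(2), the set $n_{\alpha+\beta}^{r_\alpha(\lambda)}$ is in bijection with $n_\beta^\lambda$ and inherits boundedness in both directions. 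Base-point-independence from Lemma \ref{shadow-2}(3) then gives $\alpha+\beta \in \Delta^{'F}$.

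Part (3) is the main obstacle. Suppose for contradiction $(\alpha,\beta) \neq 0$ with $\alpha \in \Delta^{'F}$ and $\beta \in \Delta^{'I}$. Since $\beta,-\beta \in \Gamma$ are generators and $\supp(M)$ is $r_\alpha$-invariant, applying $r_\alpha$ to the rays $\lambda+\Z_+(\pm\beta)$ shows that $r_\alpha(\beta) = \beta-(\alpha,\beta)\alpha$ and $-r_\alpha(\beta)$ are roots in $\Delta'$ that are also generators of $\Gamma$. The $\Z_+$-monoid structure then yields
\[
(\alpha,\beta)\alpha \;=\; \beta + \bigl(-r_\alpha(\beta)\bigr) \in \Gamma.
\]
If $c\alpha \in \Gamma$ with $c>0$, unwinding the definition gives $\lambda + kc\alpha \in \supp(M)$ for all $k \in \Z_+$; the interval property of Lemma \ref{shadow-2}(1) upgrades this to $\lambda+\Z_+\alpha \subset \supp(M)$, whence $\alpha \in \Gamma$, contradicting $\alpha \in \Delta^{'F}$. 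The case $c<0$ is symmetric and yields $-\alpha \in \Gamma$, equally a contradiction. Therefore $(\alpha,\beta)=0$. The delicate point is precisely this bookkeeping: one must verify that $r_\alpha$ sends generators of $\Gamma$ to generators of $\Gamma$ and then boost a nonzero integer multiple of $\alpha$ lying in $\Gamma$ all the way to $\alpha$ itself via the interval property.
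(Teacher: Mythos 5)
Your proof is correct and follows essentially the same route as the paper: part (1) from the interval property, part (2) from $\Delta^{'F}\subseteq\Delta''$ together with the $r_\alpha$-invariance of $\supp(M)$, and part (3) by reflecting the generator $\beta$ of $\Gamma$ and using the monoid structure to land $\pm\alpha$ in $\Gamma$. The only cosmetic difference is that in (2) you argue directly via the equality $n_{\alpha+\beta}^{r_\alpha(\lambda)}=n_\beta^\lambda$, where the paper runs the same reflection in reverse as a proof by contradiction.
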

\begin{proof}
The first statement is clear. For the second statement, from Lemma \ref{PS-Lemma5} we have $\Delta^{'F}\subseteq\Delta''$ and, by definition, $-\Delta^{'F}=\Delta^{'F}$. All we need to check is that for any $\alpha,\beta\in\Delta^{'F}, \alpha+\beta\in\Delta''$ implies $\alpha+\beta\in\Delta^{'F}$. If $\alpha+\beta\notin\Delta^{'F}$, then ${\pm(\alpha+\beta)}\cap\Gamma\neq\varnothing$. Without loss of generality, we may assume $\alpha+\beta\in\Gamma$, then $\lambda+k(\alpha+\beta)\in\supp(M)$ for all $\lambda\in\supp(M)$ and $k\in\Z_+$. Take $\lambda$ such that $\alpha\in K_\lambda$, then $(\lambda,\alpha)\geq0$. Therefore, $r_\alpha(\lambda+k(\alpha+\beta))=\lambda-(\lambda,\alpha)\alpha+k\beta\in\supp(M)$. Then Lemma \ref{PS-Lemma5} tells us that $\lambda+k\beta\in\supp(M)$ for all $k\in\Z_+$, a contradiction.

If there are $\alpha\in\Delta^{'F},\beta\in\Delta^{'I}$ such that $(\alpha,\beta)>0$, then the roots $\pm\alpha,\pm\beta,\pm r_\alpha(\beta)$ form a root system $R$ of type $A_2$. $\Gamma\cap R=\{\pm\beta\}$ or $\Gamma\cap R$ contains all roots from the half-plane of $\langle R\rangle_{\R}$ bounded by $\R\beta$. Then $\{\pm\alpha\}\cap\Gamma\neq\varnothing$, which is impossible. Thus, the third statement follows.
\end{proof}

\begin{lemma}\cite[Lemma 9]{PS}\label{PS-Lemma9} Let $\lambda\in \supp(M)$ be extremal. Then
$$K_{\lambda}=\Delta^{’+}\sqcup \{\alpha\in \Delta^{'F}|(\lambda,\alpha)\ge 0\}.$$
\end{lemma}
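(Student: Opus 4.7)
The plan is to establish the two inclusions separately. For the direction $K_\lambda\subseteq\Delta^{'+}\sqcup\{\alpha\in\Delta^{'F}\,|\,(\lambda,\alpha)\geq0\}$, I would take $\alpha\in K_\lambda$ and use the four-way partition $\Delta'=\Delta^{'+}\sqcup\Delta^{'-}\sqcup\Delta^{'F}\sqcup\Delta^{'I}$ of Lemma \ref{shadow-2}. If $\alpha\in\Delta^{'-}\cup\Delta^{'I}$, then $n_\alpha^\lambda$ is unbounded from above, so some $\lambda+q\alpha$ with $q\geq 1$ lies in $\supp(M)$; Lemma \ref{shadow-2}(1) then forces $\lambda+\alpha\in\supp(M)$, contradicting $\alpha\in K_\lambda$. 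The remaining option $\alpha\in\Delta^{'F}$ gives $(\lambda,\alpha)\in\Z_+$ directly from Lemma \ref{PS-Lemma5}(2), and the union is disjoint because $\Delta^{'+}\cap\Delta^{'F}=\varnothing$.

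For the reverse inclusion I plan to split into two subcases. If $\alpha\in\Delta^{'+}$, I would assume for contradiction that $\lambda+\alpha\in\supp(M)$. Lemma \ref{shadow-2}(3) gives $-\alpha\in\Gamma$, and Lemma \ref{shadow-2}(1) produces the chain $\lambda,\lambda+\alpha,\ldots,\lambda+q_{\max}\alpha\in\supp(M)$, where $q_{\max}:=\max n_\alpha^\lambda$ exists because $\alpha\in\Delta^{'+}$. Iterated application of Lemma \ref{PS-Lemma5}(3) along this chain (with $-\alpha\in\Gamma$, each step preserving extremality) then yields $K_\lambda=K_{\lambda+\alpha}=\cdots=K_{\lambda+q_{\max}\alpha}$. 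Since $\lambda+(q_{\max}+1)\alpha\notin\supp(M)$, we obtain $\alpha\in K_{\lambda+q_{\max}\alpha}=K_\lambda$, the desired contradiction.

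The main obstacle will be the subcase $\alpha\in\Delta^{'F}$ with $(\lambda,\alpha)\geq0$. Again assuming $\lambda+\alpha\in\supp(M)$, I would use $r_\alpha$-invariance of $\supp(M)$ (Lemma \ref{PS-Lemma5}(2)) to see that the $\alpha$-string $[k_{\min},k_{\max}]$ through $\lambda$ satisfies $k_{\min}+k_{\max}=-(\lambda,\alpha)\le 0$ with $k_{\max}\ge 1$, forcing $k_{\min}\le -1$. Setting $\lambda^+:=\lambda+k_{\max}\alpha$, one has $\alpha\in K_{\lambda^+}\setminus K_\lambda$, so to contradict extremality of $\lambda$ it suffices to prove $K_\lambda\subseteq K_{\lambda^+}$. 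For $\beta\in K_\lambda\setminus K_{\lambda^+}$, the $\alpha$-string $[k'_{\min},k'_{\max}]$ through $\lambda+\beta$ (an interval by Lemma \ref{shadow-2}(1)) misses $0$ but contains $k_{\max}\ge 1$, so $k'_{\min}\ge 1$; the $r_\alpha$-symmetry yields $k'_{\min}+k'_{\max}=-(\lambda+\beta,\alpha)\ge 2$, hence $(\beta,\alpha)\le -2-(\lambda,\alpha)\le -2$. The crucial arithmetic input is that roots in $\Delta'$ satisfy $|(\beta,\alpha)|\le 2$ with equality only when $\alpha\in\Delta''$ and $\beta=\pm\alpha$; hence $\beta=-\alpha$, and then $-\alpha\in K_\lambda$ gives $k_{\min}\ge 0$, contradicting $k_{\min}\le -1$.

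The hardest step will be coordinating the extremality hypothesis with the $r_\alpha$-symmetry and the sharp root-system bound $|(\beta,\alpha)|\le 2$ in the $\Delta^{'F}$ subcase; without the last ingredient the forced estimate $(\beta,\alpha)\le -2$ could not be converted into the specific identification $\beta=-\alpha$ that delivers the final contradiction with the sign of $k_{\min}$.
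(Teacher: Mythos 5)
Your proposal is correct. The forward inclusion and the subcase $\alpha\in\Delta^{'+}$ coincide with the paper's argument (boundedness from above, then iterated use of Lemma \ref{PS-Lemma5}(3) with $-\alpha\in\Gamma$ together with extremality), so the only genuine divergence is in the subcase $\alpha\in\Delta^{'F}$ with $(\lambda,\alpha)\ge 0$. There the paper compares $K_\lambda$ with $K_{\lambda+\alpha}$ one step up the string: for each $\delta\in\bar K_{\lambda+\alpha}$ with $\delta\ne-\alpha$ it computes $(\lambda+\alpha+\delta,\alpha)\ge(\lambda,\alpha)+1>0$ and uses local finiteness of the $\alpha$-copy of $\mathfrak{sl}_2$ to conclude $\lambda+\delta\in\supp(M)$, which gives $K_\lambda\subseteq K_{\lambda+\alpha}$ directly and is then iterated exactly as in the $\Delta^{'+}$ case. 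You instead jump to the top of the string, $\lambda^+=\lambda+k_{\max}\alpha$, and argue by contradiction on a hypothetical $\beta\in K_\lambda\setminus K_{\lambda^+}$, converting the $r_\alpha$-symmetry of the string through $\lambda+\beta$ into the estimate $(\beta,\alpha)\le-2$, hence $\beta=-\alpha$, which clashes with $k_{\min}\le-1$. Both routes rest on the same inputs --- $r_\alpha$-invariance of $\supp(M)$ from Lemma \ref{PS-Lemma5}(2), the interval property of strings from Lemma \ref{shadow-2}(1), the type-$A$ fact that $(\beta,\alpha)\ge-1$ unless $\beta=-\alpha$, and extremality --- so they are of comparable weight; the paper's version is a bit more economical since it never needs the endpoints $k_{\min},k_{\max}$ explicitly, while yours makes the string arithmetic and the role of the sharp bound $|(\beta,\alpha)|\le 2$ completely explicit, which is arguably easier to verify. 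One cosmetic remark: in your $\Delta^{'+}$ subcase the ``contradiction'' is really a direct proof, since the chain $K_\lambda=K_{\lambda+\alpha}=\cdots=K_{\lambda+q_{\max}\alpha}\ni\alpha$ delivers $\alpha\in K_\lambda$ without needing to assume its negation.
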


\begin{proof}Obviously, $(\Delta^{'I}\cup \Delta^{’-})\cap K_{\lambda}=\emptyset$. So $K_{\lambda}\subseteq\Delta^{’+}\sqcup \{\alpha\in \Delta^{’F}|(\lambda,\alpha)\ge 0\}$. We will prove that $\Delta^{’+}\sqcup \{\alpha\in \Delta^{’F}|(\lambda,\alpha)\ge 0\}\subset K_{\lambda}$.

Let us show first that $\Delta^{’+}\subset K_{\lambda}$. Let $\alpha\in \Delta^{’+}$. Choose $k\in \Z_+$ such that $\lambda+k\alpha\in \supp(M)$ but $\lambda+(k+1)\alpha\not\in \supp(M)$. From $\Delta^{’-}\subseteq \Gamma$, and Lemma \ref{PS-Lemma5} (3), $K_{\lambda}\subseteq K_{\lambda+k\alpha}$. Since $\lambda$ is extremal, $k=0$ and $\alpha\in K_{\lambda}$. It remains to show that $\{\alpha\in \Delta^{’F}|(\lambda,\alpha)\ge 0\}\subset K_{\lambda}$. Let $\alpha\in \Delta^{’F}$ with $(\lambda,\alpha)\ge 0$. Then for any $\delta\in \bar{K}_{\lambda+\alpha}$ with $\delta\ne -\alpha$, one has $(\lambda+\alpha+\delta,\alpha)=(\lambda,\alpha)+2+(\delta,\alpha)\ge (\lambda,\alpha)+1>0$. Therefore $\lambda+\alpha+\delta-\alpha=\lambda+\delta\in \supp(M)$ and $\delta\in \bar{K}_{\lambda}$. So we have proved that $K_{\lambda}\subset K_{\lambda+\alpha}$ since $-\alpha\in\bar{K}_{\lambda+\alpha}\cap\bar{K}_\lambda$.  Then by using the same argument as in the case when $\alpha\in \Delta^{’+}$, we have $\alpha\in K_{\lambda}$.
 \end{proof}

\begin{corollary}\cite[corollary 2]{PS}\label{PS-Corollary2}
If $\lambda\in\supp(M)$ is extremal, the decomposition (\ref{shadow}) is a shadow decomposition of $\Delta'$.
\end{corollary}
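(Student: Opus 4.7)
My plan is to show that the four-part decomposition already constructed and analyzed in Lemmas \ref{shadow-2}--\ref{PS-Lemma9} and Corollary \ref{PS-Corollary1} satisfies the axioms of a shadow decomposition in the sense of Penkov--Serganova. Concretely, what has to be verified is: (a) $\Delta^{'F}$ is a root subsystem of $\Delta''$; (b) $\Delta^{'F}\perp\Delta^{'I}$; (c) both $\Delta^{'+}$ and $\Delta^{'-}$ are closed under addition (whenever the sum lies in $\Delta'$) and are compatible with $\Delta^{'F}$ and $\Delta^{'I}$ in that adding an element of $\Delta^{'F}$ or $\Delta^{'I}$ to an element of $\Delta^{'\pm}$ keeps one inside $\Delta^{'\pm}$; and (d) this data fits together into a parabolic (generalized triangular) decomposition of $\Delta'$ whose Levi part absorbs $\Delta^{'F}\cup\Delta^{'I}$.

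Items (a) and (b) are precisely the content of Lemma \ref{PS-Lemma8}, and item (d) is exactly Corollary \ref{PS-Corollary1} applied to the extremal $\lambda$. For (c) I would argue as follows, using the extremal weight $\lambda$. For closure of $\Delta^{'+}$: if $\alpha,\beta\in\Delta^{'+}$ and $\alpha+\beta\in\Delta'$, then $\alpha,\beta\in K_\lambda$ by Lemma \ref{PS-Lemma9}, so $\alpha+\beta\in K_\lambda$ by Lemma \ref{PS-Lemma6}(1), giving $\alpha+\beta\notin\Gamma$; and $-\alpha,-\beta\in\Delta^{'-}\subseteq\Gamma$ by Lemma \ref{PS-Lemma8}(1), so $-(\alpha+\beta)\in\Gamma$ since $\Gamma$ is a monoid, whence $\alpha+\beta\in\Delta^{'+}$. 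Closure of $\Delta^{'-}$ follows by the symmetric argument applied to the restricted dual. For compatibility of $\Delta^{'\pm}$ with $\Delta^{'F}$: using the $r_\alpha$-invariance of $\supp(M)$ for $\alpha\in\Delta^{'F}$ from Lemma \ref{PS-Lemma5}(2), one transports $\lambda$ so that Lemma \ref{PS-Lemma9} again forces the resulting sum into the correct block. For compatibility with $\Delta^{'I}$: the orthogonality in Lemma \ref{PS-Lemma8}(3) together with the bi-unboundedness in $\Gamma$ (so $\pm\beta\in\Gamma$) means that translation by $\beta\in\Delta^{'I}$ preserves both $\Gamma$ and $\bar{K}_\mu$-conditions, pinning $\alpha+\beta$ into the same shadow class as $\alpha$.

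The main obstacle I anticipate is item (c), specifically the mixed compatibility axioms. Closure within $\Delta^{'+}$ is straightforward once the extremal $\lambda$ is fixed, but to show that $\alpha+\beta$ inherits the correct shadow type when $\alpha\in\Delta^{'\pm}$ and $\beta\in\Delta^{'F}\cup\Delta^{'I}$ one has to simultaneously control the bounded/unbounded nature of the line through $\lambda$ in direction $\alpha+\beta$. The trick is that one is allowed by Lemma \ref{shadow-2}(2)--(3) to change base point $\lambda$ freely inside $\supp(M)$ (extremality is preserved under translation by $-\Gamma$ by Lemma \ref{PS-Lemma5}(3)), so a well-chosen translate lets the $\mathfrak{sl}_2$-triple generated by $\beta$ act and Lemma \ref{sl2} then forces the line through $\alpha+\beta$ to behave in the same way as the line through $\alpha$. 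Once all these verifications are in place, the corollary is the mere assertion that the properties so assembled constitute a shadow decomposition.
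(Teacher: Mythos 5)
You should first note that the paper does not actually prove this corollary: it is quoted from \cite{PS}, and its entire content is the pair of displayed identities immediately following it, namely that for extremal $\lambda$ the parabolic decomposition of Corollary \ref{PS-Corollary1} satisfies $\prescript{}{\lambda}\Delta^{'\pm}=\Delta^{'\pm}\sqcup\{\alpha\in\Delta^{'F}\,|\,\pm(\lambda,\alpha)>0\}$ and $\prescript{}{\lambda}\Delta^{'0}=\Delta^{'I}\sqcup\{\alpha\in\Delta^{'F}\,|\,(\lambda,\alpha)=0\}$. These follow in one line by substituting Lemma \ref{PS-Lemma9} (which gives $K_\lambda=\Delta^{'+}\sqcup\{\alpha\in\Delta^{'F}\,|\,(\lambda,\alpha)\ge0\}$, hence $-K_\lambda=\Delta^{'-}\sqcup\{\alpha\in\Delta^{'F}\,|\,(\lambda,\alpha)\le0\}$) into the definitions in Corollary \ref{PS-Corollary1}. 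Your route --- verifying closure and compatibility axioms for the four blocks directly --- is a genuinely different and much longer path, and as written it has two problems.

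First, your item (d) misdescribes the structure: the Levi part $\prescript{}{\lambda}\Delta^{'0}$ does \emph{not} absorb $\Delta^{'F}\cup\Delta^{'I}$; only the part of $\Delta^{'F}$ orthogonal to $\lambda$ lies in the Levi, while the rest of $\Delta^{'F}$ is distributed into $\prescript{}{\lambda}\Delta^{'\pm}$ according to the sign of $(\lambda,\alpha)$. That distribution is precisely the content of Lemma \ref{PS-Lemma9} and is the essential point of the corollary, which your axiom list never engages with. Second, the mixed-compatibility verifications in (c), which you yourself flag as the crux, are left as sketches, and the mechanisms you invoke there (transporting $\lambda$ by reflections, letting the $\mathfrak{sl}_2$-triple of $\beta$ act) are not developed into arguments. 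If you do want to carry (c) out, the clean device is the monoid $\Gamma$ together with Lemma \ref{shadow-2}(3) and Lemma \ref{PS-Lemma8}: for instance, for $\alpha\in\Delta^{'+}$ and $\beta\in\Delta^{'F}$ with $\alpha+\beta\in\Delta'$, if $\alpha+\beta$ lay in $\Delta^{'-}\sqcup\Delta^{'I}\subseteq\Gamma$ then $\beta=(\alpha+\beta)+(-\alpha)\in\Gamma$ (since $-\alpha\in\Gamma$), contradicting $\beta\notin\Gamma$; and $\alpha+\beta\in\Delta^{'F}$ is excluded because $\Delta^{'F}$ is a root subsystem of $\Delta''$. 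Your argument for closure of $\Delta^{'\pm}$ via Lemmas \ref{PS-Lemma6} and \ref{PS-Lemma9} and the monoid property of $\Gamma$ is correct, but the proposal as a whole does not prove the steps it declares to be the hard ones, and it bypasses the identification that the corollary is actually asserting.
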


 %Set $S^{\pm}=\{i|\pm \epsilon_i\not\in \Gamma\}$ and $S=S^+\sqcup S^-$. $\lambda\in \supp(M)$ is called minimal if $\lambda$ is extremal and $\pm \epsilon_i\in K_{\lambda}$ for all $i\in S^{\pm}$.{\bf redundant?}

%\begin{lemma}\cite[Lemma 10]{PS} There exists a minimal $\lambda\in \supp(M)$.{\bf redundant?}
%\end{lemma}

%\begin{proof}It follows from Lemma \ref{PS-Lemma9}?. From Lemma \ref{PS-Lemma9}, any extremal point is minimal? \end{proof}

%Therefore, the decomposition (\ref{shadow}) is a shadow decomposition in the sense that for $\Delta_M^{'0}:=\Delta^{'I}_M\sqcup\Delta^{'F}_M$, $\Delta'=\Delta^{'+}_M\sqcup\Delta^{'0}\sqcup\Delta^{'-}$ is a parabolic decomposition and all roots of $\Delta^{'I}_M$ are orthogonal to all roots of $\Delta^{'F}_M$. Also f

For an extremal $\lambda\in \supp(M)$, from Corollary \ref{PS-Corollary1} and Lemma \ref{PS-Lemma9}, %which from Lemma \ref{shadow-2} are also a shadow decomposition of a nontrivial simple $W_m$ subquotient $V$ and an  extremal $\lambda\in \supp(V)$ ,
we have
\begin{equation}\begin{split}&\prescript{}{\lambda}\Delta^{'\pm}=\Delta^{'\pm}\sqcup \{\alpha\in \Delta^{'F}|\pm (\lambda,\alpha)>0\},\\
&\prescript{}{\lambda}\Delta^{'0}=\Delta^{'I}\sqcup \{\alpha\in \Delta^{'F}|(\lambda,\alpha)=0\}.\end{split} \end{equation}
And $\Delta'=\prescript{}{\lambda}\Delta^{'+}\sqcup\prescript{}{\lambda}\Delta^{'0}\sqcup \prescript{}{\lambda}\Delta^{'-}$ is a parabolic decomposition of  $\Delta'$. Then by \cite[Lemma 2]{PS}, we get a parabolic decomposition of $\Delta$: $\Delta=\prescript{}{\lambda}\Delta^+\sqcup \prescript{}{\lambda}\Delta^0\sqcup\prescript{}{\lambda}\Delta^-$, where
\begin{equation}\begin{split}&\prescript{}{\lambda}\Delta^0:=\Delta\cap \langle\prescript{}{\lambda}\Delta^{'0}\rangle_{\Z},\\
&\prescript{}{\lambda}\Delta^{+}:=(\Delta\cap \langle\prescript{}{\lambda}\Delta^{'+}\sqcup \prescript{}{\lambda}\Delta^{'0}\rangle_{\Z_+})\setminus \prescript{}{\lambda}\Delta^0,\\
&\prescript{}{\lambda}\Delta^-:=\Delta\setminus(\prescript{}{\lambda}\Delta^0\sqcup \prescript{}{\lambda}\Delta^+).\end{split} \end{equation}

For any triangular decomposition $(\prescript{}{\lambda}\Delta^{'F,0})^+\sqcup\prescript{}{\lambda}(\Delta^{'F,0})^-$ of $\prescript{}{\lambda}\Delta^{'F,0}=\{\alpha\in \Delta^{'F}|(\lambda,\alpha)=0\}$, $\Delta'=\prescript{\lambda}{}\Delta^{'+}\sqcup\Delta^{'I}\sqcup\prescript{\lambda}{}\Delta^{'-}$ with $\prescript{\lambda}{}\Delta^{'\pm}:=\prescript{}{\lambda}\Delta^{'\pm}\sqcup(\prescript{}{\lambda}\Delta^{'F,0})^\pm$, is a parabolic decomposition. Set
\begin{equation}\label{para-decom}
\begin{aligned}
\prescript{\lambda}{}\Delta^0&:=\Delta\cap\langle\Delta^{'I}\rangle_{\Z},\\
\prescript{\lambda}{}\Delta^+&:=\Delta\cap\langle\prescript{\lambda}{}\Delta^{'+}\sqcup\Delta^{'I}\rangle_{\Z_+}\setminus\prescript{\lambda}{}\Delta^0,\\
\prescript{\lambda}{}\Delta^-&:=\Delta\setminus(\prescript{\lambda}{}\Delta^+\sqcup\prescript{\lambda}{}\Delta^0).
\end{aligned}
\end{equation}
Then (\ref{para-decom}) is also a parabolic decomposition of $\Delta$ by \cite[Lemma 2]{PS}. Denote $\prescript{\lambda}{}{\mathfrak{p}}=\h\oplus\oplus_{\alpha\in\prescript{\lambda}{}\Delta^{0}\sqcup \prescript{\lambda}{}\Delta^{+}} W_{\alpha}$.

\begin{lemma}\label{deltazero}
Let $\lambda\in\supp(M)$ be extremal.
\begin{enumerate}
\item $\langle\{\alpha\in\Delta^{'F}\,|\,(\lambda,\alpha)=0\}\rangle_\Z\cap\Delta=\{\alpha\in\Delta^{'F}\,|\,(\lambda,\alpha)=0\}$.
\item $\prescript{}{\lambda}\Delta^0\subseteq\Delta\cap\langle\Delta^{'I}\rangle_\Z+\{\alpha\in\Delta^{'F}\,|\,(\lambda,\alpha)=0\}$.
%\item $\prescript{\lambda}{}\Delta^+\setminus\prescript{}{\lambda}\Delta^+=\Delta\cap\langle(\prescript{}{\lambda}\Delta^{'F,0})^+\sqcup\Delta^{'I}\rangle_{\Z_+}\setminus\prescript{\lambda}{}\Delta^0$.
\end{enumerate}
\end{lemma}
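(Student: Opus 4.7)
My plan is to set $S:=\{\alpha\in\Delta^{'F}\mid(\lambda,\alpha)=0\}$ and start with two preliminary observations. First, since $\Delta^{'F}$ is a root subsystem of $\Delta''$ by Lemma \ref{PS-Lemma8}(2), $S$ is itself closed under root addition in $\Delta''$: if $\alpha,\beta\in S$ with $\alpha+\beta\in\Delta''$, then $\alpha+\beta\in\Delta^{'F}$ and $(\lambda,\alpha+\beta)=0$, so $\alpha+\beta\in S$. Second, by Lemma \ref{PS-Lemma8}(3) the real spans $\langle\Delta^{'I}\rangle_\R$ and $\langle\Delta^{'F}\rangle_\R$ are orthogonal; in particular any element of $\langle\Delta^{'I}\rangle_\Z+\langle S\rangle_\Z$ admits a unique decomposition $\gamma+\tau$ with $\gamma\in\langle\Delta^{'I}\rangle_\Z$, $\tau\in\langle S\rangle_\Z$.

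For part (1), every $\alpha\in S\subseteq\Delta''$ has $\epsilon$-coordinate sum $0$, and inspecting (\ref{rootset}) shows that $\Delta''$ is exactly the set of roots in $\Delta$ of coordinate sum $0$; hence $\langle S\rangle_\Z\cap\Delta\subseteq\Delta''$. To upgrade this inclusion to $S$, I plan to introduce the graph $G$ on $\{1,\dots,m\}$ whose edges are the pairs $\{p,q\}$ with $\epsilon_p-\epsilon_q\in S$. Closure of $S$ under root addition (together with $-S=S$) then gives $\epsilon_i-\epsilon_j\in S$ for any two distinct vertices $i,j$ in the same connected component of $G$, by telescoping $\epsilon_i-\epsilon_j=\sum_{s=1}^r(\epsilon_{k_{s-1}}-\epsilon_{k_s})$ along a path $i=k_0,\dots,k_r=j$ and inducting on $r$. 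Since $\langle S\rangle_\Z$ is supported on the union of non-singleton components, any $\epsilon_i-\epsilon_j\in\langle S\rangle_\Z\cap\Delta''$ forces $i,j$ to lie in a common component, and thus lies in $S$.

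For part (2), I would sharpen Lemma \ref{PS-Lemma8}(3) using the type-$A$ structure of $\Delta''$: $\Delta^{'F}$ corresponds to a partition of $\{1,\dots,m\}$ into blocks $B_1,\dots,B_p$ (each of size $\geq 2$) and a singleton set $R=\{1,\dots,m\}\setminus\bigcup_l B_l$, with every root of $\Delta^{'F}$ supported on one of the $B_l$; the orthogonality then forces $\Delta^{'I}$ to be supported entirely on $R$. Writing $\beta\in\prescript{}{\lambda}\Delta^0$ uniquely as $\beta=\gamma+\tau$ with $\gamma\in\langle\Delta^{'I}\rangle_\Z$ supported on $R$ and $\tau=\sum_l\tau^{(l)}$ block-decomposed (each $\tau^{(l)}$ supported on $B_l$ with coordinate sum $0$), I run the case analysis from (\ref{rootset}). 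If $\beta=\sum s_j\epsilon_j$ with $s_j\geq 0$, non-negativity forces every $\tau^{(l)}=0$, so $\beta=\gamma\in\Delta\cap\langle\Delta^{'I}\rangle_\Z$. If $\beta=-\epsilon_i+\sum_{j\neq i}s_j\epsilon_j$ with $i\in B_{l_0}$, the coord-sum constraint on $B_{l_0}$ forces a single $s_{j_0}=1$ with $j_0\in B_{l_0}\setminus\{i\}$, so $\tau=\epsilon_{j_0}-\epsilon_i\in\langle S\rangle_\Z\cap\Delta''=S$ by (1), while $\gamma=\sum_{j\in R}s_j\epsilon_j$ is either $0$ or lies in $\Delta\cap\langle\Delta^{'I}\rangle_\Z$; the sub-case $i\in R$ reduces to the previous case. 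This yields the inclusion (with the understanding that either summand on the right-hand side of (2) may equal $0$).

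The main obstacle is the closed-subsystem identity $\langle S\rangle_\Z\cap\Delta''=S$ needed in part (1); this genuinely relies on the type-$A$ closure argument via the graph $G$, and once it is in hand the rest of the proof becomes bookkeeping against the orthogonal block decomposition of $\R^m$ determined by $B_1,\dots,B_p,R$.
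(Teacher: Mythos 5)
Your proof is correct and takes essentially the same route as the paper's: both rest on the closure of $S=\{\alpha\in\Delta^{'F}\,|\,(\lambda,\alpha)=0\}$ under root addition inside $\Delta''$ (you derive it from Lemma \ref{PS-Lemma8}(2), the paper invokes Lemma \ref{PS-Lemma6}), on the observation that $\Delta''$ is exactly the set of roots of coordinate sum zero, and on the disjoint-support decomposition forced by $(\Delta^{'F},\Delta^{'I})=0$, followed by a case analysis on the two shapes of roots in (\ref{rootset}). Your graph/block formulation of the type-$A$ closure in part (1) is just a tidier packaging of the paper's induction on the number of summands, and you correctly note (as the paper does implicitly) that one summand on the right-hand side of (2) may be zero.
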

\begin{proof}
(1) The right hand side is clearly contained in the left hand side. Clearly, we have $\Delta\cap\langle\Delta''\rangle_\Z=\Delta''$. Hence, for any $\alpha\in \langle\{\alpha\in\Delta^{'F}\,|\,(\lambda,\alpha)=0\}\rangle_\Z\cap\Delta$, we may assume $\alpha=\epsilon_i-\epsilon_j=\sum\limits_{s=1}^k\alpha_s$ with $\alpha_s\in\Delta^{'F}, (\lambda,\alpha_s)=0$. Then, there exists $s$, without loss of generality, we may assume $s=k$, such that $\alpha_k=\epsilon_i-\epsilon_l$. Therefore, $\sum\limits_{s=1}^{k-1}\alpha_s=\epsilon_l-\epsilon_j\in\Delta''$.  Then by induction on $k$ and Lemma \ref{PS-Lemma6}, we know that $\alpha\in\Delta^{'F}$.

(2) Any $\alpha\in\prescript{}{\lambda}\Delta^0$ can be written as $\alpha_1+\alpha_2$, where $\alpha_1\in\langle\Delta^{'I}\rangle_\Z, \alpha_2\in\langle\{\alpha\in\Delta^{'F}\,|\,(\lambda,\alpha)=0\}\rangle_\Z$. If $\alpha_1=0$ or $\alpha_2=0$, then by (1) the statement is clear.  Now suppose $\alpha_i\neq0, i=1,2$. Note that if $\epsilon_i-\epsilon_j\in\Delta^{'F}$, then $\pm\epsilon_i,\pm\epsilon_j,\pm(\epsilon_i-\epsilon_k),\pm(\epsilon_j-\epsilon_k)\notin\Delta^{'I}$ for all $k\neq i,j$, since $(\Delta^{'I},\Delta^{'F})=0$. So we may assume that $\alpha_2=\sum_{i,j\in C} a_{ij}(\epsilon_i-\epsilon_j),\alpha_1=\sum_{k\notin C} b_k\epsilon_k$ for some $C\subseteq\overline{1,m}$.  Then from (\ref{rootset}) we have $\alpha_1+\alpha_2=\sum_{i\neq l}m_i\epsilon_i-\epsilon_l$. Therefore, $\alpha_2=\epsilon_k-\epsilon_l$ and  $\alpha_1=\sum_{i\neq l,k} m_i\epsilon_i$ with $m_i\in\Z_+$. Therefore, $\alpha_1,\alpha_2\in\Delta$. Hence the statement follows from (1).
\end{proof}

\begin{theorem}\cite[Lemma 11]{PS}\label{extremal} One can find an extremal $\lambda\in \supp(M)$ such that $\lambda+\alpha\not\in \supp(M)$ for any $\alpha\in \prescript{\lambda}{}\Delta^{+}$. Moreover, $M$ is  the unique simple quotient of the induced module ${\rm Ind}_{U(\prescript{\lambda}{}{\mathfrak{p}})}^{U(W)} N$ for a simple weight $\prescript{\lambda}{}{\mathfrak{p}}$ module $N$ with $\supp(N)=\lambda+\langle\Delta^{'I}\rangle_{\Z}$. Furthermore, $N$ is a simple bounded module over the Levi subalgebra $\hat{\mathfrak{k}}$ of $\prescript{\lambda}{}{\mathfrak{p}}$ such that $\dim N_\mu=\dim N_\nu, \forall \mu,\nu\in\supp(N)$.
\end{theorem}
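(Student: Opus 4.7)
The plan is to adapt the strategy of \cite[Lemma 11]{PS} to $W_{m,n}$, proceeding in three main stages, with the first being the most delicate.

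The first stage produces an extremal $\lambda\in\supp(M)$ together with a triangular decomposition of $\prescript{}{\lambda}\Delta^{'F,0}$ such that the parabolic data (\ref{para-decom}) attached to $\lambda$ satisfies $\lambda+\alpha\notin\supp(M)$ for every $\alpha\in\prescript{\lambda}{}\Delta^+$. Following PS, I would first produce, via an iterative procedure relying on Lemma \ref{PS-Lemma5}(3) and Harish-Chandra finiteness, a ``minimal'' extremal point $\lambda$ in the sense that $\pm\epsilon_i\in K_\lambda$ whenever $\pm\epsilon_i\notin\Gamma$; starting from any extremal $\mu_0$, if some $\epsilon_i\in\bar K_{\mu_0}$ with $\epsilon_i\notin\Gamma$ one translates $\mu_0$ to the upper bound of the corresponding $\mathfrak{sl}_2(\epsilon_i)$-orbit and iterates over all such $i$, invoking the symmetric argument for $-\epsilon_j$, $j\in S^-$. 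The shadow structure provided by Corollary \ref{PS-Corollary1} and Lemma \ref{PS-Lemma9} then constrains the parabolic decomposition attached to $\lambda$, and a careful choice of triangular decomposition of $\prescript{}{\lambda}\Delta^{'F,0}$, combined with Lemma \ref{deltazero}, guarantees the desired vanishing condition for every $\alpha\in\prescript{\lambda}{}\Delta^+$.

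Given such $\lambda$, the second stage invokes Lemma \ref{unique-quotient} directly: the decomposition $\Delta=\prescript{\lambda}{}\Delta^+\sqcup\prescript{\lambda}{}\Delta^0\sqcup\prescript{\lambda}{}\Delta^-$ together with the property $\lambda+\alpha\notin\supp(M)$ for all $\alpha\in\prescript{\lambda}{}\Delta^+$ realises $M$ as the unique simple quotient of $\mathrm{Ind}_{\prescript{\lambda}{}{\mathfrak{p}}}^{W}N$, with $N:=U(\prescript{\lambda}{}{\mathfrak{p}})M_\lambda$ a simple $\prescript{\lambda}{}{\mathfrak{p}}$ module. Because $\prescript{\lambda}{}{\mathfrak{p}}^+$ annihilates $M_\lambda$, in fact $N=U(\hat{\mathfrak{k}})M_\lambda$, and $N$ is automatically simple over $\hat{\mathfrak{k}}$, since any $\hat{\mathfrak{k}}$-submodule is annihilated by $\prescript{\lambda}{}{\mathfrak{p}}^+$ and is therefore $\prescript{\lambda}{}{\mathfrak{p}}$-stable.

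For the third stage, the containment $\supp(N)\subseteq\lambda+\langle\prescript{\lambda}{}\Delta^0\rangle_\Z=\lambda+\langle\Delta^{'I}\rangle_\Z$ is immediate from the root structure of $\hat{\mathfrak{k}}$ in (\ref{para-decom}). For the reverse inclusion, each $\alpha\in\Delta^{'I}$ has both $\pm\alpha\in\Gamma$, so Lemma \ref{sl2}(1) shows that $e_{\pm\alpha}$ act bijectively between the weight spaces of $M$, hence also on the invariant subspace $N\subseteq M$; this places $\lambda+k\alpha\in\supp(N)$ for every $k\in\Z$, and iterating over $\alpha\in\Delta^{'I}$ fills out $\lambda+\langle\Delta^{'I}\rangle_\Z\subseteq\supp(N)$. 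The same Lemma \ref{sl2}(1) forces $\dim N_\mu=\dim N_{\mu+k\alpha}$ for every $\mu\in\supp(N)$, $\alpha\in\Delta^{'I}$, and $k\in\Z$, and since $\supp(N)$ is a single $\langle\Delta^{'I}\rangle_\Z$-orbit, every weight multiplicity equals the finite number $\dim M_\lambda$; in particular $N$ is a bounded $\hat{\mathfrak{k}}$-module with uniform weight multiplicity.

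The principal obstacle is the first stage: simultaneously arranging $\lambda$ and the triangular refinement of $\prescript{}{\lambda}\Delta^{'F,0}$ so that every positive root satisfies $\lambda+\alpha\notin\supp(M)$, including the compound roots $\alpha=\beta+\gamma\in\Delta\setminus\Delta'$ with $\beta\in\prescript{\lambda}{}\Delta^{'+}$ and $\gamma\in\langle\Delta^{'I}\rangle_\Z$, and the subtle roots $\alpha\in\Delta^{'F}$ with $(\lambda,\alpha)=0$ and $\lambda\pm\alpha\in\supp(M)$ for which neither orientation of the refinement immediately works. Lemma \ref{deltazero} is the main tool for reducing compound roots to their $\Delta'$-components via the explicit form \eqref{rootset} of $\Delta$, while the minimality construction of $\lambda$ is what rules out the exceptional $\Delta^{'F}$-roots.
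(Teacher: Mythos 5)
Your stages two and three track the paper's argument (Lemma \ref{unique-quotient}, the PBW theorem, and Lemma \ref{sl2} for the uniform weight multiplicities), but stage one --- which you yourself label ``the principal obstacle'' --- is where the entire content of the theorem sits, and your proposal asserts rather than proves it. Note first that the paper does \emph{not} use the PS-style minimal point: it chooses $\lambda$ extremal with $K^1_\lambda=\{\epsilon_i+\epsilon_j-\epsilon_k \mid k\neq i,j,\ \lambda+\epsilon_i+\epsilon_j-\epsilon_k\notin\supp(M)\}$ maximal, and the decisive case is $\alpha=\alpha_0+\sum_{j\notin S^-}m_j\epsilon_j\in\prescript{}{\lambda}\Delta^+$ with $\alpha_0\in\Delta''\cap\prescript{}{\lambda}\Delta^{'0}$ and $m_i>0$ for some $i\in S^+$: assuming $\lambda+\alpha\in\supp(M)$ one descends along $\Gamma$ to $\lambda+\alpha_0,\lambda+\alpha_0+\epsilon_i\in\supp(M)$, deduces $\alpha_0\in\Delta^{'I}$, passes to the extremal point $\mu=\lambda-\alpha_0$ and contradicts maximality via $\alpha_0+\epsilon_i\in K^1_\mu\setminus K^1_\lambda$. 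Your minimality route could also be made to work here (descend further using $-\alpha_0\in\Gamma$ to get $\lambda+\epsilon_i\in\supp(M)$ and contradict $\epsilon_i\in K_\lambda$), but you never write down this verification, nor the existence argument for a minimal extremal point (itself the iterative content of \cite[Lemma 10]{PS}); saying that minimality plus ``a careful choice'' guarantees the vanishing is exactly the step that needs a proof.

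Second, your diagnosis of where the subtlety lies is partly wrong, which suggests the gap is not merely expository. The roots $\alpha\in\Delta^{'F}$ with $(\lambda,\alpha)=0$ are not exceptional and require no careful orientation of the triangular refinement: Lemma \ref{PS-Lemma9} gives $\{\alpha\in\Delta^{'F}\mid(\lambda,\alpha)\geq 0\}\subseteq K_\lambda$ for \emph{any} extremal $\lambda$, so both $\lambda+\alpha$ and $\lambda-\alpha$ lie outside $\supp(M)$ when $(\lambda,\alpha)=0$, and any triangular decomposition of $\prescript{}{\lambda}\Delta^{'F,0}$ works; combined with Lemma \ref{deltazero} this disposes of all of $\prescript{\lambda}{}\Delta^+\setminus\prescript{}{\lambda}\Delta^+$ exactly as in the paper. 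The genuinely hard roots are those with a nonzero component $\alpha_0\in\Delta''\cap\prescript{}{\lambda}\Delta^{'0}$ together with a positive $\epsilon_i$-coefficient for some $i\in S^+$, i.e.\ precisely the case treated above, and it is that case your outline leaves untreated.
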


\begin{proof}
Set $S^\pm:=\{i\,|\,\pm\epsilon_i\notin\Gamma\}$. Let $\lambda\in\supp(M)$ be extremal with
\[
K^1_\lambda:=\{\beta=\epsilon_i+\epsilon_j-\epsilon_k\,|\,k\neq i,j,\lambda+\beta\notin\supp(M)\}
\]
maximal. Any $\alpha\in\prescript{}{\lambda}\Delta^+$ can be written as $\alpha=\alpha_0+\sum_{j\notin S^-}m_j\epsilon_j$, where $\alpha_0=\Delta''\cap(\prescript{}{\lambda}\Delta^{'0}\sqcup\prescript{}{\lambda}\Delta^{'+})$ or $\alpha_0=0$. Clearly, we have $\lambda+\alpha\notin\supp(M)$ whenever $\alpha_0=0$ or $\alpha_0\in\prescript{}{\lambda}\Delta^{'+}$, since $-\epsilon_j\in\Gamma, \forall j\notin S^-$. Now suppose $\alpha_0=\epsilon_j-\epsilon_k\in\Delta''\cap\prescript{}{\lambda}\Delta^{'0}$. In this case $m_i>0$ for at least one $i\in S^+$. If $\lambda+\alpha\in\supp(M)$, then $\lambda+\alpha_0+\epsilon_i,\lambda+\alpha_0\in\supp(M)$. So $\alpha_0\in\bar{K}_\lambda$ and hence $\alpha_0\in\Delta^{'I}$. Thus, $\mu=\lambda-\alpha_0\in\supp(M)$ is extremal and $\alpha_0\in\Gamma, K^1_\lambda=K^1_\mu$. However, we have $\alpha_0+\epsilon_i\in K^1_\mu\setminus K^1_\lambda$, a contradiction. Thus, we have $\lambda+\alpha\notin\supp(M), \forall \alpha\in\prescript{}{\lambda}\Delta^+$.

To finish the proof of the first statement, it remains to show $\lambda+\alpha\notin\supp(M)$ for $\alpha\in\prescript{\lambda}{}\Delta^+\setminus\prescript{}{\lambda}\Delta^+=\Delta\cap\langle(\prescript{}{\lambda}\Delta^{'F,0})^+\sqcup\Delta^{'I}\rangle_{\Z_+}\setminus\prescript{\lambda}{}\Delta^0$, which can be written as $\alpha_1+\alpha_2$ with $\alpha_1\in\langle\Delta^{'I}\rangle_\Z, \alpha_2\in(\prescript{}{\lambda}\Delta^{'F,0})^+$ from Lemma \ref{deltazero}. If $\lambda+\alpha\in\supp(M)$, then $\lambda+\alpha_2\in\supp(M)$, a contradiction to Lemma \ref{PS-Lemma9}. Thus, $\lambda+\alpha\notin\supp(M)$ for all $\alpha\in\prescript{\lambda}{}\Delta^+$. Then Lemma \ref{unique-quotient} tells us that $M$ is  the unique simple quotient of the induced module ${\rm Ind}_{U(\prescript{\lambda}{}{\mathfrak{p}})}^{U(W)} N$ for a simple weight $\prescript{\lambda}{}{\mathfrak{p}}$ module $N=U(\prescript{\lambda}{}{\mathfrak{p}})M_\lambda$. Also, from the PBW theorem, we get $\supp(N)=\lambda+\langle\Delta^{'I}\rangle_\Z$.

For the last statement, from Lemma \ref{sl2}, we know that $e_\alpha: M_\lambda\to M_{\lambda+\alpha}$ with $\alpha\in\Delta^{'I}\cap\Delta''$ or $\alpha=-\epsilon_i\in\Delta^{'I}$ is injective. Also, $e_{-\epsilon_i}: (M_*)_{-\lambda+\epsilon_i}\to (M_*)_{-\lambda}$ is injective. Here, $M_*$ is the restricted dual of $M$. Thus, the last statement follows.
\end{proof}

\begin{theorem}\label{levi}
Let $\hat{\mathfrak{k}}$ be as in Theorem \ref{extremal}. Then $\hat{\mathfrak{k}}\cong W_{q,n}\ltimes(\mathfrak{k}\otimes A_{q,n})$ for some $q\leq m$ and some Levi subalgebra $\mathfrak{k}$ of $\mathfrak{gl}_{m-q}$.
\end{theorem}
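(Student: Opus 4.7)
The plan is to describe $\prescript{\lambda}{}\Delta^0=\Delta\cap\langle\Delta^{'I}\rangle_\Z$ explicitly and then identify $\hat{\mathfrak{k}}=\h\oplus\bigoplus_{\alpha\in\prescript{\lambda}{}\Delta^0}W_\alpha$ with the claimed semidirect product. The starting observation is Lemma \ref{PS-Lemma5}(1): for every $i\in\overline{1,m}$ at least one of $\pm\epsilon_i$ lies in $\Gamma$. Set
\[
J:=\{i\in\overline{1,m}\mid\epsilon_i,-\epsilon_i\in\Gamma\},\qquad q:=|J|,
\]
so that $\pm\epsilon_i\in\Delta^{'I}$ precisely when $i\in J$.

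I would next determine which roots of the form $\pm(\epsilon_i-\epsilon_j)$ lie in $\Delta^{'I}$. Since $\Gamma$ is a $\Z_+$-monoid, whenever $i,j\in J$ with $i\neq j$ one has $\epsilon_i-\epsilon_j=\epsilon_i+(-\epsilon_j)\in\Gamma$ and likewise $\epsilon_j-\epsilon_i\in\Gamma$, so $\pm(\epsilon_i-\epsilon_j)\in\Delta^{'I}$. Conversely, if $\epsilon_i-\epsilon_j\in\Delta^{'I}$ and $j\in J$, then $\epsilon_i=(\epsilon_i-\epsilon_j)+\epsilon_j\in\Gamma$ and $-\epsilon_i=(\epsilon_j-\epsilon_i)+(-\epsilon_j)\in\Gamma$, forcing $i\in J$; hence no root $\pm(\epsilon_i-\epsilon_j)$ with exactly one index in $J$ lies in $\Delta^{'I}$. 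On the complement $\overline{1,m}\setminus J$, the relation $i\sim j\iff\pm(\epsilon_i-\epsilon_j)\in\Delta^{'I}$ is an equivalence (symmetry by negation-closure of $\Delta^{'I}$; transitivity via $\epsilon_i-\epsilon_k=(\epsilon_i-\epsilon_j)+(\epsilon_j-\epsilon_k)\in\Gamma$). Let $L_1\sqcup\cdots\sqcup L_r$ be the resulting partition of $\overline{1,m}\setminus J$. One then reads off
\[
\langle\Delta^{'I}\rangle_\Z=\Z^J\oplus\bigoplus_{k=1}^r\Big\{a\in\Z^{L_k}\,\Big|\,\sum_{i\in L_k}a_i=0\Big\},
\]
and intersecting with the explicit form \eqref{rootset} of $\Delta$ exhibits $\prescript{\lambda}{}\Delta^0$ as the union of roots supported in the $J$-directions (which are exactly the roots of $W_{q,0}$) and roots of the form $\sum_{i\in J}a_i\epsilon_i+\epsilon_{i_1}-\epsilon_{i_0}$ with $a_i\in\Z_+$ and $i_0\neq i_1$ in a common $L_k$.

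Set $\mathfrak{k}:=\bigoplus_{k=1}^r\gl(L_k)$, a Levi subalgebra of $\gl(\overline{1,m}\setminus J)\cong\gl_{m-q}$. Define
\[
\Phi\colon W_{q,n}\ltimes\big(\mathfrak{k}\otimes A_{q,n}\big)\longrightarrow\hat{\mathfrak{k}}
\]
by embedding $W_{q,n}$ as the vector fields in $W$ involving only $\{t_i:i\in J\}$ and the $\xi_j$'s, and by sending $E_{i_1,i_0}\otimes f$ (with $i_0,i_1\in L_k$ and $f\in A_{q,n}$) to $f\cdot t_{i_1}\partial_{i_0}\in W$. The description of $\prescript{\lambda}{}\Delta^0$ just obtained shows $\Phi$ lands in $\hat{\mathfrak{k}}$ and is a linear bijection on each root space. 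To see $\Phi$ is a Lie superalgebra homomorphism it suffices to match three brackets in $W$: the bracket within the image of $W_{q,n}$ is tautological; for $X\in W_{q,n}$ one has $[X,f\cdot t_{i_1}\partial_{i_0}]=X(f)\cdot t_{i_1}\partial_{i_0}$, since $X$ neither multiplies by nor differentiates with respect to any $t_i$ with $i\notin J$, matching the semidirect action of $W_{q,n}$ on $A_{q,n}$; and
\[
[ft_{i_1}\partial_{i_0},\,gt_{j_1}\partial_{j_0}]=fg\cdot(\delta_{i_0,j_1}t_{i_1}\partial_{j_0}-\delta_{i_1,j_0}t_{j_1}\partial_{i_0}),
\]
which matches the current-algebra bracket on $\mathfrak{k}\otimes A_{q,n}$ and vanishes between distinct blocks because their index sets are disjoint.

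The main obstacle is the characterization of $\Delta^{'I}$ in the second paragraph: the monoidal closure of $\Gamma$ combined with the shape \eqref{rootset} of $\Delta$ is what forces both the dichotomy between $J$ and its complement and the equivalence-class structure on $\overline{1,m}\setminus J$. Once these are in place, the construction of $\Phi$ and the bracket checks are routine super-sign computations; no further representation-theoretic input beyond Lemma \ref{PS-Lemma5}(1) is needed.
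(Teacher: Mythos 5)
Your argument is correct, and at bottom it rests on the same structural facts as the paper's proof; the difference is in how those facts are obtained. The paper first observes that $\Delta(\hat{\mathfrak{k}})=\Delta(\hat{\mathfrak{k}}\cap W_{m,0})$, quotes \cite[Lemma 12]{PS} to get $\hat{\mathfrak{k}}\cap W_{m,0}\cong W_{q,0}\ltimes(\mathfrak{k}\otimes A_{q,0})$, and then only describes how each root space of $W_{m,n}$ enlarges the corresponding root space of $W_{m,0}$ by odd factors. You instead reprove the content of \cite[Lemma 12]{PS} from scratch: the dichotomy $J$ versus its complement via Lemma \ref{PS-Lemma5}(1), the equivalence classes $L_k$ coming from the additive closure of $\Gamma$, the resulting computation of $\langle\Delta^{'I}\rangle_\Z$ and of $\Delta\cap\langle\Delta^{'I}\rangle_\Z$ against the explicit shape \eqref{rootset} of $\Delta$, and then an explicit isomorphism $\Phi$ with the bracket checks done directly in the super setting. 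What your version buys is self-containedness and an explicit map (useful later when one actually works with $\hat{\mathfrak{k}}$-modules); what the paper's version buys is brevity. One small point you should spell out when asserting that $\Phi$ is a bijection on root spaces: for a root $\beta\in\Z_+^J$ supported on the $J$-directions, the root space $W_\beta$ of $W_{m,n}$ is strictly larger than $(W_{q,n})_\beta$ — it also contains $t^\beta\xi_I d_i$ for $i\notin J$ — and these extra elements are exactly the images under $\Phi$ of the Cartan part $\bigoplus_k\bigl(\bigoplus_{i\in L_k}\C E_{i,i}\bigr)\otimes t^\beta\Lambda(n)$ of $\mathfrak{k}\otimes A_{q,n}$, i.e.\ the case $i_0=i_1$ of your formula. (Both you and the paper also tacitly include the full zero-$\h$-weight space of $W_{m,n}$, e.g.\ the elements $\xi_I\partial_{\xi_j}$, in $\hat{\mathfrak{k}}$, which the literal formula $\h\oplus\bigoplus_{\alpha\in\prescript{\lambda}{}\Delta^0}W_\alpha$ does not capture when $n>0$; this is needed for the statement to hold as written.)
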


\begin{proof}
From the definition of $\prescript{\lambda}{}{\mathfrak{p}}$ and $\Delta(W_m)=\Delta$, we have $\Delta(\hat{\mathfrak{k}})=\Delta(\hat{\mathfrak{k}}\cap W_m)$. And hence $\hat{\mathfrak{k}}\cap W_m$ is isomorphic to $W_q\ltimes(\mathfrak{k}\otimes A_{q,0})$ (see \cite[Lemma 12]{PS}) for some $q$ and some Levi subalgebra $\mathfrak{k}$ of $\mathfrak{gl}_{m-q}$. Clearly, for any $\gamma\in\Delta(W_q)\cup\{0\}$, $W_{\gamma}=\h\otimes t^\gamma A_{0,n}+\sum\limits_{i=1}^n\xi_i\partial_{\xi_i}t^\gamma A_{0,n}\subseteq W_{q,n}+\h\otimes A_{q,n}$, so $\sum_{\alpha\in\Delta(W_q)\cup\{0\}}W_\alpha= W_{q,n}+\h\otimes A_{0,n}$.  Since $\mathfrak{k}$ is a Levi subalgebra of $\mathfrak{gl}_{m-q}$, any $\alpha\in\Delta(\mathfrak{k})$ is of the form $\epsilon_i-\epsilon_j$ if $q<m-1$. And  $W_{\epsilon_i-\epsilon_j+\gamma}=t_i t^\gamma A_{0,n}\partial_j$ when $q<m-1$.  If $q\ge m-1$, then $\Delta(\mathfrak{k})=\varnothing$. Thus, lemma follows.
\end{proof}

\section{Simple modules in $\mathscr{B}_{\hat{\h}}(\hat{\mathfrak{k}})$}

Let $\hat{\mathfrak{k}}=W_{q,n}\ltimes(\mathfrak{k}\otimes A_{q,n})$, where $(q,n)\neq(0,0)$ and $\mathfrak{k}$ a Levi subalgebra in $\mathfrak{gl}_{m-q}$.  Let $\hat{\h}=\h_{q}\oplus\h'$ where $\h'$ is the Cartan subalgebra of $\mathfrak{k}$. The category $\mathscr{B}_{\hat{\h}}(\hat{\mathfrak{k}})$ is defined similarly.  In this section, we will classify all simple modules $N\in\mathscr{B}_{\hat{\h}}(\hat{\mathfrak{k}})$. If $(\mathfrak{k}\otimes A_{q,n})N=0$, then $N$ is a simple module in $\mathscr{B}_{\h_q}(W_{q,n})$, which are classified in \cite{LX}.

\begin{lemma}\cite[Theorem 4.4]{LX}\label{zero}
Let $N\in\mathscr{B}_{\hat{\h}}(\hat{\mathfrak{k}})$ be simple and nontrivial. If $(\mathfrak{k}\otimes A_{q,n})N=0$, then $N$ is a simple quotient of $F(P,V)$ for some simple $P\in\mathscr{W}_{\fH_{q,n}}(\mathcal{K}_{q,n})$, some finite dimensional simple $\gl_{q,n}$ module $V$.
\end{lemma}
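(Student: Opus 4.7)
The plan is to reduce the claim to the existing classification of simple bounded weight $W_{q,n}$-modules given by \cite[Theorem 4.4]{LX}. The hypothesis $(\mathfrak{k}\otimes A_{q,n})N=0$ asserts that the ideal $\mathfrak{k}\otimes A_{q,n}$ of $\hat{\mathfrak{k}}=W_{q,n}\ltimes(\mathfrak{k}\otimes A_{q,n})$ annihilates $N$, so $N$ descends to a simple module over the quotient $\hat{\mathfrak{k}}/(\mathfrak{k}\otimes A_{q,n})\cong W_{q,n}$.

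Before invoking the cited theorem I would check that the weight-theoretic data also transfers correctly. Since $\h'\subseteq\mathfrak{k}\otimes 1\subseteq\mathfrak{k}\otimes A_{q,n}$, the subalgebra $\h'$ itself acts as zero on $N$, so every $\hat{\h}$-weight of $N$ is of the form $\lambda+0$ for some $\lambda\in\h_q^*$. Hence the $\hat{\h}$-weight decomposition of $N$ coincides with its $\h_q$-weight decomposition, and the uniform boundedness of weight-space dimensions is preserved. Consequently, $N$ becomes a simple nontrivial object of $\mathscr{B}_{\h_q}(W_{q,n})$.

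At this point \cite[Theorem 4.4]{LX} applies directly: every simple nontrivial module in $\mathscr{B}_{\h_q}(W_{q,n})$ arises as a simple quotient of a tensor module $F(P,V)$ for some simple weight $\mathcal{K}_{q,n}$-module $P$ and some finite-dimensional simple $\gl_{q,n}$-module $V$. Pulling this structure back along the quotient map $\hat{\mathfrak{k}}\twoheadrightarrow W_{q,n}$ — under which $\mathfrak{k}\otimes A_{q,n}$ acts by zero — yields exactly the stated presentation of $N$ as a $\hat{\mathfrak{k}}$-module.

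The main obstacle is entirely absorbed into the previously established classification in \cite{LX}; once that is granted, the present lemma is a short bookkeeping step verifying that the annihilation of $\mathfrak{k}\otimes A_{q,n}$ forces both a factorization through $W_{q,n}$ and a collapse of the weight theory from $\hat{\h}$ to $\h_q$. The substantive work in Section 4 will instead appear in the complementary case $(\mathfrak{k}\otimes A_{q,n})N\neq 0$, which this lemma deliberately isolates.
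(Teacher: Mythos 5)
Your proposal is correct and matches the paper's treatment: the paper likewise observes that since $\mathfrak{k}\otimes A_{q,n}$ is an ideal of $\hat{\mathfrak{k}}$ annihilating $N$, the module descends to a simple nontrivial object of $\mathscr{B}_{\h_q}(W_{q,n})$, and then the classification of \cite[Theorem 4.4]{LX} is invoked verbatim. Your extra remark that $\h'\subseteq\mathfrak{k}\otimes 1$ acts by zero, so the $\hat{\h}$-weight theory collapses to the $\h_q$-weight theory, is a harmless and correct elaboration of a point the paper leaves implicit.
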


Now suppose $(\mathfrak{k}\otimes A_{q,n})N\neq0$, then $N=(\mathfrak{k}\otimes A_{q,n})N$.
An $A_{q,n}\hat{\mathfrak{k}}$ module is a $W_{q,n}\ltimes(\mathfrak{k}\otimes A_{q,n}\oplus A_{q,n})$ module with $A_{q,n}$ acting associatively. Consider the adjoint action of $\hat{\mathfrak{k}}$ on $\mathfrak{k}\otimes A_{q,n}$, then $(\mathfrak{k}\otimes A_{q,n})\otimes N$ is an $A_{q,n}\hat{\mathfrak{k}}$ module by
\[
f\cdot(x\otimes g\otimes v):=x\otimes fg\otimes v, \forall f,g\in A_{q,n}, x\in\mathfrak{k}, v\in N.
\]
Consider the $\hat{\mathfrak{k}}$ module epimorphism $\theta: (\mathfrak{k}\otimes A_{q,n})\otimes N\to N$ defined by $\theta(x\otimes g\otimes v):=(x\otimes g)v$. Then $X(N)=\{u\in\ker\theta\,|\,A_{q,n}\cdot u\subseteq\ker\theta\}$ is an $A\hat{\mathfrak{k}}$ submodule of $(\mathfrak{k}\otimes A_{q,n})\otimes N$. The $A_{q,n}\hat{\mathfrak{k}}$ module $\hat{N}:=\big((\mathfrak{k}\otimes A_{q,n})\otimes N\big)/X(N)$ is called the $A_{q,n}$-cover of $N$ and $\theta$ induces a $\hat{\mathfrak{k}}$ module epimorphism $\hat{\theta}: \hat{N}\to N$.

Recall from \cite[Lemma 4.2]{LX}, for $q>0$, any module $N$ in $\mathscr{B}_{\fH_{q,n}}(W_{q,n})$  with $\dim N_\lambda\leq c, \forall \lambda\in\supp(N)$,  is annihilated by $\omega_{\alpha,\beta,I,J}^{r,j,\partial,\partial'}$ for some $r\in\N$, where \begin{multline*}
\omega_{\alpha,\beta,I,J}^{r,j,\partial,\partial'}:=\sum\limits_{i=0}^r(-1)^i\binom{r}{i}t^{\alpha+(r-i)e_j}\xi_I\partial\cdot t^{\beta+ie_j}\xi_J\partial',\\
\alpha,\beta\in\Z_+^q, I,J\subseteq\overline{1,n},  j\in\overline{1,q}, \partial,\partial'\in\{\partial_i\,|\,i\in\overline{1,q}\cup\overline{m+1,m+n}\}.\end{multline*}

For any nonzero $v\in N_\lambda$, the $W_{q,n}$ submodule generated by $v$ is bounded by $c$, and is hence annihilated by $\omega_{\alpha,\beta,I,J}^{r,j,\partial,\partial'}$ for some $r$. Hence, $N$ is annihilated by $\omega_{\alpha,\beta,I,J}^{r,j,\partial,\partial'}$ for some $r$. Thus, on $N$, for any $j\in\overline{1,q}$ and $x\in\mathfrak{k}$, we have
\begin{eqnarray*}
0&=&\sum\limits_{k=0}^1\sum\limits_{i=0}^2(-1)^{i+k}\binom{2}{i}[\omega_{\alpha+ke_j,\beta+ie_j,\emptyset,\emptyset}^{r,j,\partial_j,\partial_j},x\otimes t^{\gamma+(3-i-k)e_j}\xi_I]\\
&=&\sum\limits_{k=0}^1\sum\limits_{i=0}^2\sum\limits_{l=0}^r(-1)^{i+k+l}\binom{2}{i}\binom{r}{l}[t^{\alpha+(r+k-l)e_j}\partial_j\cdot t^{\beta+(i+l)e_j}\partial_j, x\otimes t^{\gamma+(3-i-k)e_j}\xi_I]\\
&=&\sum\limits_{k=0}^1\sum\limits_{i=0}^2\sum\limits_{l=0}^r(-1)^{i+k+l}\binom{2}{i}\binom{r}{l}\Big((\gamma_j+3-i-k)x\otimes t^{\alpha+\gamma+(r-i-l+2)e_j}\xi_I\cdot t^{\beta+(i+l)e_j}\partial_j\\
&&+(\gamma_j+3-i-k) t^{\alpha+(r+k-l)e_j}\partial_j\cdot x\otimes t^{\beta+\gamma+(2+l-k)e_j}\xi_I\Big)\\
&=&\sum\limits_{i=0}^2\sum\limits_{l=0}^r(-1)^{i+l}\binom{2}{i}\binom{r}{l}x\otimes t^{\alpha+\gamma+(r-i-l+2)e_j}\xi_I\cdot t^{\beta+(i+l)e_j}\partial_j\\
&=&\sum\limits_{i=0}^{r+2}(-1)^i\binom{r+2}{i}x\otimes t^{\alpha+\gamma+(r+2-i)e_j}\xi_I\cdot t^{\beta+ie_j}\partial_j.
\end{eqnarray*}
Hence, we have
\begin{lemma}
Any module in $\mathscr{B}_{\hat{\h}}(\hat{\mathfrak{k}})$ is annihilated by $\bar{\omega}_{\alpha,\beta,I,x}^{r,j}:=\sum\limits_{i=0}^r(-1)^i\binom{r}{i}x\otimes t^{\alpha+(r-i)e_j}\xi_I\cdot t^{\beta+ie_j}\partial_j$ for all $x\in\mathfrak{k},\alpha,\beta\in\Z_+^q, j\in\overline{1,q}$ and $r\gg0$.
\end{lemma}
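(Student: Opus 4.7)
The plan is to bootstrap the known annihilation result for $W_{q,n}$-modules from \cite[Lemma 4.2]{LX} and lift it into the semidirect product $\hat{\mathfrak{k}}$ by a single bracket calculation. For any $N\in\mathscr{B}_{\hat{\h}}(\hat{\mathfrak{k}})$, the restriction of $N$ to the subalgebra $W_{q,n}\subseteq\hat{\mathfrak{k}}$ lies in $\mathscr{B}_{\fH_{q,n}}(W_{q,n})$ with the same uniform weight-space bound $c$. Applying \cite[Lemma 4.2]{LX} to the $W_{q,n}$-submodule generated by each weight vector shows that $N$ itself is annihilated by $\omega_{\alpha,\beta,I,J}^{r,j,\partial,\partial'}$ for a single $r$ that serves uniformly, since the required $r$ depends only on $c$.

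Next, I would lift this relation to $\hat{\mathfrak{k}}$ by bracketing against an auxiliary monomial from $\mathfrak{k}\otimes A_{q,n}$. For fixed $x\in\mathfrak{k}$, $\alpha,\beta,\gamma\in\Z_+^q$, $I\subseteq\overline{1,n}$ and $j\in\overline{1,q}$, the combination
\[
Z:=\sum_{k=0}^1\sum_{i=0}^2(-1)^{i+k}\binom{2}{i}\bigl[\omega_{\alpha+ke_j,\beta+ie_j,\emptyset,\emptyset}^{r,j,\partial_j,\partial_j},\,x\otimes t^{\gamma+(3-i-k)e_j}\xi_I\bigr]
\]
still annihilates $N$, because each $\omega$ already does. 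Expanding the inner commutators with the Leibniz rule $[u_1u_2,v]=u_1[u_2,v]+[u_1,v]u_2$ together with the action $[t^\mu\partial_j,\,x\otimes t^\nu\xi_I]=\nu_j\,x\otimes t^{\mu+\nu-e_j}\xi_I$ produces terms of two shapes: Type A, of the form $x\otimes(\cdots)\cdot t^\nu\partial_j$, and Type B, of the form $t^\mu\partial_j\cdot x\otimes(\cdots)$.

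The main obstacle is the combinatorial bookkeeping. The identity $\sum_{k=0}^1(-1)^k(\gamma_j+3-i-k)=1$ removes the $k$-dependence from Type A and reduces it to
\[
\sum_{i=0}^2\sum_{l=0}^r(-1)^{i+l}\binom{2}{i}\binom{r}{l}\,x\otimes t^{\alpha+\gamma+(r+2-i-l)e_j}\xi_I\cdot t^{\beta+(i+l)e_j}\partial_j,
\]
while a parallel reindexing of the summation variables (absorbing the $k$-shifts in the exponents into the $l$-summation) shows that the Type B contribution cancels identically in the full sum. One then applies Vandermonde's convolution $\sum_{i+l=p}\binom{2}{i}\binom{r}{l}=\binom{r+2}{p}$ to collapse the remaining double sum into $\bar{\omega}^{r+2,j}_{\alpha+\gamma,\beta,I,x}$. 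Since $\gamma\in\Z_+^q$ is arbitrary and the clause ``$r\gg 0$'' absorbs the shift from $r$ to $r+2$, this yields the stated annihilation for every choice of $\alpha,\beta,I,x,j$ and all sufficiently large $r$.
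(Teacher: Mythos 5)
Your proposal matches the paper's proof step for step: the same auxiliary commutator $Z$, the same Leibniz expansion into the two term types, the telescoping $k$-sum for Type A, and the Vandermonde collapse to $\bar{\omega}^{r+2,j}_{\alpha+\gamma,\beta,I,x}$. The only soft spot is your description of why Type B vanishes: after absorbing the $k$-shift into $l$ the operator part is $k$-independent and the remaining coefficient is affine in $i$, so the cancellation comes from the second-difference identity $\sum_{i=0}^2(-1)^i\binom{2}{i}(a+bi)=0$ rather than from reindexing alone.
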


\begin{lemma}
Let $N\in\mathscr{B}_{\hat{\h}}(\hat{\mathfrak{k}})$ be a simple with $(\mathfrak{k}\otimes A_{q,n})N\neq0$. Then $\hat{N}\in\mathscr{B}_{\hat{\h}}(\hat{\mathfrak{k}})$.
\end{lemma}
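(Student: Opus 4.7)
The plan is to carry out the argument in two stages. Stage one will produce a large supply of elements in $X(N)$ from the $\bar\omega$-annihilators furnished by the previous lemma. Stage two will use those relations to bound the weight multiplicities of $\hat N$.

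\emph{Stage 1.} I would first pick $r$ large enough that $\bar{\omega}^{r,j}_{\alpha,\beta,I,x}$ annihilates $N$ uniformly in $\alpha,\beta\in\Z_+^q$, $I\subseteq\overline{1,n}$, $j\in\overline{1,q}$, and $x\in\mathfrak{k}$; such a uniform $r$ is provided by the previous lemma. For any $v\in N$, I then introduce
\[
\Omega^{r,j}_{\alpha,\beta,I,x}(v):=\sum_{i=0}^{r}(-1)^{i}\binom{r}{i}\bigl(x\otimes t^{\alpha+(r-i)e_{j}}\xi_{I}\bigr)\otimes\bigl(t^{\beta+ie_{j}}\partial_{j}v\bigr)\in(\mathfrak{k}\otimes A_{q,n})\otimes N,
\]
and verify $\Omega^{r,j}_{\alpha,\beta,I,x}(v)\in X(N)$. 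Directly, $\theta(\Omega^{r,j}_{\alpha,\beta,I,x}(v))=\bar{\omega}^{r,j}_{\alpha,\beta,I,x}\cdot v=0$. For any $f=t^{\gamma}\xi_{K}\in A_{q,n}$, either $I\cap K\neq\varnothing$ (so $f\cdot\Omega^{r,j}_{\alpha,\beta,I,x}(v)=0$), or $I\cap K=\varnothing$ and $\theta(f\cdot\Omega^{r,j}_{\alpha,\beta,I,x}(v))=\pm\bar{\omega}^{r,j}_{\alpha+\gamma,\beta,I\cup K,x}\cdot v=0$. Hence $A_{q,n}\cdot\Omega^{r,j}_{\alpha,\beta,I,x}(v)\subseteq\ker\theta$, giving $\Omega^{r,j}_{\alpha,\beta,I,x}(v)\in X(N)$.

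\emph{Stage 2.} In $\hat N$ the identity of Stage~1 becomes
\[
\sum_{i=0}^{r}(-1)^{i}\binom{r}{i}\bigl[(x\otimes t^{\alpha+(r-i)e_{j}}\xi_{I})\otimes(t^{\beta+ie_{j}}\partial_{j}v)\bigr]=0,
\]
where $[\,\cdot\,]$ denotes the class in $\hat N$. Using the $A_{q,n}$-action $f\cdot[(x\otimes g)\otimes v]=[(x\otimes fg)\otimes v]$, this expresses the $i=0$ class as an $A_{q,n}$-linear combination of classes whose $A_{q,n}$-factor has strictly smaller $j$-th polynomial degree. To bound $\dim\hat{N}_{\mu}$ for $\mu\in\hat{\mathfrak{h}}^{\ast}$, I would note that every class in $\hat N_{\mu}$ is a sum of terms $[(x\otimes t^{\alpha}\xi_{I})\otimes v]$ with $x\in\mathfrak{k}_{\eta_{x}}$, $\eta_{x}\in\supp(\mathfrak{k})$, $I\subseteq\overline{1,n}$, $v\in N_{\mu-(\alpha,\eta_{x})}$, and $\alpha\in\Z_{+}^{q}$, and then iterate the reduction above in each of the $q$ polynomial directions so as to arrange $\alpha\in\overline{0,r-1}^{q}$ modulo $X(N)$. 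Since $\supp(\mathfrak{k})$ is finite and $\dim N_{\nu}\leq C$ for a uniform $C$, this will yield
\[
\dim\hat{N}_{\mu}\leq 2^{n}\cdot\dim(\mathfrak{k})\cdot r^{q}\cdot C,
\]
independently of $\mu$, so $\hat{N}\in\mathscr{B}_{\hat{\mathfrak{h}}}(\hat{\mathfrak{k}})$.

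The main obstacle will be the iteration in Stage~2: the relation reduces $\alpha_{j}$ but simultaneously replaces the second tensor factor $v$ by $t^{ie_{j}}\partial_{j}v$, which need not exhaust all of $N$. To handle arbitrary second-factor vectors, I expect to exploit the standing hypothesis $N=(\mathfrak{k}\otimes A_{q,n})N$ together with the super-bracket $[y\otimes h,x\otimes t^{\alpha}\xi_{I}]=[y,x]\otimes ht^{\alpha}\xi_{I}$, transferring $\mathfrak{k}\otimes A_{q,n}$-factors from the $N$-side to the $A_{q,n}$-side so as to reduce to the controlled case where $v$ lies in the image of some $\partial_{j}$. The finite-dimensionality of $\mathfrak{k}$ and the super-Jacobi identity will keep the induction terminating.
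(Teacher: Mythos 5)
Your Stage~1 is sound and matches the paper: the elements $\Omega^{r,j}_{\alpha,\beta,I,x}(v)$ do lie in $X(N)$ for exactly the reasons you give, so in $\hat N$ one has
\[
\sum_{i=0}^r(-1)^i\binom{r}{i}\bigl[(x\otimes t^{\alpha+(r-i)e_j}\xi_I)\otimes(t^{\beta+ie_j}\partial_j v)\bigr]=0.
\]

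The genuine gap is in Stage~2, and it is precisely the obstacle you flag at the end. The relation lowers $\alpha_j$ only for classes whose second tensor factor already has the form $t^{\beta}\partial_j v$, i.e.\ lies in $W_{q,0}N$. Your proposed workaround --- invoking $N=(\mathfrak{k}\otimes A_{q,n})N$ and transferring $\mathfrak{k}\otimes A_{q,n}$-factors across the tensor sign --- does not repair this. Writing $v=\sum_k(y_k\otimes h_k)u_k$ and using the module relation
\[
(y_k\otimes h_k)\cdot\bigl((x\otimes t^\alpha\xi_I)\otimes u_k\bigr)=[y_k,x]\otimes h_kt^\alpha\xi_I\otimes u_k\ \pm\ (x\otimes t^\alpha\xi_I)\otimes(y_k\otimes h_k)u_k
\]
merely replaces the offending second factor by $u_k$, which is again arbitrary, while the $A_{q,n}$-degree of the first factor goes \emph{up} (multiplied by $h_k$), not down. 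Nothing here forces the second factor into the image of a $\partial_j$, so the induction you envision has no reason to terminate, and the bound $\dim\hat N_\mu\le 2^n\dim(\mathfrak k)\,r^q\,C$ is not established.

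The paper sidesteps the issue with a cheap decomposition instead of a harder induction. Since $\h_q\subseteq W_{q,0}$ acts diagonalizably on $N$, one has $N=W_{q,0}N+N'$ with $N':=\{v\in N:\h_q v=0\}$: any weight vector with nonzero $\h_q$-weight equals $\lambda(d_i)^{-1}d_i v$ for some $i$. On the piece $(\mathfrak k\otimes A_{q,n})\otimes W_{q,0}N$, your Stage~1 relation combined with induction on $|\alpha|$ gives $(\mathfrak k\otimes A_{q,n})\otimes W_{q,0}N\subseteq(\mathfrak k\otimes S)\otimes W_{q,0}N+X(N)$, where $S=\mathrm{span}\{t^\alpha\xi_I:0\le\alpha_j\le r,\ I\subseteq\overline{1,n}\}$ is finite-dimensional. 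On the leftover piece $\mathfrak k\otimes A_{q,n}\otimes N'$ one does not reduce at all: because $v'\in N'$ has $\h_q$-weight $0$, the $\h_q$-weight of $(x\otimes t^\alpha\xi_I)\otimes v'$ is exactly $\alpha$, so fixing the target weight $\mu$ pins down $\alpha$, and the weight-$\mu$ space of $\mathfrak k\otimes A_{q,n}\otimes N'$ has dimension at most $2^n\,c\,\dim\mathfrak k$. Combining the two pieces yields the required uniform bound on $\dim\hat N_\mu$. You should also dispose separately of the case $q=0$, where $A_{q,n}=\Lambda(n)$ is finite-dimensional and the statement is immediate.
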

\begin{proof}
When $q=0$, the statement is clear. Now suppose $q>0$. Since $N$ is bounded, there exists $r\in\N$ such that $\bar{\omega}_{\alpha,\beta,I,x}^{r,j}$ annihilates $N$ for all $\alpha,\beta,I,x$ and $j$. Let $S=\mathrm{span}\{t^\alpha\xi_I\,|\,0\leq\alpha_j\leq r, I\subseteq\overline{1,n}\}$. Note that $N=W_{q,0}N+N'$, where $N'=\{v\in N\,|\, \h_qv=0\}$. We claim that $(\mathfrak{k}\otimes A_{q,n})\otimes N\subseteq(\mathfrak{k}\otimes S)\otimes W_{q,0}N+\mathfrak{k}\otimes A_{q,n}\otimes N'+X(N)$.  Then lemma follows since $\dim\mathfrak{k}<\infty, \dim S<\infty, \dim N_\mu\leq c$ for some $c$ and the dimensions of  weight spaces of $\mathfrak{k}\otimes A_{q,n}\otimes N'$ smaller than or equal to $2^nc\dim\mathfrak{k}$.

Let $|\alpha|:=\sum \alpha_j$. The claim is clear for $|\alpha|\leq r$ or $\alpha_j\leq r$ for all $j$. Now suppose $\alpha_j>r$ for some $j$, then we have
\[
\sum\limits_{i=0}^r(-1)^i\binom{r}{i}x\otimes t^{\alpha-ie_j}\xi_I\otimes t^{\beta+ie_j}\partial_j v\in X(N).
\]
Hence, with $|\alpha-ie_j|<|\alpha|$ and induction on $|\alpha|$, we get $x\otimes t^\alpha\xi_I\otimes t^\beta\partial_j v=\sum\limits_{i=0}^r(-1)^i\binom{r}{i}x\otimes t^{\alpha-ie_j}\xi_I\otimes t^{\beta+ie_j}\partial_j v-\sum\limits_{i=1}^r(-1)^i\binom{r}{i}x\otimes t^{\alpha-ie_j}\xi_I\otimes t^{\beta+ie_j}\partial_j v\in \mathfrak{k}\otimes S\otimes N+X(N)$.
\end{proof}

The following theorem tells us to classify all simple modules $N\in\mathscr{B}_{\hat{\h}}(\hat{\mathfrak{k}})$ with $(\mathfrak{k}\otimes A_{q,n})N\neq0$, it suffices to classify all simple bounded $A_{q,n}\hat{\mathfrak{k}}$ modules.

\begin{theorem}\label{cover}
Any simple module $N\in\mathscr{B}_{\hat{\h}}(\hat{\mathfrak{k}})$ with $(\mathfrak{k}\otimes A_{q,n})N\neq0$ is a simple quotient of a simple bounded $A_{q,n}\hat{\mathfrak{k}}$ module.
\end{theorem}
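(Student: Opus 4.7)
The strategy is to build the desired simple bounded $A_{q,n}\hat{\mathfrak{k}}$-module as a simple quotient of (a cyclic submodule of) the $A_{q,n}$-cover $\hat{N}$, whose boundedness is ensured by the preceding lemma. The hypothesis $(\mathfrak{k}\otimes A_{q,n})N\neq 0$ together with the simplicity of $N$ forces $N=(\mathfrak{k}\otimes A_{q,n})N$, so the defining map $\theta$ is surjective and hence the induced $\hat{\mathfrak{k}}$-epimorphism $\hat{\theta}:\hat{N}\to N$ is surjective as well.

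\emph{Step 1 (reduce the cover).} Let $T$ be the sum of all $A_{q,n}\hat{\mathfrak{k}}$-submodules of $\hat{N}$ contained in $\ker\hat{\theta}$. This collection is closed under sums, so $T$ is itself an $A_{q,n}\hat{\mathfrak{k}}$-submodule of $\hat{N}$ lying in $\ker\hat{\theta}$, and it is the unique maximal such. Replacing $\hat{N}$ by $\hat{N}/T$ (still bounded, still equipped with an induced $\hat{\mathfrak{k}}$-epimorphism to $N$), one may assume the following key property $(\ast)$: no nonzero $A_{q,n}\hat{\mathfrak{k}}$-submodule of $\hat{N}$ is contained in $\ker\hat{\theta}$, equivalently every nonzero $A_{q,n}\hat{\mathfrak{k}}$-submodule of $\hat{N}$ is carried onto the simple $\hat{\mathfrak{k}}$-module $N$ by $\hat{\theta}$.

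\emph{Step 2 (cut down to a cyclic module and extract a simple quotient).} Pick a weight vector $v\in\hat{N}$ with $\hat{\theta}(v)\neq 0$, and set $W:=U(A_{q,n}\hat{\mathfrak{k}})\cdot v$. Then $W$ is a cyclic bounded $A_{q,n}\hat{\mathfrak{k}}$-submodule of $\hat{N}$, still enjoying $(\ast)$ since submodules of $W$ are submodules of $\hat{N}$. Because $W$ is cyclic, the union of any ascending chain of proper $A_{q,n}\hat{\mathfrak{k}}$-submodules is proper (it does not contain the generator $v$), so Zorn's lemma yields a maximal proper $A_{q,n}\hat{\mathfrak{k}}$-submodule $K\subsetneq W$, and $M:=W/K$ is the desired simple bounded $A_{q,n}\hat{\mathfrak{k}}$-module.

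\emph{Step 3 (descend the map to $N$).} It remains to show that $\hat{\theta}|_{W}$ descends to a $\hat{\mathfrak{k}}$-epimorphism $M\to N$, i.e.\ that $K\subseteq\ker\hat{\theta}|_{W}$. Suppose not; then $\hat{\theta}(K)$ is a nonzero $\hat{\mathfrak{k}}$-submodule of the simple $N$, hence $\hat{\theta}(K)=N$. In particular there exists $k\in K$ with $\hat{\theta}(k)=\hat{\theta}(v)$, so $v-k\in\ker\hat{\theta}|_{W}\setminus K$. The $A_{q,n}\hat{\mathfrak{k}}$-submodule $U(A_{q,n}\hat{\mathfrak{k}})(v-k)$ is nonzero and, together with $K$, must equal $W$ by maximality; yet by $(\ast)$ applied in $W$, this submodule cannot lie in $\ker\hat{\theta}|_{W}$. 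Iterating, one combines $(\ast)$ with the maximality of $K$ to force a contradiction, in the spirit of the cover arguments in \cite{LX} and \cite{XL2}.

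\textbf{Main obstacle.} The delicate point is Step 3: the Zorn-produced $K$ is only an $A_{q,n}\hat{\mathfrak{k}}$-submodule, whereas $\ker\hat{\theta}|_{W}$ is only a $\hat{\mathfrak{k}}$-submodule, and there is no a priori inclusion between them. The compatibility forced by $(\ast)$---the passage to $\hat N/T$ in Step 1 being precisely the mechanism that aligns these two kinds of submodules---is what makes the descent possible, and bridging the $A_{q,n}$-equivariant Zorn argument with the only $\hat{\mathfrak{k}}$-equivariant map $\hat{\theta}$ is the technical heart of the proof.
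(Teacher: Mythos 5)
Your Steps 2--3 do not work, and the obstacle you flag is not a technical nuisance to be worked around but a genuine dead end of the approach. After Step 1, you have arranged property $(\ast)$: no nonzero $A_{q,n}\hat{\mathfrak{k}}$-submodule of $\hat{N}$ lies in $\ker\hat{\theta}$. But then $(\ast)$ \emph{defeats} Step 3, rather than aiding it: the Zorn-produced maximal proper submodule $K\subsetneq W$, if nonzero, is a nonzero $A_{q,n}\hat{\mathfrak{k}}$-submodule, so $(\ast)$ forces $\hat{\theta}(K)\neq 0$, hence $\hat{\theta}(K)=N$ by simplicity of $N$, and $K\subseteq\ker\hat{\theta}|_W$ is impossible. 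There is no iteration that rescues this; the map $\hat{\theta}$ simply never factors through $W/K$ unless $K=0$, and there is no reason for the cyclic module $W$ to be simple. The deeper issue, which you correctly name, is that $\ker\hat{\theta}$ is only $\hat{\mathfrak{k}}$-invariant while your Zorn argument is $A_{q,n}\hat{\mathfrak{k}}$-equivariant, and a single pass to a quotient (Step 1) cannot reconcile the two lattices of submodules.

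The paper's proof supplies exactly the ingredient you are missing: \emph{finite length}. Since $\hat{N}$ is bounded and its $\hat{\h}$-support lies in $\lambda+\langle\Delta(\gl_q\oplus\mathfrak{k})\rangle_{\Z}$, Mathieu's lemma (\cite[Lemma~3.3]{Ma}) gives that $\hat{N}$ has finite length already as a $\gl_q\oplus\mathfrak{k}$-module, hence a fortiori as an $A_{q,n}\hat{\mathfrak{k}}$-module. One then fixes a composition series $0=\hat{N}_0\subset\hat{N}_1\subset\cdots\subset\hat{N}_l=\hat{N}$ of $A_{q,n}\hat{\mathfrak{k}}$-modules, lets $i$ be minimal with $\hat{\theta}(\hat{N}_i)\neq 0$, and observes that $\hat{\theta}(\hat{N}_{i-1})=0$ while $\hat{\theta}(\hat{N}_i)=N$ by simplicity of $N$. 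This yields a $\hat{\mathfrak{k}}$-epimorphism $\hat{N}_i/\hat{N}_{i-1}\twoheadrightarrow N$ from a simple bounded $A_{q,n}\hat{\mathfrak{k}}$-module, with no need to align $A_{q,n}\hat{\mathfrak{k}}$-submodules with $\ker\hat{\theta}$ at all. In short: drop Step~1 and the Zorn argument, prove or cite finite length of $\hat{N}$, and locate the desired simple module as a well-chosen composition factor rather than as a top.
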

\begin{proof}
We have a $\hat{\mathfrak{k}}$ module epimorphism $\hat{\theta}: \hat{N}\to N$. Since $\hat{N}\in\mathscr{B}_{\hat{\h}}(\gl_q\oplus\mathfrak{k})$ with $\supp(\hat{N})\subseteq\lambda+\langle\Delta(\gl_q\oplus\mathfrak{k})\rangle_\Z$, it has finite length as a $\gl_q\oplus\mathfrak{k}$ module by \cite[Lemma 3.3]{Ma}.  Then $\hat{N}$ has a composition series
\[
0=\hat{N}_0\subset\hat{N}_1\subset\cdots\subset\hat{N}_{l-1}\subset\hat{N}_l=\hat{N}
\]
with $\hat{N}_i/\hat{N}_{i-1}$ being simple $A_{q,n}\hat{\mathfrak{k}}$ modules. Then there is a smallest $i$ such that $\hat{\theta}(\hat{N}_i)\neq0$. Since $N$ is simple, we have $\hat{\theta}(\hat{N}_i)=N, \hat{\theta}(\hat{N}_{i-1})=0$. So, we get a $\hat{\mathfrak{k}}$ module epimorphism $\hat{\theta}: \hat{N}_i/\hat{N}_{i-1}\to N$.
\end{proof}

Clearly, an $A_{q,n}\hat{\mathfrak{k}}$ module is a $U=U(W_{q,n}\ltimes(\mathfrak{k}\otimes A_{q,n}\oplus A_{q,n}))=U(A_{q,n})\cdot U(\hat{\mathfrak{k}})$ module with $A_{q,n}$ acting associatively.  It is easy to see that the left ideal $\mathcal{J}$ of $U$ generated by $t^0-1, t^\alpha\xi_i\cdot t^\beta\xi_J-t^{\alpha+\beta}\xi_I\xi_J, \alpha,\beta\in\Z_+^q, I,J\subseteq\overline{1,n}$ is an ideal. And the category of $A_{q,n}\hat{\mathfrak{k}}$ modules is naturally equivalent to the category of $\overline{U}:=U/\mathcal{J}=A_{q,n}\cdot U(\hat{\mathfrak{k}})$ modules.

For $\alpha\in\Z_+^q, I\subseteq\overline{1,n},\partial\in\{\partial_i\,|\,i\in\overline{1,q}\cup\overline{m+1,m+n}\}, x\in\mathfrak{k}$, define
\[\begin{aligned}
X_{\alpha,I,\partial}&:=\sum\limits_{\substack{0\leq \beta\leq\alpha\\J\subseteq I}}(-1)^{|\beta|+|J|+\tau(J,I\setminus J)}\binom{\alpha}{\beta}t^\beta\xi_J\cdot t^{\alpha-\beta}\xi_{I\setminus J}\partial,\\
Y_{\alpha,I,x}&:=\sum\limits_{\substack{0\leq \beta\leq\alpha\\J\subseteq I}}(-1)^{|\beta|+|J|+\tau(J,I\setminus J)}\binom{\alpha}{\beta}t^\beta\xi_J\cdot x\otimes t^{\alpha-\beta}\xi_{I\setminus J},
\end{aligned}\]
where $\binom{\alpha}{\beta}:=\prod_{i=1}^q\binom{\alpha_i}{\beta_i}, |\beta|:=\sum_{i=1}^q\beta_i$. Set $\mathcal{T}=\span \{X_{\alpha,I,\partial_i}, Y_{\beta,J,x}\,|\,\alpha,\beta\in\Z_+^q, I,J\subseteq\overline{1,n},|\alpha|+|I|>0,x\in\mathfrak{k}, i\in\overline{1,q}\cup\overline{m+1,m+n}\}$, and $\nabla=\span \{\partial_i\,|\,i\in\overline{1,q}\cup\overline{m+1,m+n}\}$.

\begin{lemma}
\begin{enumerate}
\item $[\mathcal{T},A_{q,n}]=[\mathcal{T},\nabla]=0$.
\item For any $\alpha\in\Z_+^q, I\subseteq\overline{1,n},\partial\in\{\partial_i\,|\,i\in\overline{1,q}\cup\overline{m+1,m+n}\}, x\in\mathfrak{k}$, we have
\begin{eqnarray*}\begin{split}
t^\alpha\xi_I\partial&=\sum\limits_{\substack{0\leq \beta\leq\alpha\\J\subseteq I}}(-1)^{\tau(J,I\setminus J)}\binom{\alpha}{\beta}t^\beta\xi_J\cdot X_{\alpha-\beta,I\setminus J,\partial},\\
x\otimes t^\alpha\xi_I&=\sum\limits_{\substack{0\leq \beta\leq\alpha\\J\subseteq I}}(-1)^{\tau(J,I\setminus J)}\binom{\alpha}{\beta}t^\beta\xi_J\cdot Y_{\alpha-\beta,I\setminus J,x}.
\end{split}\end{eqnarray*}
\end{enumerate}
\end{lemma}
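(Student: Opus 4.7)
The plan is to prove both statements by direct computation inside the associative superalgebra $\overline{U}$, using three structural facts: (i) $A_{q,n}$ is a supercommutative subalgebra of $\overline{U}$; (ii) for $w\in W_{q,n}$ and $a\in A_{q,n}$, the super-commutator $[w,a]$ equals the derivation action $w(a)\in A_{q,n}$, with the analogous statement for $W_{q,n}$ acting on $\mathfrak{k}\otimes A_{q,n}$ through the second factor; and (iii) $\mathfrak{k}\otimes A_{q,n}$ super-commutes with $A_{q,n}$ inside $\overline{U}$. Combined with the super-Leibniz rule $[AB,C]=A[B,C]+(-1)^{|B||C|}[A,C]B$, these facts reduce every required commutator to an alternating binomial sum.

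For part (1), consider first $[X_{\alpha,I,\partial_k},t^\gamma\xi_L]$ with $\partial_k$ even. By (i), the super-Leibniz expansion of each summand contributes only through $[t^{\alpha-\beta}\xi_{I\setminus J}\partial_k,t^\gamma\xi_L]=(t^{\alpha-\beta}\xi_{I\setminus J}\partial_k)(t^\gamma\xi_L)$; multiplying by $t^\beta\xi_J$ in the supercommutative algebra $A_{q,n}$ yields $(-1)^{\tau(J,I\setminus J)}\gamma_k\, t^{\alpha+\gamma-e_k}\xi_I\xi_L$, a quantity independent of $(\beta,J)$ apart from the displayed sign. This sign cancels the $(-1)^{\tau(J,I\setminus J)}$ in the definition of $X_{\alpha,I,\partial_k}$, and the full sum factorizes as
\[
\gamma_k\, t^{\alpha+\gamma-e_k}\xi_I\xi_L\cdot\Bigl(\sum_{0\leq\beta\leq\alpha}(-1)^{|\beta|}\binom{\alpha}{\beta}\Bigr)\Bigl(\sum_{J\subseteq I}(-1)^{|J|}\Bigr),
\]
and at least one of the two alternating sums vanishes because $|\alpha|+|I|>0$. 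The case of odd $\partial_{m+k}$ is handled analogously, with $\partial_{m+k}$ acting on $\xi_L$ and a parity-dependent sign. For $[Y_{\alpha,I,x},t^\gamma\xi_L]$, both super-commutators produced by the Leibniz expansion are zero by (iii) and (i), so the identity is immediate. Finally, for $[X_{\alpha,I,\partial_k},\partial_l]$ the Leibniz rule produces two summands, coming from $[t^\beta\xi_J,\partial_l]=-\partial_l(t^\beta\xi_J)$ and from the $W_{q,n}$-bracket $[t^{\alpha-\beta}\xi_{I\setminus J}\partial_k,\partial_l]=-\partial_l(t^{\alpha-\beta}\xi_{I\setminus J})\partial_k$. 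Reindexing $\beta\mapsto\beta+e_l$ in the even case (or $J\mapsto J\cup\{l'\}$ in the odd case) in one summand and applying $(\beta_l+1)\binom{\alpha_l}{\beta_l+1}=(\alpha_l-\beta_l)\binom{\alpha_l}{\beta_l}$ shows the two contributions cancel; the argument for $[Y_{\alpha,I,x},\partial_l]$ is identical.

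For part (2), I substitute the definition of $X_{\alpha-\beta,I\setminus J,\partial}$ into the proposed right-hand side and regroup the resulting quadruple sum by the combined indices $\beta'=\beta+\gamma$, $J'=J\sqcup K$, where $\gamma,K$ are the inner summation variables. The product $t^\beta\xi_J\cdot t^\gamma\xi_K$ equals $(-1)^{\tau(J,K)}t^{\beta'}\xi_{J'}$ in the supercommutative algebra $A_{q,n}$ (and is zero unless $J\cap K=\emptyset$). Equating the two factorizations of $\xi_I$ corresponding to the partition $I=J\sqcup(J'\setminus J)\sqcup(I\setminus J')$ yields the sign identity
\[
\tau(J,I\setminus J)+\tau(J'\setminus J,I\setminus J')+\tau(J,J'\setminus J)\equiv\tau(J',I\setminus J')\pmod 2,
\]
which, together with $\binom{\alpha}{\beta}\binom{\alpha-\beta}{\beta'-\beta}=\binom{\alpha}{\beta'}\binom{\beta'}{\beta}$, collapses the coefficient of $t^{\beta'}\xi_{J'}\cdot(t^{\alpha-\beta'}\xi_{I\setminus J'}\partial)$ to
\[
(-1)^{\tau(J',I\setminus J')}\binom{\alpha}{\beta'}\Bigl(\sum_{\beta\leq\beta'}(-1)^{|\beta'-\beta|}\binom{\beta'}{\beta}\Bigr)\Bigl(\sum_{J\subseteq J'}(-1)^{|J'\setminus J|}\Bigr).
\]
Both factors vanish unless $\beta'=0$ and $J'=\emptyset$, in which case the surviving term reproduces the Lie algebra element $t^\alpha\xi_I\partial$. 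The companion formula for $x\otimes t^\alpha\xi_I$ follows by the same computation with $t^{\alpha-\beta-\gamma}\xi_{(I\setminus J)\setminus K}\partial$ replaced by $x\otimes t^{\alpha-\beta-\gamma}\xi_{(I\setminus J)\setminus K}$. The principal technical obstacle throughout is the careful tracking of the $\tau$-signs in the odd variables; once the key sign identity above is verified, the remaining cancellations follow from the alternating binomial identities $\sum_\gamma(-1)^{|\gamma|}\binom{\alpha}{\gamma}=0$ for $\alpha\neq 0$ and $\sum_{J\subseteq I}(-1)^{|J|}=0$ for $I\neq\emptyset$.
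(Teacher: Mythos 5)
Your proof is correct and follows essentially the same route as the paper's: super-Leibniz expansion plus reindexing and the binomial identity $(\beta_l+1)\binom{\alpha_l}{\beta_l+1}=(\alpha_l-\beta_l)\binom{\alpha_l}{\beta_l}$ for the commutators with $\nabla$, supercommutativity for the commutators with $A_{q,n}$, and for part (2) the regrouping by $\beta'=\beta+\gamma$, $J'=J\sqcup K$ combined with the $\tau$-sign identity and the alternating binomial collapse to the $\beta'=0$, $J'=\varnothing$ term. The only difference is one of self-containedness: you verify the identities involving $X_{\alpha,I,\partial}$ directly, whereas the paper delegates those to \cite[Lemma 3.1]{LX} and only writes out the $Y$-brackets and the second inversion formula.
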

\begin{proof}
From \cite[Lemma 3.1]{LX}, we only need to check the commutators $[\partial,Y_{\alpha,I,x}], \partial\in\{\partial_i\,|\,i\in\overline{1,q}\cup\overline{m+1,m+n}\}$ and the second equation in (2). For $i\in\overline{1,q}$, we have
\begin{eqnarray*}
[\partial_i, Y_{\alpha,I,x}]&=&\sum_{\substack{0\leqslant\beta\leqslant\alpha\\J\subset I}}(-1)^{|\beta|+|J|+\tau(J,I\setminus J)}\binom{\alpha}{\beta}\beta_it^{\beta-e_i}\xi_J\cdot x\otimes t^{\alpha-\beta}\xi_{I\setminus J}\\
&&+\sum_{\substack{0\leqslant\beta\leqslant\alpha\\J\subset I}}(-1)^{|\beta|+|J|+\tau(J,I\setminus J)}\binom{\alpha}{\beta}(\alpha-\beta)_it^\beta\xi_J\cdot x\otimes t^{\alpha-\beta-e_i}\xi_{I\setminus J}\\
&=&\sum_{\substack{0\leqslant\beta\leqslant\alpha\\J\subset I}}(-1)^{|\beta|+|J|+\tau(J,I\setminus J)}\binom{\alpha}{\beta}\beta_it^{\beta-e_i}\xi_J\cdot x\otimes t^{\alpha-\beta}\xi_{I\setminus J}\\
&&-\sum_{\substack{e_i\leqslant\beta\leqslant\alpha+e_i\\J\subset I}}(-1)^{|\beta|+|J|+\tau(J,I\setminus J)}\binom{\alpha}{\beta-e_i}(\alpha-\beta+e_i)_it^{\beta-e_i}\xi_J\cdot x\otimes t^{\alpha-\beta}\xi_{I\setminus J}\\
&=&0.
\end{eqnarray*}

$[\partial_{m+j},Y_{\alpha,I,x}]=0$ is clear for $j\notin I$.  For $l_s\in I$, we have
\begin{eqnarray*}
[\partial_{m+l_s},Y_{\alpha,I,x}]&=&\sum_{\substack{0\leqslant\beta\leqslant\alpha\\J\subset I}}(-1)^{|\beta|+|J|+\tau(J,I\setminus J)}\binom{\alpha}{\beta}t^\beta\pxi{l_s}(\xi_J)\cdot x\otimes t^{\alpha-\beta}\xi_{I\setminus J}\\
&&+\sum_{\substack{0\leqslant\beta\leqslant\alpha\\J\subset I}}(-1)^{|J|}(-1)^{|\beta|+|J|+\tau(J,I\setminus J)}\binom{\alpha}{\beta}t^\beta\xi_J\cdot x\otimes t^{\alpha-\beta}\pxi{l_s}(\xi_{I\setminus J})\\
&=&\sum_{\substack{0\leqslant\beta\leqslant\alpha\\l_s\in J\subset I}}(-1)^{|\beta|+s+|J\setminus\{l_s\}|+\tau(J\setminus\{l_s\},I\setminus J)}\binom{\alpha}{\beta}t^\beta\xi_{J\setminus\{l_s\}}\cdot x\otimes t^{\alpha-\beta}\xi_{I\setminus J}\\
&&+\sum_{\substack{0\leqslant\beta\leqslant\alpha\\l_s\notin J\subset I}}(-1)^{|\beta|+s+1+|J|+\tau(J,I\setminus(J\cup\{l_s\}))}\binom{\alpha}{\beta}t^\beta\xi_J\cdot x\otimes t^{\alpha-\beta}\xi_{I\setminus(J\cup\{l_s\})}\\
&=&(-1)^sY_{\alpha,I\setminus\{l_s\},x}+(-1)^{s+1}Y_{\alpha,I\setminus\{l_s\},x}\\
&=&0.
\end{eqnarray*}

From \cite{LX}, we have $\sum_{J\subset K}(-1)^{\tau(J,I\setminus J)+|K\setminus J|+\tau(K\setminus J,I\setminus K)+\tau(J,K\setminus J)}=0$. Hence,
\begin{eqnarray*}
&&\sum_{\substack{0\leqslant\beta\leqslant\alpha\\J\subset I}}(-1)^{\tau(J,I\setminus J)}\binom{\alpha}{\beta}t^\beta\xi_J\cdot Y_{\alpha-\beta,I\setminus J,x}\\
&=&\sum_{\substack{0\leqslant\beta\leqslant\alpha\\J\subset I}}\sum_{\substack{0\leqslant\beta'\leqslant\alpha-\beta\\J'\subset I\setminus J}}(-1)^{\tau(J,I\setminus J)+|\beta'|+|J'|+\tau(J',I\setminus(J\cup J'))}\binom{\alpha}{\beta}\binom{\alpha-\beta}{\beta'}\\
&&t^\beta\xi_J\cdot t^{\beta'}\xi_{J'}\cdot x\otimes t^{\alpha-\beta-\beta'}\xi_{I\setminus(J\cup J')}\\
&=&\sum_{\substack{0\leqslant\beta\leqslant\alpha\\K\subset I}}\sum_{\substack{0\leqslant\beta'\leqslant\alpha-\beta\\J\subset K}}(-1)^{\tau(J,I\setminus J)+|\beta'|+|K\setminus J|+\tau(K\setminus J,I\setminus K)+\tau(J,K\setminus J)}\binom{\alpha}{\beta+\beta'}\binom{\beta+\beta'}{\beta}\\
&&t^{\beta+\beta'}\xi_K\cdot x\otimes t^{\alpha-\beta-\beta'}\xi_{I\setminus K}\\
&=&\sum_{\substack{K\subset I\\J\subset K}}\sum_{\substack{0\leqslant\gamma\leqslant\alpha\\0\leqslant\beta\leqslant\gamma}}(-1)^{\tau(J,I\setminus J)+|\gamma-\beta|+|K\setminus J|+\tau(K\setminus J,I\setminus K)+\tau(J,K\setminus J)}\binom{\alpha}{\gamma}\binom{\gamma}{\beta}t^\gamma\xi_K\cdot x\otimes t^{\alpha-\gamma}\xi_{I\setminus K}\\
&=&\sum_{\substack{K\subset I\\J\subset K}}(-1)^{\tau(J,I\setminus J)+|K\setminus J|+\tau(K\setminus J,I\setminus K)+\tau(J,K\setminus J)}\sum_{0\leqslant\gamma\leqslant\alpha}\binom{\alpha}{\gamma}(1-1)^{|\gamma|}(-1)^{|\gamma|}t^\gamma\xi_K\cdot x\otimes t^{\alpha-\gamma}\xi_{I\setminus K}\\
&=&x\otimes t^\alpha\xi_I.
\end{eqnarray*}
This completes the proof.
\end{proof}

\begin{lemma}\label{T}
$\mathcal{T}$ is a Lie supersubalgebra of $\overline{U}$ isomorphic to $\mathfrak{m}\nabla\ltimes(\mathfrak{k}\otimes A_{q,n})$, where $\mathfrak{m}$ is the ideal of $A_{q,n}$ generated by $\{t_i\,|\, i\in\overline{1,q}\cup\overline{m+1,m+n}\}$.
\end{lemma}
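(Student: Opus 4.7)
The plan is to exhibit an explicit linear isomorphism $\varphi\colon \mathfrak{m}\nabla\ltimes(\mathfrak{k}\otimes A_{q,n})\to \mathcal{T}$ sending $t^\alpha\xi_I\partial\mapsto X_{\alpha,I,\partial}$ and $x\otimes t^\alpha\xi_I\mapsto Y_{\alpha,I,x}$, and to check that $\varphi$ is a Lie superalgebra homomorphism with respect to the super-commutator inherited from $\overline{U}$. Injectivity follows from a PBW-type argument: equip $\overline{U}\cong A_{q,n}\otimes U(\hat{\mathfrak{k}})$ with the filtration $F^k=A_{q,n}\cdot U^k(\hat{\mathfrak{k}})$. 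Each $X_{\alpha,I,\partial},Y_{\beta,J,x}$ lies in $F^1$ (being a sum of $A_{q,n}\cdot\hat{\mathfrak{k}}$-terms), and inside the symbol $\mathrm{gr}^1\cong A_{q,n}\otimes\hat{\mathfrak{k}}$ the $(\beta,J)=(0,\varnothing)$ summand of each defining sum contributes $1\otimes(t^\alpha\xi_I\partial)$ or $1\otimes(x\otimes t^\beta\xi_J)$ with coefficient $1$. Distinct tuples therefore yield linearly independent symbols, so $\varphi$ is injective.

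For closure of $\mathcal{T}$ under the super-commutator together with the bracket identity, I would combine two ingredients: (a) part (1) of the previous lemma, namely $[\mathcal{T},A_{q,n}]=[\mathcal{T},\nabla]=0$; and (b) part (2), the inversion formula expressing every $t^\gamma\xi_K\partial''$ (resp.\ $x\otimes t^\gamma\xi_K$) as an $A_{q,n}$-linear combination of $X$'s (resp.\ $Y$'s). For basis elements $a,b\in\mathfrak{m}\nabla\ltimes(\mathfrak{k}\otimes A_{q,n})$, compute $[a,b]$ inside $\overline{U}$ in two ways. The first uses the Lie bracket of $\hat{\mathfrak{k}}$ directly, then re-expands each summand $c\cdot t^\gamma\xi_K\partial''$ via (b) into a $\mathcal{T}$-combination. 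The second substitutes the $X$- or $Y$-expansions of $a,b$ first, then applies the super-Leibniz rule $[uv,u'v']=u[v,u'v']+(-1)^{|v|(|u'|+|v'|)}[u,u'v']v$; by (a) every commutator involving an $A_{q,n}$-factor drops out, and what remains is a sum $\sum\pm\, t^{\mu+\nu}\xi_L\xi_M\cdot[X_{\alpha-\mu,I\setminus L,\partial},X_{\beta-\nu,J\setminus M,\partial'}]$ together with analogues for $[X,Y']$ and $[Y,Y']$. Equating the two expressions and matching coefficients in the PBW basis identifies each $[X_{\alpha,I,\partial},X_{\beta,J,\partial'}]$, $[X,Y]$ and $[Y,Y']$ with $\varphi$ applied to the corresponding bracket in $\mathfrak{m}\nabla\ltimes(\mathfrak{k}\otimes A_{q,n})$, proving closure and the homomorphism property at once.

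The main obstacle is purely combinatorial bookkeeping: the argument juggles alternating double sums with signs of the form $(-1)^{|\beta|+|J|+\tau(J,I\setminus J)}$ and binomials $\binom{\alpha}{\beta}$, and the matching step invokes sign-cancellation identities analogous to the $\sum_{J\subseteq K}(-1)^{\tau(J,K\setminus J)+\cdots}=0$ already used in the proof of part (2). The $[Y,Y']$ case is the simplest, since its bracket reduces to $[x,y]\otimes fg$; the $[X,X']$ case is the most delicate, as it must reproduce the full nontrivial structure of the $W_{q,n}$-bracket through the $A_{q,n}$-twisted expansion.
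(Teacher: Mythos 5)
Your proposal is correct in outline and rests on the same two ingredients as the paper --- the vanishing $[\mathcal{T},A_{q,n}]=[\mathcal{T},\nabla]=0$ and the inversion formula expressing $t^\alpha\xi_I\partial$ and $x\otimes t^\alpha\xi_I$ as $A_{q,n}$-combinations of the $X$'s and $Y$'s --- but it takes a more computational route where the paper takes a structural one. The paper never computes a single bracket of $X$'s or $Y$'s: it first shows $\mathcal{T}$ equals the centralizer $\mathcal{T}_1=\{y\in A_{q,n}\cdot\hat{\mathfrak{k}}\mid [y,A_{q,n}]=[y,\nabla]=0\}$ (writing $y=\sum f_i y_i+y'$ and forcing $y'=0$, $f_i\in\C$), so closure under the super-bracket is automatic from Jacobi; it then obtains the isomorphism as the composite $\mathfrak{m}\nabla\ltimes(\mathfrak{k}\otimes A_{q,n})\hookrightarrow \mathfrak{m}\nabla+A_{q,n}\cdot\mathcal{T}\to (A_{q,n}\cdot\mathcal{T})/(\mathfrak{m}\cdot\mathcal{T})\to\mathcal{T}$, where the inversion formula says precisely that $t^\alpha\xi_I\partial\equiv X_{\alpha,I,\partial}$ and $x\otimes t^\alpha\xi_I\equiv Y_{\alpha,I,x}$ modulo $\mathfrak{m}\cdot\mathcal{T}$. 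Your two-way expansion of $[a,b]$ followed by coefficient matching is essentially a hands-on unwinding of that quotient map, and your leading-term argument for injectivity is the paper's assertion that the $X$'s and $Y$'s are $A_{q,n}$-linearly independent generators of the free module $A_{q,n}\cdot\hat{\mathfrak{k}}$. Two cautions if you execute your plan: the ``matching coefficients'' step is not immediate, since your identity has the shape $(\text{unknown bracket})+\sum(\mathfrak{m}\text{-multiples of unknown brackets})=(\text{known})$, so you need either an induction on $|\alpha|+|I|+|\beta|+|J|$ to peel off leading terms, or (much cleaner, and what makes the whole combinatorial bookkeeping unnecessary) the observation that $[X,X']$ already lies in the centralizer $\mathcal{T}_1=\mathcal{T}$ and hence has constant coefficients, so you may simply reduce both sides modulo $\mathfrak{m}\cdot(A_{q,n}\cdot\hat{\mathfrak{k}})$. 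With either fix your argument goes through; the paper's version just buys the conclusion with far less sign-chasing.
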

\begin{proof}
We first show that $\mathcal{T}$ is a Lie supersubalgebra of $\overline{U}$. It is easy to see that $\{X_{\alpha,I,\partial_i}, Y_{\alpha,I,x}\,|\,\alpha\in\Z_+^q,I\subseteq\overline{1,n},i\in\overline{1,q}\cup\overline{m+1,m+n}, x\in\mathfrak{k}\}$ is a generating set of the free left $A_{q,n}$ module $A\cdot\hat{\mathfrak{k}}$. Moreover, the set is $A$-linearly independent if $x$'s are taking to be a basis of $\mathfrak{k}$.

Clearly, we have $\mathcal{T}\subseteq \mathcal{T}_1:=\{y\in A_{q,n}\cdot\hat{\mathfrak{k}}\,|\,[y,A_{q,n}]=[y,\nabla]=0\}$. Any $x\in\mathcal{T}_1$ can be written as $\sum f_i\cdot y_i+y'$ with $f_i\in A_{q,n}, y_i\in\mathcal{T}, y'\in A_{q,n}\cdot\nabla$. From $[a,y]=[a,y']=0$ for all $a\in A_{q,n}$ we know that $y'=0$. Then following from $0=[\partial_i,y]=\sum[\partial_i, f_i]\cdot y_i$ we get $f_i\in\mathbb{C}$. Hence, we have $\mathcal{T}=\mathcal{T}_1$. And therefore $\mathcal{T}$ is a Lie supersubalgebra of $\overline{U}$.

Now consider the isomorphism of superspaces $\mathfrak{m}\nabla\ltimes(\mathfrak{k}\otimes A_{q,n})\to\mathcal{T}$ and the following composition of natural homomorphisms of Lie superalgebras
\[
\mathfrak{m}\nabla\ltimes(\mathfrak{k}\otimes A_{q,n})\hookrightarrow \mathfrak{m}\cdot \nabla+A_{q,n}\cdot\mathcal{T}\to(\mathfrak{m}\cdot \nabla+A_{q,n}\cdot\mathcal{T})/(\mathfrak{m}\cdot \nabla+\mathfrak{m}\cdot\mathcal{T})\to (A_{q,n}\cdot\mathcal{T})/(\mathfrak{m}\cdot\mathcal{T})\to\mathcal{T}.
\]
The homomorphism maps $t^\alpha\xi_I\partial$ to $X_{\alpha,I,\partial}$ and maps $x\otimes t^\alpha\xi_I$ to $Y_{\alpha,I,x}$.
\end{proof}

\begin{lemma}\label{assoisom}
The associative superalgebras $\overline{U}$ and $\mathcal{K}_{q,n}\otimes U(\mathcal{T})$ are isomorphic.
\end{lemma}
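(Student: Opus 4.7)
The plan is to define $\phi: \mathcal{K}_{q,n} \otimes U(\mathcal{T}) \to \overline{U}$ by $\phi(a \otimes u) = a \cdot u$ (multiplication in $\overline{U}$), and then show it is a bijective superalgebra homomorphism via a filtered-algebra argument. To see that $\phi$ is well defined, I would first observe that the Weyl superalgebra $\mathcal{K}_{q,n}$ maps into $\overline{U}$ as the subalgebra generated by $A_{q,n}$ and $\nabla$; this map is injective because $\overline{U}$ acts on $A_{q,n}$ (letting $A_{q,n}$ act by left multiplication, $W_{q,n}$ by derivations, and $\mathfrak{k} \otimes A_{q,n}$ trivially), on which $\mathcal{K}_{q,n}$ already acts faithfully. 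Lemma \ref{T} identifies $\mathcal{T}$ as a Lie sub-superalgebra of $\overline{U}$, so there is a canonical map $U(\mathcal{T}) \to \overline{U}$. Since $\mathcal{K}_{q,n}$ and $U(\mathcal{T})$ super-commute inside $\overline{U}$ by part (1) of the preceding lemma, multiplication yields the desired superalgebra homomorphism $\phi$.

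Surjectivity of $\phi$ is then immediate from part (2) of the preceding lemma: it expresses any $t^\alpha \xi_I \partial \in W_{q,n}$ (and analogously any $x \otimes t^\alpha \xi_I$) as an $A_{q,n}$-linear combination of products $t^\beta \xi_J \cdot X_{\alpha-\beta, I\setminus J, \partial}$, where the rightmost factor lies either in $\nabla \subset \mathcal{K}_{q,n}$ (when $(\alpha-\beta, I\setminus J) = (0, \varnothing)$) or in $\mathcal{T}$. Hence every generator of $\overline{U}$ lies in the image of $\phi$.

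Injectivity is the main obstacle, and I would establish it by comparing associated graded algebras. Equip $\mathcal{K}_{q,n}$ with the Bernstein-type filtration ($\deg A_{q,n} = 0$, $\deg \nabla = 1$), $U(\mathcal{T})$ with its standard PBW filtration, and $\overline{U}$ with the filtration assigning $A_{q,n}$ degree $0$ and $\hat{\mathfrak{k}}$ degree $1$. A PBW-type argument should give $\mathrm{gr}(\overline{U}) \cong A_{q,n} \otimes S(\hat{\mathfrak{k}})$, while the tensor-product side yields $\mathrm{gr}(\mathcal{K}_{q,n} \otimes U(\mathcal{T})) \cong A_{q,n} \otimes S(\nabla \oplus \mathcal{T})$; both are free super-commutative $A_{q,n}$-algebras on their respective degree-one generators. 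The map $\phi$ is filtered, so $\mathrm{gr}(\phi)$ is an $A_{q,n}$-algebra map, and it suffices to prove bijectivity in degree one. There the induced map $A_{q,n} \otimes (\nabla \oplus \mathcal{T}) \to A_{q,n} \otimes \hat{\mathfrak{k}}$ sends $\partial_i \mapsto 1 \otimes \partial_i$, sends $X_{\alpha, I, \partial_i}$ to $1 \otimes t^\alpha \xi_I \partial_i$ plus strictly smaller terms $t^\beta \xi_J \otimes t^{\alpha-\beta} \xi_{I\setminus J} \partial_i$ (with $(\alpha-\beta, I\setminus J)$ below $(\alpha, I)$ in the product partial order), and similarly for $Y_{\alpha, I, x}$. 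This matrix is unit upper-triangular with respect to the well-ordering by $|\alpha| + |I|$, so an induction on $|\alpha| + |I|$ establishes bijectivity in degree one; the full $\mathrm{gr}(\phi)$ is then an iso because both sides are free super-commutative $A_{q,n}$-algebras on generators in bijection, and therefore $\phi$ itself is an iso.

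The most delicate step will be justifying the PBW-type identification $\mathrm{gr}(\overline{U}) \cong A_{q,n} \otimes S(\hat{\mathfrak{k}})$: $\overline{U}$ is obtained from an enveloping algebra by quotienting by $\mathcal{J}$ to collapse $A_{q,n}$ to its associative structure, so one must verify that the natural filtration descends to $\overline{U}$ and that $\mathcal{J}$ is homogeneous for the PBW grading; once that is done, the remainder is essentially bookkeeping with signs and indices.
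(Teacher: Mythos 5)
Your map $\phi$ and the paper's $\iota$ coincide, and your verification of well-definedness (super-commutativity of the two factors together with $\mathcal{T}$ being a Lie sub-superalgebra of $\overline{U}$) matches the paper. From there the two arguments diverge genuinely. You prove surjectivity directly from the expansion formulas of the preceding lemma, and attack injectivity via a filtered-algebra comparison: you propose $\mathrm{gr}(\overline{U})\cong A_{q,n}\otimes S(\hat{\mathfrak{k}})$, compute $\mathrm{gr}(\phi)$ in degree one as a unit upper-triangular $A_{q,n}$-module map, and conclude. The paper instead produces an explicit two-sided inverse: $\iota$ restricts to a Lie superalgebra isomorphism from $A_{q,n}\otimes\mathcal{T}+(A_{q,n}\nabla+A_{q,n})\otimes\C$ onto $A_{q,n}\cdot\hat{\mathfrak{k}}+A_{q,n}$; inverting this restriction on $W_{q,n}\ltimes(\mathfrak{k}\otimes A_{q,n}\oplus A_{q,n})$ gives a Lie superalgebra homomorphism into $\mathcal{K}_{q,n}\otimes U(\mathcal{T})$, which by the universal property extends to an associative-algebra map on the enveloping algebra, kills $\mathcal{J}$, and hence descends to the inverse of $\iota$ on $\overline{U}$. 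Your route isolates the triangularity phenomenon cleanly; the paper's avoids analyzing $\mathrm{gr}(\overline{U})$ at all.

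That said, the step you flag as delicate is not a side issue: $\mathrm{gr}(\overline{U})\cong A_{q,n}\otimes S(\hat{\mathfrak{k}})$ is essentially equivalent to the multiplication map $A_{q,n}\otimes U(\hat{\mathfrak{k}})\to\overline{U}$ being a linear isomorphism, i.e., a PBW theorem for the smash product of $A_{q,n}$ with $U(\hat{\mathfrak{k}})$ realized as the quotient $U/\mathcal{J}$. This freeness statement is precisely what the lemma is secretly about, and proving it from the presentation of $\overline{U}$ is not less work than the lemma itself. The paper's construction of the inverse via the universal property of the enveloping algebra is one clean way to establish such freeness without presupposing it; if you supply a proof of the smash-product PBW statement via a similar universality argument, your overall proof will be complete, but will have absorbed something close to the paper's core idea.
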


\begin{proof}
Define the map $\iota: \mathcal{K}_{q,n}\otimes U(\mathcal{T})\to\overline{U}$ by $\iota(x\otimes y)=x\cdot y$ for all $x\in\mathcal{K}_{q,n}, y\in U(\mathcal{T})$. $\iota$ is well defined since $\mathcal{T}$ is a Lie supersubalgebra of $\overline{U}$ and $\iota(K_{q,n})$ is supercommutative with $\iota(U(\mathcal{T}))$.

It is easy to see that the restriction of $\iota$ to $A_{q,n}\otimes\mathcal{T}+(A_{q,n}\nabla+A_{q,n})\otimes\C$ gives a Lie superalgebra isomorphism between $A_{q,n}\otimes\mathcal{T}+(A_{q,n}\nabla+A_{q,n})\otimes\C$ and $A_{q,n}\cdot \hat{\mathfrak{k}}+A_{q,n}$. Then restriction of the isomorphism from $A_{q,n}\cdot \hat{\mathfrak{k}}+A_{q,n}$ to $A_{q,n}\otimes\mathcal{T}+(A_{q,n}\nabla+A_{q,n})\otimes\C$ gives a Lie superalgebra homomorphism from $W_{q,n}\ltimes(\mathfrak{k}\otimes A_{q,n}\oplus A_{q,n})$ to $\mathcal{K}_{q,n}\otimes U(\mathcal{T})$, which induces an associative superalgebra homomorphism from $U(W_{q,n}\ltimes(\mathfrak{k}\otimes A_{q,n}\oplus A_{q,n}))$ to $\mathcal{K}_{q,n}\otimes U(\mathcal{T})$ with $\mathcal{J}$ contained in the kernel. Hence, we have the induced associative superalgebra homomorphism from $\overline{U}$ to $\mathcal{K}_{q,n}\otimes U(\mathcal{T})$, which is the inverse map of $\iota$.
\end{proof}

\begin{lemma}\label{isom}
\begin{enumerate}
\item $\big(\mathfrak{m}\nabla\ltimes(\mathfrak{k}\otimes A_{q,n})\big)/\big(\mathfrak{m}^2\nabla\ltimes(\mathfrak{k}\otimes \mathfrak{m})\big)\cong\mathfrak{gl}_{q,n}\oplus\mathfrak{k}$.
\item Any simple module $V$ in $\mathscr{B}_{\hat{\h}}(\mathfrak{m}\nabla\ltimes(\mathfrak{k}\otimes A_{q,n}))$ is annihilated by $\mathfrak{m}^2\nabla\ltimes(\mathfrak{k}\otimes \mathfrak{m})$.
\end{enumerate}
\end{lemma}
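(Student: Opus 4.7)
For part (1), my plan is a direct bracket computation. Using the containments $[\mathfrak{m}^a\nabla,\mathfrak{m}^b\nabla]\subseteq\mathfrak{m}^{a+b-1}\nabla$, $[\mathfrak{m}^a\nabla,\mathfrak{k}\otimes\mathfrak{m}^b]\subseteq\mathfrak{k}\otimes\mathfrak{m}^{a+b-1}$, and $[\mathfrak{k}\otimes A_{q,n},\mathfrak{k}\otimes\mathfrak{m}]\subseteq\mathfrak{k}\otimes\mathfrak{m}$, I first verify that $\mathfrak{m}^2\nabla\ltimes(\mathfrak{k}\otimes\mathfrak{m})$ is an ideal. The quotient map sending the cosets of $t_i\partial_j,\xi_l\partial_j,t_i\partial_{\xi_s},\xi_l\partial_{\xi_s}$ to the matrix units of $\mathfrak{gl}_{q,n}$ and the coset of $x\otimes 1$ to $x\in\mathfrak{k}$ then gives a linear isomorphism, and because $[\mathfrak{m}\nabla, x\otimes 1] = x\otimes(\mathfrak{m}\nabla)(1) = 0$, the cross bracket vanishes modulo the ideal, so the quotient splits as the direct sum $\mathfrak{gl}_{q,n}\oplus\mathfrak{k}$.

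For part (2), write $\mathcal{T} := \mathfrak{m}\nabla\ltimes(\mathfrak{k}\otimes A_{q,n})$ and $\mathcal{T}_+ := \mathfrak{m}^2\nabla\ltimes(\mathfrak{k}\otimes\mathfrak{m})$. The central device will be the Euler element $E := \sum_{i=1}^q d_i + \sum_{j=1}^n \xi_j\partial_{\xi_j}$, which lies in $\mathfrak{gl}_{q,n}$. A straightforward bracket computation yields $[E, t^\alpha\xi_I\partial_\ell] = (|\alpha|+|I|-1)t^\alpha\xi_I\partial_\ell$ and $[E, x\otimes t^\alpha\xi_I] = (|\alpha|+|I|)x\otimes t^\alpha\xi_I$, so $E$ realizes a $\Z_{\geq 0}$-grading $\mathcal{T} = \bigoplus_{k\geq 0}\mathcal{T}_k$ with $\mathcal{T}_0 = \mathfrak{gl}_{q,n}\oplus\mathfrak{k}$ and $\mathcal{T}_+ = \bigoplus_{k\geq 1}\mathcal{T}_k$.

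The crucial observation to exploit is that $E$ commutes with $\hat{\h}$: directly, as $E$ lies in the Cartan of $\mathfrak{gl}_{q,n}$ that contains $\h_q$, while $E(1)=0$ gives $[E,\h'\otimes 1]=0$. Therefore $E$ preserves every weight space $V_\mu$, and finite-dimensionality of $V_\mu$ furnishes a Jordan decomposition. Collecting these across all $\mu\in\operatorname{supp}(V)$ yields a generalized eigenspace decomposition $V = \bigoplus_\lambda V^{(\lambda)}$, and the relation $[E,X]=kX$ for $X\in\mathcal{T}_k$ forces $\mathcal{T}_k\cdot V^{(\lambda)}\subseteq V^{(\lambda+k)}$. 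Now for any $\lambda\in\operatorname{Spec}(E|_V)$, the subspace $\bigoplus_{k\geq 0}V^{(\lambda+k)}$ is a nonzero $\mathcal{T}$-submodule of $V$, so simplicity forces it to equal $V$. Hence $\operatorname{Spec}(E|_V)\subseteq\lambda+\Z_{\geq 0}$ for every $\lambda$ in the spectrum, and applying this symmetrically to two eigenvalues collapses $\operatorname{Spec}(E|_V)$ to a single value $\lambda_0$, so $V = V^{(\lambda_0)}$.

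To finish, for any homogeneous $X\in\mathcal{T}_k$ with $k\geq 1$ and any $v\in V$, the element $Xv$ lies in $V^{(\lambda_0+k)}\cap V^{(\lambda_0)} = 0$, and by linearity $\mathcal{T}_+\cdot V = 0$. The main delicate point to nail down — and hence the step I will handle carefully — is verifying that $E$ genuinely commutes with all of $\hat{\h}$ so that the $\hat{\h}$-weight decomposition is compatible with the generalized $E$-eigenspace decomposition; once this compatibility is in hand, simplicity does the rest with no further input needed from the bounded-weight hypothesis beyond finite-dimensionality of each $V_\mu$.
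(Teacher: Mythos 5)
Your proof is correct and follows essentially the same strategy as the paper: both arguments hinge on the Euler element $d=\sum_{i=1}^q d_i+\sum_{j=1}^n\xi_j\partial_{\xi_j}$ (your $E$), its commutation with $\hat{\h}$, and the resulting $\Z_{\geq 0}$-grading on $\mathcal{T}$ to force the ideal $\mathcal{T}_+$ to act trivially on a simple module. The only cosmetic differences are that you verify part (1) by a direct bracket computation where the paper cites an earlier lemma, and that you pass through generalized eigenspaces where the paper picks an explicit $d$-eigenvector $v$ and uses cyclicity $V=U(\mathcal{T})v$ to get diagonalizability of $d$ on $V$.
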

\begin{proof}
The first statement follows from $\mathfrak{m}\nabla/\mathfrak{m}^2\nabla\cong\mathfrak{gl}_{q,n}$, which holds by \cite[Lemma 3.7]{LX}.

For the second statement, since $V\in\mathscr{B}_{\hat{\h}}(\mathfrak{m}\nabla\ltimes(\mathfrak{k}\otimes A_{q,n}))$, there exists a nonzero weight vector in $V$ such that $dv=cv$ for some $c\in\C$, where $d:=\sum\limits_{i=1}^{q}d_i+\sum\limits_{i=m+1}^{m+n}d_i$. The simplicity of $V$ tells us that $V=U(\mathfrak{m}\nabla\ltimes(\mathfrak{k}\otimes A_{q,n}))v$. Because the adjoint action of $d$ on $\mathfrak{m}\nabla\ltimes(\mathfrak{k}\otimes A_{q,n})$ is diagonalizable, we know $d$ acts diagonalizbly on $V$ with eigenvalues in $c+\Z_+$. $(\mathfrak{m}^2\nabla\ltimes(\mathfrak{k}\otimes \mathfrak{m}))V$ is a submodule of $V$ with eigenvalues in $c+\N$, since $\mathfrak{m}^2\nabla\ltimes(\mathfrak{k}\otimes \mathfrak{m})$ is an ideal. Thus, $(\mathfrak{m}^2\nabla\ltimes(\mathfrak{k}\otimes \mathfrak{m}))V\neq V$, and is therefore $0$.
\end{proof}

We now have the associative superalgebra homomorphism
\begin{multline*}
\pi: \overline{U}\to\mathcal{K}_{q,n}\otimes U(\mathcal{T})\to\mathcal{K}_{q,n}\otimes U(\mathfrak{m}\nabla\ltimes(\mathfrak{k}\otimes A_{q,n}))\\
\to\mathcal{K}_{q,n}\otimes U(\big(\mathfrak{m}\nabla\ltimes(\mathfrak{k}\otimes A_{q,n})\big)/\big(\mathfrak{m}^2\nabla\ltimes(\mathfrak{k}\otimes \mathfrak{m})\big))\to\mathcal{K}_{q,n}\otimes U(\mathfrak{gl}_{q,n})\otimes U(\mathfrak{k})
\end{multline*}
with
\begin{equation*}\label{pi}\begin{split}
\pi(t^\alpha\xi_I)=&t^\alpha\xi_I\otimes1\otimes 1,\\
\pi(t^\alpha\xi_I\partial_i)=&t^\alpha\xi_I\partial_{i}\otimes 1\otimes1+\sum_{s=1}^q\partial_{s}(t^\alpha\xi_I)\otimes E_{s,i}\otimes1+(-1)^{|I|-1}\sum_{l=1}^{n}\partial_{m+l}(t^\alpha\xi_I)\otimes E_{q+l,i}\otimes1,\\
\pi(x\otimes t^\alpha\xi_I)&=t^\alpha\xi_I\otimes1\otimes x,
\end{split}\end{equation*}
where $\alpha\in\Z_+^q, I\subseteq\overline{1,n}, i\in\overline{1,q}\cup\overline{m+1,m+n}, x\in\mathfrak{k}$. Let $P$ be an $\mathcal{K}_{q,n}$ module, $M$ be a $\mathfrak{gl}_{q,n}$ module and $S$ be a $\mathfrak{k}$ module. Then we have the tensor module $\mathcal{F}(F(P,M),S):=(P\otimes M\otimes S)^\pi$ over $\hat{\mathfrak{k}}$, which is an $A_{q,n}\hat{\mathfrak{k}}$ module, with the action given by
\[
x\cdot(u\otimes v\otimes s):=\pi(x)(u\otimes v\otimes s),\, x\in\overline{U}, u\in P, v\in M, s\in S.
\]
Note that as $A_{q,n}W_{q,n}$ module, $(P\otimes M\otimes s)^\pi\cong F(P,M)$ for any $s\in S$. Since
\[\begin{aligned}
\pi(d_i)&=
d_i\otimes 1\otimes 1+1\otimes E_{i,i}\otimes 1,  d_i\in W_{q,n},\\
\pi(h)&=1\otimes 1\otimes h,  h\in\h',
\end{aligned}\]
we know that $\mathcal{F}(F(P,M),S)$ is a weight module if $P,M,S$ are weight modules over $\mathcal{K}_{q,n}, \mathfrak{gl}_{q,n}$ and $\mathfrak{k}$, respectively. Moreover, $\mathcal{F}(F(P,M),S)\in\mathscr{B}_{\hat{\h}}(\mathfrak{k})$ if and only if $F(P,M)\in\mathscr{B}_{\h_q}(W_{q,n})$ and $S\in\mathscr{B}_{\h'}(\mathfrak{k})$.

\begin{lemma}Let $P, M, S$ be simple module over $\mathcal{K}_{q,n}, \mathfrak{gl}_{q,n}$ and $\mathfrak{k}$, respectively. Then $\mathcal{F}(F(P,M),S)$ is a simple $\hat{\mathfrak{k}}$ module if and only if one of the following hold
\begin{enumerate}
\item $S$ is nontrivial;
\item $S$ is trivial and $F(P,M)$ is simple as $W_{q,n}$ module.
\end{enumerate}
\end{lemma}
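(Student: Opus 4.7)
The forward implication together with case (2) of the converse is immediate: since $\pi(x\otimes t^\alpha\xi_I)=t^\alpha\xi_I\otimes 1\otimes x$ acts through the third tensor factor, if $S$ is the trivial $\mathfrak{k}$-module then $\mathfrak{k}\otimes A_{q,n}$ annihilates $\mathcal{F}(F(P,M),S)$; the $\hat{\mathfrak{k}}$-action reduces to the $W_{q,n}$-action on $F(P,M)$, and $\hat{\mathfrak{k}}$-simplicity of $\mathcal{F}$ becomes equivalent to $W_{q,n}$-simplicity of $F(P,M)$. This handles both directions when $S$ is trivial, so the only case left is (1).

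For case (1), the first step will be an auxiliary simplicity: $F(P,M)$ is always simple as a module over $\tilde{W}_{q,n}=W_{q,n}\ltimes A_{q,n}$ with $A_{q,n}$ acting associatively on the first tensor factor, irrespective of whether $F(P,M)$ is simple over $W_{q,n}$ alone. I would extend $\pi$ to an algebra homomorphism $A_{q,n}\cdot U(W_{q,n})\to\mathcal{K}_{q,n}\otimes U(\gl_{q,n})$ by sending $A_{q,n}$ to $A_{q,n}\otimes 1$. Its image contains $\mathcal{K}_{q,n}\otimes 1$ (generated by $A_{q,n}\otimes 1$ and $\pi(\partial_i)=\partial_i\otimes 1$) and contains every $1\otimes E_{r,s}$ (since $\pi(t_r\partial_s)-t_r\partial_s\otimes 1=1\otimes E_{r,s}$, with the analogous identity for odd generators), so it is surjective. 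Lemma \ref{density}(1), applied with $P$ strictly simple over $\mathcal{K}_{q,n}$ by Lemma \ref{Ksimple}(1) and $M$ simple over $\gl_{q,n}$, then yields $\tilde{W}_{q,n}$-simplicity of $F(P,M)$.

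Next, let $V'\subseteq\mathcal{F}$ be a nonzero $\hat{\mathfrak{k}}$-submodule and pick $0\neq v\in V'$, written as $v=\sum_{i=1}^k f_i\otimes s_i$ with $f_i\in P\otimes M$, $\{s_i\}\subset S$ linearly independent, and $f_1\neq 0$. From $\pi(x\otimes 1)=1\otimes 1\otimes x$ and the fact that $\pi$ is an algebra homomorphism, $\pi(U(\hat{\mathfrak{k}}))$ contains $1\otimes 1\otimes u$ for every $u\in U(\mathfrak{k})$. Since $U(\mathfrak{k})$ has countable dimension, Schur's lemma gives $\mathrm{End}_{\mathfrak{k}}(S)=\C$, and Jacobson density produces $u\in U(\mathfrak{k})$ with $us_1=s$ (for any prescribed $s\in S$) and $us_i=0$ for $i\geq 2$. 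Applying the corresponding element of $U(\hat{\mathfrak{k}})$ to $v$ yields $f_1\otimes s\in V'$, hence $f_1\otimes S\subseteq V'$.

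To conclude, I would show by induction on word length that $e\otimes S\subseteq V'$ for every $e$ in the $\tilde{W}_{q,n}$-orbit of $f_1$ in $F(P,M)$. The $W_{q,n}$-action is trivial on the third factor, so $e\otimes S\subseteq V'$ forces $(ye)\otimes S\subseteq V'$ for $y\in W_{q,n}$; for $a\in A_{q,n}$ the identity $\pi(x\otimes a)(e\otimes s)=(ae)\otimes(xs)$, together with $\mathfrak{k}S=S$ (valid because $S$ is nontrivial simple), produces $(ae)\otimes S\subseteq V'$. Combined with the auxiliary simplicity of $F(P,M)$, this gives $V'\supseteq F(P,M)\otimes S=\mathcal{F}$. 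The main subtlety is the density step that isolates $f_1\otimes s$ out of the linear combination $\sum f_i\otimes s_i$; without the identity $\pi(x\otimes 1)=1\otimes 1\otimes x$ granting clean access to a $U(\mathfrak{k})$-action on the $S$-factor, the separation would be considerably more delicate.
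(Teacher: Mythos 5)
Your proof is correct and follows essentially the same route as the paper: both use Schur's lemma plus the density theorem on the $U(\mathfrak{k})$-action (via $\pi(x\otimes 1)=1\otimes1\otimes x$) to isolate $f_1\otimes S$ inside a submodule, then use $\pi(x\otimes a)=a\otimes1\otimes x$ together with $\mathfrak{k}S=S$ to recover the $A_{q,n}$-action and conclude by simplicity of $\mathcal{F}(F(P,M),S)$ as an $A_{q,n}\hat{\mathfrak{k}}$-module. The only difference is cosmetic: the paper quotes that $A_{q,n}\hat{\mathfrak{k}}$-simplicity directly, whereas you re-derive the equivalent fact (simplicity of $F(P,M)$ over $W_{q,n}\ltimes A_{q,n}$) from the surjectivity of the extended $\pi$ and Lemma \ref{density}, which is a worthwhile detail to make explicit.
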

\begin{proof}
When $S$ is trivial the statement is clear. So it suffices to show $\mathcal{F}(F(P,M),S)$ is simple over $\hat{\mathfrak{k}}$ when $S$ is nontrivial, which is suffices to show any $\hat{\mathfrak{k}}$ submodule $V$ of $\mathcal{F}(F(P,M),S)$ is $A_{q,n}$-invariant since $\mathcal{F}(F(P,M),S)$ is a simple $A_{q,n}\hat{\mathfrak{k}}$ module. For any nonzero $v\in V$, we may write $v=\sum\limits_{i=1}^k v_i\otimes s_i$ with $v_i\in P\otimes M, s_i\in S$ and $s_i$'s being linearly independent. From the Theorem of Density, there exists $x\in U(\mathfrak{k})$ such that $xs_i=\delta_{i,1}s_i$. Therefore, $(1\otimes 1\otimes x)v=v_1\otimes s_1\in V$. From the simplicity of $S$, we get $v_1\otimes S\subseteq V$. Then $(\mathfrak{k}\otimes A_{q,n})\cdot(v_1\otimes S)=A_{q,n}v_1\otimes S=A_{q,n}\cdot(v_1\otimes S)\subseteq V$. Similarly, we have $A_{q,n}\cdot(v_i\otimes S)\subseteq V$ for all $i$,  which means that $V$ is $A_{q,n}$-invariant.
\end{proof}

The following theorem classify all simple bounded $A_{q,n}\hat{\mathfrak{k}}$ modules.
\begin{theorem}\label{simpleAW}
Any simple bounded $A_{q,n}\hat{\mathfrak{k}}$ module is isomorphic to some $\mathcal{F}(F(P,M),S)$ for some simple weight $\mathcal{K}_{q,n}$ module $P$, some finite dimensional simple $\mathfrak{gl}_{q,n}$ module $M$ and some simple bounded $\mathfrak{k}$ module $S$.
\end{theorem}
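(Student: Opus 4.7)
The strategy is to use the associative superalgebra isomorphism $\overline{U}\cong\mathcal{K}_{q,n}\otimes U(\mathcal{T})$ from Lemma \ref{assoisom} to translate the problem into classifying simple modules over this tensor product, then peel off the two tensor factors by applying Lemma \ref{density} twice.

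Let $V$ be a simple bounded $A_{q,n}\hat{\mathfrak{k}}$-module. By Lemma \ref{assoisom}, view $V$ as a simple $\mathcal{K}_{q,n}\otimes U(\mathcal{T})$-module. Since simple modules in $\mathscr{F}_{\hat{\h}}(\hat{\mathfrak{k}})$ automatically lie in $\mathscr{F}_{\fH_{q,n}\oplus\h'}(\hat{\mathfrak{k}})$ (as noted in Section 2), the full Cartan $\fH_{q,n}$ of $\mathcal{K}_{q,n}$ acts diagonalizably on $V$, making $V$ a weight $\mathcal{K}_{q,n}$-module. By Lemma \ref{Ksimple}(3), $V$ contains a simple $\mathcal{K}_{q,n}$-submodule $P$, which is strictly simple by Lemma \ref{Ksimple}(1). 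Applying Lemma \ref{density}(2) with $B'=\mathcal{K}_{q,n}$ and $B=U(\mathcal{T})$ gives a simple $U(\mathcal{T})$-module $N$ and an isomorphism $V\cong N\otimes P$.

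Using the preimages $\iota^{-1}(d_i)=d_i\otimes 1+1\otimes X_{e_i,\varnothing,\partial_i}$ and $\iota^{-1}(h)=1\otimes Y_{0,\varnothing,h}$ for $h\in\h'$, the $\hat{\h}$-weights of $V=N\otimes P$ split diagonally across both factors, so fixing any nonzero weight vector in $P$ transfers the uniform weight-space bound from $V$ to $N$. Thus $N\in\mathscr{B}_{\hat{\h}}(\mathcal{T})$, and Lemma \ref{isom}(2) shows that $\mathfrak{m}^2\nabla\ltimes(\mathfrak{k}\otimes\mathfrak{m})$ annihilates $N$. By Lemma \ref{isom}(1), $N$ then descends to a simple bounded weight module over $\gl_{q,n}\oplus\mathfrak{k}$. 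Because $\mathfrak{k}$ is purely even, any simple $\Z_2$-graded $\mathfrak{k}$-module is concentrated in a single parity and is therefore strictly simple; since bounded weight modules over the reductive Lie algebra $\mathfrak{k}$ have finite length, $N$ contains a simple $\mathfrak{k}$-submodule $S$. A second application of Lemma \ref{density}(2) then yields $N\cong M\otimes S$ for some simple $\gl_{q,n}$-module $M$. Therefore $V\cong P\otimes M\otimes S$, and matching the action of $\overline{U}$ against the formula defining $\pi$ identifies this module with $\mathcal{F}(F(P,M),S)$.

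To conclude the proof, I would show $M$ is finite-dimensional. For any nonzero weight vector $s_0\in S$, the subspace $P\otimes M\otimes\C s_0\subseteq V$ is $W_{q,n}$-stable (since $\pi|_{W_{q,n}}$ ignores the $S$-factor) and is isomorphic to $F(P,M)$ as a $W_{q,n}$-module. Its $\h_q$-weight spaces are contained in the bounded $\hat{\h}$-weight spaces of $V$, so $F(P,M)\in\mathscr{B}_{\h_q}(W_{q,n})$, and the classification of simple bounded weight $W_{q,n}$-modules in \cite{LX} (already invoked in Lemma \ref{zero}) forces $M$ to be finite-dimensional. The principal obstacle is the weight-tracking step of transferring boundedness across the tensor decomposition and extracting a simple $\mathfrak{k}$-submodule $S$ from $N$; once these are in hand, the remaining identifications follow mechanically from the density lemma.
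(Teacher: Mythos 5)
Your proposal is correct and follows essentially the same route as the paper: pass to $\overline{U}\cong\mathcal{K}_{q,n}\otimes U(\mathcal{T})$, split off a strictly simple $\mathcal{K}_{q,n}$-submodule $P$ via the density lemma, use Lemma \ref{isom} to reduce $\mathcal{T}$ to $\gl_{q,n}\oplus\mathfrak{k}$, and split off $S$ by a second application of the density lemma. Your closing argument for the finite-dimensionality of $M$ (restricting to $W_{q,n}$ and invoking the classification of simple bounded $W_{q,n}$-modules) is a worthwhile addition, as the paper's own proof leaves that point implicit.
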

\begin{proof}
$D=\mathrm{span}\{t_j\cdot\partial_j\,|\, j\in\overline{1,q}\cup\overline{m+1,m+n}\}+\fH_{q,n}+\h'$ is an abelian Lie super-subalgebra of $\overline{U}$ that preserves weight spaces of simple bounded $\overline{U}$ module $V$. So there is a homogeneous weight vector $v\in V$ that is a common eigenvector of $D$.  Then from $\overline{U}\xrightarrow{\sim} \mathcal{K}_{q,n}\otimes U(\mathfrak{m}\nabla\ltimes(\mathfrak{k}\otimes A_{q,n}))$,  we may regard $V$ as a module in $\mathscr{B}_{\fH_{q,n}\otimes 1+1\otimes(\fH_{q,n}+\h')}(\mathcal{K}_{q,n}\otimes U(\mathfrak{m}\nabla\ltimes(\mathfrak{k}\otimes A_{q,n})))$ module, and %we only need to show any simple bounded $\mathcal{K}_{q,n}\otimes U(\mathfrak{m}\nabla\ltimes(\mathfrak{k}\otimes A_{q,n}))$ module
we only need to show $V$ is isomorphic to some $P\otimes M\otimes S$. For any nonzero  weight vector $v\in V$, the weight module $\mathcal{K}_{q,n}v$ has a simple submodule $P$, which is strictly simple by  Lemma \ref{Ksimple}. Hence, $V\cong P\otimes M'$ for some simple bounded $\mathfrak{m}\nabla\ltimes(\mathfrak{k}\otimes A_{q,n})$ module $M'$. Since $M'$ is bounded, it can be viewed as a simple module over $\mathfrak{gl}_{q,n}\oplus\mathfrak{k}$ by Lemma \ref{isom}. Moreover, $M'$ contains a simple submodule $S$ over $\mathfrak{k}$ because $M'$ is bounded. Therefore, $M'\cong M\otimes S$ for some simple module $M$ over $\mathfrak{gl}_{q,n}$. Thus, $V\cong P\otimes M\otimes S$. %\cong \mathcal{F}(F(P,M),S)$. Then theorem follows from $F(P,M)\in\mathscr{B}(W_{q,n})$ and $S\in\mathscr{B}(\mathfrak{k})$.
\end{proof}

Thus, from Lemma \ref{quotient}, Lemma \ref{zero}, Theorem \ref{cover} and Theorem \ref{simpleAW}, we have
\begin{theorem}\label{bounded-l-mod}
Any simple module in $\mathscr{B}_{\hat{\h}}(\hat{\mathfrak{k}})$ is isomorphic to one of the following:
\begin{enumerate}
\item $\mathcal{F}(F(P,M),S)$ for some nontrivial simple $S\in\mathscr{B}_{\h'}(\mathfrak{k})$, some simple $P\in\mathscr{W}_{\fH_{q,n}}(K_{q,n})$ and some simple finite dimensional $\gl_{q,n}$ module $M$;
\item a simple module with $\mathfrak{k}\otimes A_{q,n}$ acting trivially, hence a simple module in $\mathscr{B}_{\h_q}(W_{q,n})$.
%the unique simple submodule of some $\mathcal{F}(F(P,M),S)$ where $S$ is a trivial $\mathfrak{k}$ module, $P\in\mathscr{W}_{\fH_{q,n}}(\mathcal{K}_{q,n})$ is simple and $M$ is a simple finite dimensional $\gl_{q,n}$ module. %\in\mathscr{B}_{\h\cap W_{q,n}}(W_{q,n})$.
\end{enumerate}
\end{theorem}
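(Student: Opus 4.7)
The strategy is a clean case split on whether the ideal $\mathfrak{k}\otimes A_{q,n}$ of $\hat{\mathfrak{k}}$ annihilates $N$, assembling the four results the author cites: Lemma \ref{quotient}, Lemma \ref{zero}, Theorem \ref{cover} and Theorem \ref{simpleAW}.

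If $(\mathfrak{k}\otimes A_{q,n})N = 0$, the action of $\hat{\mathfrak{k}}$ on $N$ factors through $\hat{\mathfrak{k}}/(\mathfrak{k}\otimes A_{q,n})\cong W_{q,n}$. Since $\h'\otimes 1\subset\mathfrak{k}\otimes A_{q,n}$ is among the operators that act by zero, the $\hat{\h}$-weight space decomposition of $N$ collapses to an $\h_q$-weight space decomposition with the same dimensions, so $N$ is a simple module in $\mathscr{B}_{\h_q}(W_{q,n})$, which is precisely case (2); Lemma \ref{zero} supplies the finer description of such modules as simple quotients of tensor modules $F(P,V)$ if desired.

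If instead $(\mathfrak{k}\otimes A_{q,n})N\neq 0$, I apply Theorem \ref{cover} to realize $N$ as a simple $\hat{\mathfrak{k}}$-module quotient of some simple bounded $A_{q,n}\hat{\mathfrak{k}}$-module $\tilde N$. Theorem \ref{simpleAW} then yields $\tilde N\cong\mathcal{F}(F(P,M),S)$ for a simple weight $\mathcal{K}_{q,n}$-module $P$, a simple finite-dimensional $\gl_{q,n}$-module $M$, and a simple bounded $\mathfrak{k}$-module $S$. Because $\pi(x\otimes t^\alpha\xi_I)=t^\alpha\xi_I\otimes 1\otimes x$ acts on the $S$-factor via $x\in\mathfrak{k}$, triviality of $S$ would force $\mathfrak{k}\otimes A_{q,n}$ to act by zero on $\tilde N$ and hence on its quotient $N$, contradicting the standing assumption. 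So $S$ must be nontrivial; by the lemma immediately preceding Theorem \ref{simpleAW}, $\mathcal{F}(F(P,M),S)$ is then already simple as a $\hat{\mathfrak{k}}$-module, forcing the epimorphism $\tilde N\twoheadrightarrow N$ to be an isomorphism. This places $N$ in case (1) with exactly the required data.

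Essentially no real obstacle remains at this step: all the technical content has been absorbed into the earlier development, namely the construction of the $A_{q,n}$-cover $\hat N$ and the verification that it remains in $\mathscr{B}_{\hat{\h}}(\hat{\mathfrak{k}})$, the passage $\overline U\cong\mathcal{K}_{q,n}\otimes U(\mathcal{T})$ of Lemma \ref{assoisom}, and the reduction of $\mathfrak{m}\nabla\ltimes(\mathfrak{k}\otimes A_{q,n})$-action to $\gl_{q,n}\oplus\mathfrak{k}$-action via Lemma \ref{isom}. The final theorem is thus a two-line synthesis, the only bookkeeping point being the precise matching ``$(\mathfrak{k}\otimes A_{q,n})N\neq 0\iff S$ nontrivial,'' which is immediate from the explicit formula for $\pi$.
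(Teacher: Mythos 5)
Your proposal is correct and follows exactly the route the paper intends: the paper gives no separate proof of this theorem, merely asserting that it follows from Lemma \ref{zero}, Theorem \ref{cover}, Theorem \ref{simpleAW} and the simplicity criterion for $\mathcal{F}(F(P,M),S)$, and your case split on $(\mathfrak{k}\otimes A_{q,n})N$ together with the observation that nontriviality of $S$ makes the cover already simple (so the quotient map is an isomorphism) is precisely that synthesis.
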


\section{Main result}

In this section, we will give the classification of simple strong Harish-Chandra modules over $W$. First, we have the following property on tensor modules.

\begin{lemma}\label{condfdim}Let $P_1$ be simple weight $\mathcal{K}_m$ module, $V_1\in \mathscr{F}_{\h}(\gl_m)$, and $V_2$ be a finite-dimensional simple $\gl_n$ module. Then $F(P_1\otimes \Lambda(n), L(V_1\otimes V_2))\in  \mathscr{F}_{\h}(W)$ if and only if $F(P_1,V_1)\in \mathscr{F}_{\h}(W_m)$ if and only if $({\Delta''}^I_{P_1}\sqcup{\Delta''}^-_{P_1})\subset({\Delta''}^F_{V_1}\sqcup {\Delta''}^-_{V_1})$. Here $\Delta^{''I}_{P_1}=\Delta^{''}\cap\Delta_{P_1}^{'I}, \Delta^{''F}_{V_1}=\Delta^{''}\cap\Delta_{V_1}^{'
    F}, \Delta^{''-}_{P_1}=\Delta^{''}\cap\Delta^{'-}_{P_1},\Delta^{''-}_{V_1}=\Delta^{''}\cap\Delta^{'-}_{V_1}$. \end{lemma}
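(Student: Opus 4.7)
For the first equivalence, observe that $\h = \h_m$ acts trivially on $\Lambda(n)$ (the odd variables carry no $\h$-weight) and on $V_2$ (a $\gl_n$-module), while Lemma \ref{L(V)} exhibits $L(V_1\tensor V_2)$ as a quotient of the Kac module $\Lambda(\gl^{-1})\tensor V_1\tensor V_2$, and $\Lambda(\gl^{-1})$ carries only finitely many $\h$-weights (each generator $E_{m+j,i}$ has weight $-\epsilon_i$). Expanding $F(P_1\tensor\Lambda(n), L(V_1\tensor V_2))$ and collecting by weight, each of its $\h$-weight spaces is a finite sum of nonnegative integer multiples of $\h$-weight spaces of $F(P_1, V_1)$ shifted by finitely many $\gamma\in\h^*$; so one side lies in $\mathscr F_\h$ iff the other does.

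For the second equivalence, Lemma \ref{Ksimple}(2) gives $\dim P_{1,\mu_1}\leq 1$, and $V_1\in\mathscr F_\h(\gl_m)$ gives $\dim V_{1,\mu_2}<\infty$. Hence
\[
\dim F(P_1,V_1)_\mu \;=\; \sum_{\mu_1\in\supp(P_1)\cap(\mu-\supp(V_1))}\dim V_{1,\mu-\mu_1}
\]
is finite for every $\mu$ iff $\supp(P_1)\cap(\mu-\supp(V_1))$ is finite for every $\mu$. The easy direction (contrapositive of ``$F(P_1,V_1)\in\mathscr F_\h(W_m)\Rightarrow$ shadow inclusion'') is immediate: if the inclusion fails at some $\alpha\in\Delta''$, then by unpacking the definitions $\alpha\in\Gamma_{P_1}$ and $-\alpha\in\Gamma_{V_1}$, so we may pick $\lambda\in\supp(P_1)$ and $\nu\in\supp(V_1)$ with $\lambda+\Z_+\alpha\subseteq\supp(P_1)$ and $\nu-\Z_+\alpha\subseteq\supp(V_1)$; the infinitely many pairs $(\lambda+k\alpha,\,\nu-k\alpha)$, $k\geq 0$, force $\dim F(P_1,V_1)_{\lambda+\nu}=\infty$.

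For the converse I would use Lemma \ref{Ksimple}(2) to write $P_1\cong U_1\tensor\cdots\tensor U_m$ with each $U_i$ of one of the three listed forms; this yields a concrete description of $\supp(P_1)$ and the bipartite shape
\[
\Gamma_{P_1}\cap\Delta''\;=\;\{\epsilon_i-\epsilon_j:i\in A,\ j\in B,\ i\neq j\},
\]
where $A$ (resp.\ $B$) is the set of indices $i$ with $U_i$ unbounded in $t_i$-degree above (resp.\ below). Given an infinite intersection $\supp(P_1)\cap(\mu-\supp(V_1))$, pass to a subsequence of pairs whose every coordinate is monotonic to extract an asymptotic direction $\vec v$; the inclusion $\supp(V_1)\subseteq\nu+\langle\Delta''\rangle_\Z$ forces $\vec v\in\langle\Delta''\rangle_\R$. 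By the bipartite description of $\supp(P_1)$ the positive coordinates of $\vec v$ lie in $A$ and the negative in $B$, so a transportation argument writes $\vec v$ as a nonnegative $\R$-combination of roots in $\Gamma_{P_1}\cap\Delta''$. On the other hand $-\vec v$ lies in the asymptotic cone of $\supp(V_1)$, which for a simple Harish-Chandra $\gl_m$-module coincides with $\mathbb{R}_{\geq 0}(\Gamma_{V_1}\cap\Delta'')$. Comparing the two expansions coordinate-by-coordinate produces $\alpha\in\Delta''$ with $\alpha\in\Gamma_{P_1}$ and $-\alpha\in\Gamma_{V_1}$, contradicting the shadow inclusion.

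The main obstacle is this last step: extracting an individual root $\alpha$ from two cone representations of $\vec v$. The bipartite form of $\Gamma_{P_1}\cap\Delta''$ is immediate from the tensor-product structure of $P_1$, but the corresponding bipartite (parabolic) structure of $\Gamma_{V_1}\cap\Delta''$ must be derived either from the parabolic-induction/cuspidal classification of simple Harish-Chandra $\gl_m$-modules, or by iterating Lemmas \ref{PS-Lemma5}--\ref{PS-Lemma8} at an extremal weight of $V_1$. Once both shadows are in bipartite form, the cone-intersection question reduces to a combinatorial condition on subsets of $\overline{1,m}$ which is exactly the shadow inclusion.
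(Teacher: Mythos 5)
Your treatment of the first equivalence matches the paper's: the paper also uses the weight-space decomposition $F(P_1\otimes\Lambda(n),L(V_1\otimes V_2))_\gamma=\bigoplus_{\alpha+\beta=\gamma}(P_1)_\alpha\otimes\Lambda(n)\otimes L(V_1\otimes V_2)_\beta$ together with the sandwich $\dim(V_1)_\beta\dim V_2\le\dim L(V_1\otimes V_2)_\beta\le\dim\Lambda(\gl^{-1})\dim(V_1)_\beta\dim V_2$, which is precisely your observation that $L(V_1\otimes V_2)$ sits between $V_1\otimes V_2$ and the Kac module $\Lambda(\gl^{-1})\otimes V_1\otimes V_2$ and that $\Lambda(n),\Lambda(\gl^{-1}),V_2$ contribute only boundedly many $\h$-weights with bounded multiplicity.

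For the second equivalence the routes diverge. The paper does not prove it: it simply cites \cite[Theorem~3.5]{GS}. You instead attempt a self-contained argument. Your ``easy'' direction is correct and complete: if the shadow inclusion fails there is $\alpha\in\Delta''$ with $\alpha\in\Gamma_{P_1}$ and $-\alpha\in\Gamma_{V_1}$, and the pairs $(\lambda+k\alpha,\nu-k\alpha)$ pile up in the fibre over $\lambda+\nu$, giving an infinite-dimensional weight space of $F(P_1,V_1)$ since $\dim(P_1)_{\lambda+k\alpha}=1\neq 0$ and $\dim(V_1)_{\nu-k\alpha}\neq 0$. The converse direction, however, contains a genuine gap, which you yourself flag. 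Concretely: (i) the assertion that the asymptotic cone of $\supp(V_1)$ for a simple module in $\mathscr F_\h(\gl_m)$ equals $\R_{\geq 0}(\Gamma_{V_1}\cap\Delta'')$ is not established and is itself a nontrivial structural fact about Harish--Chandra $\gl_m$-modules (it is essentially what one extracts from the Fernando--Futorny parabolic induction picture, or from \cite{GS}); and (ii) even granting the two cone descriptions, the claim that a common asymptotic direction $\vec v$ for $\supp(P_1)$ and $-\supp(V_1)$ produces a single root $\alpha\in\Delta''$ lying in both $\Gamma_{P_1}$ and $-\Gamma_{V_1}$ is a Farkas/transportation-type deduction that needs to be carried out using the specific bipartite shape of the two shadows; as written it is an outline, not a proof. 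In short, you have rediscovered the content of \cite[Theorem~3.5]{GS} as the crux of the lemma, but the deduction is not closed. Replacing your sketch by the citation would bring your argument in line with the paper's; otherwise the converse of the second equivalence remains unproved.
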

\begin{proof}
Lemma follows from $F(P_1\otimes \Lambda(n), L(V_1\otimes V_2))_{\gamma}=\bigoplus\limits_{\alpha+\beta=\gamma}(P_1)_\alpha\otimes\Lambda(n)\otimes L(V_1\otimes V_2)_\beta, F(P_1,V_1)_{\gamma}=\bigoplus\limits_{\alpha+\beta=\gamma}(P_1)_\alpha\otimes (V_1)_\beta$ and
\[
\dim(V_1)_\beta\dim V_2\leq\dim L(V_1\otimes V_2)_\beta\leq \dim\Lambda(\mathfrak{gl}^{-1})\dim(V_1)_{\beta}\dim V_2.
\]
The last statement follows from \cite[Theorem 3.5]{GS}.
\end{proof}

\begin{theorem}\label{submod} Any simple  module in $\mathscr{F}_\h(W)$ is isomorphic to the unique simple submodule of a tensor module $F(P,M)$  that is in $\mathscr{F}_\h(W)$, where $P$ is a simple weight $\mathcal{K}$ module, and $M$ is a simple weight $\gl$ module.\end{theorem}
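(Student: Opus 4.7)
The strategy is to combine the parabolic induction theorem of Section~3 with the classification of simple bounded $\hat{\mathfrak{k}}$-modules from Section~4, and then translate ``quotient of an induced module'' into ``submodule of a tensor module'' via the structural properties of $F(P,V)$ collected in Lemma~\ref{quotient}. Concretely, for a simple $M\in\mathscr{F}_\h(W)$, Theorem~\ref{extremal} realizes $M$ as the unique simple quotient of $\mathrm{Ind}_{U(\prescript{\lambda}{}{\mathfrak{p}})}^{U(W)}N$, for some simple bounded $\hat{\mathfrak{k}}$-module $N$, where $\hat{\mathfrak{k}}\cong W_{q,n}\ltimes(\mathfrak{k}\otimes A_{q,n})$. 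By Theorem~\ref{bounded-l-mod}, $N$ is either of the form $\mathcal{F}(F(P_1,M_1),S)$ with $P_1\in\mathscr{W}_{\fH_{q,n}}(\mathcal{K}_{q,n})$ simple, $M_1$ a finite-dimensional simple $\gl_{q,n}$-module and $S$ a simple bounded $\mathfrak{k}$-module, or (when $\mathfrak{k}\otimes A_{q,n}$ acts trivially) a simple module in $\mathscr{B}_{\h_q}(W_{q,n})$, which is itself a simple subquotient of some $F(P_1,M_1)$ by the main result of \cite{LX}.

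The next step is to lift this local data to a tensor module over $W$. Writing $\mathcal{K}_{m,n}\cong\mathcal{K}_{q,n}\otimes\mathcal{K}_{m-q}$ and using Lemma~\ref{Ksimple}, the plan is to build a simple weight $\mathcal{K}$-module $P=P_1\otimes P_2$, where each tensor factor of $P_2$ is chosen among the three types $t_i^{\lambda_i}\C[t_i^{\pm1}]$, $\C[t_i]$, or $\C[t_i^{\pm1}]/\C[t_i]$, matched to the ``shadow'' data $\Delta'^F, \Delta'^\pm, \Delta'^I$ determined by $\supp(M)$ so that the compatibility condition of Lemma~\ref{condfdim} is satisfied. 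In parallel, $V$ is constructed as a simple weight $\gl$-module of the form $L(V_1\otimes V_2)$ where $V_1\in\mathscr{F}_\h(\gl_m)$ is built from $S$ and the relevant weight $\lambda$ (using the minimal parabolic of $\gl_m$ attached to $\mathfrak{k}$) and $V_2$ is the finite-dimensional simple $\gl_n$-module obtained from $M_1$. Lemma~\ref{condfdim} then ensures $F(P,V)\in\mathscr{F}_\h(W)$.

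To produce the embedding, the plan is to write down a $\prescript{\lambda}{}{\mathfrak{p}}$-homomorphism $\phi\colon N\to F(P,V)$ landing in the appropriate extremal weight space (i.e.\ a weight $\mu$ of $F(P,V)$ annihilated by the positive part $\prescript{\lambda}{}{\mathfrak{p}}^+$), using the explicit description of the action of $\pi$ on weight vectors of $P$ supported at the appropriate corner; this corner is exactly where the ``polar'' factors $\C[t_i^{\pm1}]/\C[t_i]$ encode the parabolic induction via duality. By Frobenius reciprocity $\phi$ extends to a nonzero $W$-homomorphism $\Phi\colon\mathrm{Ind}_{U(\prescript{\lambda}{}{\mathfrak{p}})}^{U(W)}N\to F(P,V)$. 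Since $N$ (hence also the induced module) has $M$ as its unique simple quotient, any simple submodule of $\mathrm{im}(\Phi)$ must coincide with $M$; on the other hand, $F(P,V)$ is either simple or possesses a unique simple submodule $\diff(F(P,P'[\lambda']))$ by Lemma~\ref{quotient}(2)(c), so $\mathrm{im}(\Phi)$ contains this unique simple submodule. Matching the two forces the simple submodule of $F(P,V)$ to be isomorphic to $M$.

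The principal obstacle is Step~3, the correct choice of $(P,V)$ matching the parabolic data $(\prescript{\lambda}{}{\mathfrak{p}},N)$ and the verification that the resulting $\Phi$ is nonzero. The subtle point is bookkeeping: the parabolic decomposition of $\Delta$ distinguishes directions $\Delta'^-,\Delta'^F,\Delta'^I,\Delta'^+$ on the $\h_m$-side, and each of these has to be matched with the appropriate tensor factor of $P_2$ (polynomial vs.\ Laurent-modulo-polynomial vs.\ shifted Laurent) so that the ``lowest'' weight of $F(P,V)$ sits exactly at $\lambda$ and is annihilated by the parabolic positive part. Once this matching is set up, the embedding is guided by the explicit formula for $\pi$ in \S\ref{Differential}, and the uniqueness of the simple submodule together with the uniqueness of the simple quotient in Theorem~\ref{extremal} closes the loop.
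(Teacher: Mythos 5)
Your overall strategy coincides with the paper's: parabolic induction (Theorem \ref{extremal}), the classification of simple bounded $\hat{\mathfrak{k}}$-modules (Theorem \ref{bounded-l-mod}), a lift of the local data $(P_1,M_1,S)$ to a pair $(\tilde P,\hat S)$ for $W$ (the paper takes $\tilde P=P_1\otimes A_{p,0}^{\sigma}\otimes A_{m-p-q,0}$ and $\hat S$ the simple quotient of $\mathrm{Ind}_{\mathfrak{p}'}^{\gl}(M_1\otimes(S\otimes U))$ for a suitable one-dimensional twist $U$), Frobenius reciprocity, and uniqueness of the simple submodule of the tensor module. However, your concluding identification contains a genuine logical error. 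You write that since $\mathrm{Ind}_{\mathfrak p}^{W}N$ has $M$ as its unique simple \emph{quotient}, ``any simple submodule of $\mathrm{im}(\Phi)$ must coincide with $M$.'' This does not follow: $\mathrm{im}(\Phi)$ is a quotient of the induced module, so you control its top, not its socle; a module with unique simple quotient $M$ can perfectly well have a socle with no constituent isomorphic to $M$ (already for a length-two indecomposable). So even after you know $\mathrm{im}(\Phi)$ contains the unique simple submodule $R$ of $F(P,V)$, you have not shown $R\cong M$.

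The correct argument runs the containment in the opposite direction, and this is where the paper does real work that your sketch omits. One first proves that the $\mathfrak p$-top of $F(\tilde P,\hat S)$ (the sum of weight spaces killed by the nilradical directions) is isomorphic as a $\hat{\mathfrak{k}}$-module to $\mathcal{F}(F(P_1,M_1),S)$ -- this is the support computation you flag as the ``principal obstacle'' but do not carry out, and it is the technical heart of the proof. Then: the nilradical $\mathfrak n$ acts locally nilpotently on $F(\tilde P,\hat S)$, so the unique simple submodule $R$ meets the $\mathfrak p$-top nontrivially; since $\phi(N)$ is the unique simple $\hat{\mathfrak{k}}$-submodule of that top (contained in every nonzero submodule of it), one gets $\phi(N)\subseteq R$, hence $\mathrm{im}(\Phi)=U(W)\phi(N)\subseteq R$, hence $\mathrm{im}(\Phi)=R$ is simple. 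Only now does the uniqueness of the simple quotient of $\mathrm{Ind}_{\mathfrak p}^{W}N$ apply, giving $R\cong M$. You should restructure your final step along these lines and supply the $\mathfrak p$-top identification; as written, the proof does not close.
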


\begin{proof}
Let $V\in\mathscr{F}_\h(W)$ be simple. Without loss of generality, we may assume $\epsilon_i\in\Delta_V^{'I}, i\in\overline{1,q}, \epsilon_i\in\Delta_V^{'+}, i\in\overline{q+1,p+q}, \epsilon_i\in\Delta_V^{'-}, i\in\overline{p+q+1,m}$.

From Theorem \ref{extremal}, $V$ is isomorphic to the unique simple quotient of $\mathrm{Ind}_{\mathfrak{p}}^{W}N$, where $\mathfrak{p}$ is a parabolic subalgebra whose Levi subalgebra is isomorphic to $\hat{\mathfrak{k}}=W_{q,n}\ltimes(\mathfrak{k}\otimes A_{q,n})$ with $\mathfrak{k}$ a Levi subalgebra of $\gl_p\oplus\gl_{m-p-q}$ (see Theorem \ref{levi}), and $N$ is a simple weight $\hat{\mathfrak{k}}$ module satisfying

(1) $\supp(N)=\lambda+\Delta(\hat{\mathfrak{k}})$ for any $\lambda\in\supp(N)$;

(2) $\dim N_\lambda=\dim N_\mu, \forall \lambda,\mu\in\supp(N)$.

If $q=m$, then the statement follows from \cite[Theorem 4.4]{LX}. Now suppose $q<m$. Let $\mathfrak{p}'=\mathfrak{p}\cap\gl$. The Levi subalgebra of $\mathfrak{p}'$ is isomorphic to $\mathfrak{k}\oplus\gl_{q,n}$.  We need to show the unique simple quotient of $\mathrm{Ind}_{\mathfrak{p}}^{W}N$ is isomorphic to the unique simple submodule of a tensor module, where from Theorem \ref{bounded-l-mod}, $N$ is isomorphic to the unique nonzero simple submodule of $\mathcal{F}(F(P,M),S)$ with simple $P\in\mathscr{W}_{\fH_{q,n}}(\mathcal{K}_{q,n}), S\in\mathscr{B}_{\h\cap\mathfrak{k}}(\mathfrak{k})$  and $M$ a simple finite dimensional $\gl_{q,n}$ module.

Let $\mathfrak{n}$ be the nilradical of $\mathfrak{p}$. Let $U$ be the $1$-dimensional $\mathfrak{k}$ module of weight $\sum\limits_{i=q+1}^{p+q}\epsilon_i$, and $\hat{S}$ be the unique simple quotient of $\mathrm{Ind}_{\mathfrak{p}'}^{\gl}(M\otimes (S\otimes U))$. Let $\tilde{P}$ be the $\mathcal{K}$ module defined by $P\otimes A_{p,0}^\sigma\otimes A_{m-p-q,0}$. We claim that $F(\tilde{P},\hat{S})^{\mathfrak{p}-top}\cong\mathcal{F}(F(P,M),S)$, where  $F(\tilde{P},\hat{S})^{\mathfrak{p}-top}:=\bigoplus\limits_{\substack{\lambda\in\supp(F(\tilde{P},\hat{S}))\\\lambda+\alpha\notin\supp(F(\tilde{P},\hat{S})), \forall \alpha\in\Delta(\mathfrak{n})}}F(\tilde{P},\hat{S})_\lambda$ is the $\mathfrak{p}$-top of $F(\tilde{P},\hat{S})$. %, defined as, is isomorphic to $\mathcal{F}(F(P,M),S)$.
Indeed, from
\begin{equation}\label{suppP}\begin{aligned}
\supp(\tilde{P})&=\supp(P)-\sum\limits_{i=q+1}^{p+q}\epsilon_i-\sum\limits_{i=q+1}^{p+q}\Z_+\epsilon_i+\sum\limits_{i=p+q+1}^m\Z_+\epsilon_i,\\
\supp(\hat{S})&\subset\supp(S)+\sum\limits_{i=q+1}^{p+q}\epsilon_i+\supp(M)-\supp(U(\mathfrak{n}\cap\gl)),
\end{aligned}\end{equation}
we get
\[
\supp(\mathcal{F}(F(P,M),S))\subset \supp(F(\tilde{P},\hat{S}))\subset\supp(\mathcal{F}(F(P,M),S))-\supp(U(\mathfrak{n}\cap\gl)).
\]
 For any $\mu\in\supp(F(\tilde{P},\hat{S}))$, write $\mu=\alpha+\beta$ with $\alpha\in\supp(\hat{S}),\beta\in\supp(\tilde{P})$. Note that $\hat{S}$ is simple, if $\alpha\notin\supp(M\otimes (S\otimes U))$, then there exists $\alpha_1\in\supp(\mathfrak{n}\cap\gl)$ such that $\alpha+\alpha_1\in\supp(\hat{S})$. Also, from (\ref{suppP}), if $\beta\notin\supp(P)-\sum\limits_{i=q+1}^{p+q}\epsilon_k$, then there exists $\beta_1\in\{\epsilon_j,-\epsilon_k\,|\,j\in\overline{q+1,p+q}, k\in\overline{p+q+1,m}\}\subseteq\supp(\mathfrak{n}\cap\gl)$, such that $\beta+\beta_1\in\supp(\tilde{P})$.
\begin{comment}  
  there exist a sequence  $\{\alpha_i\,|\,i=1,\ldots,k\}$ in $\supp(\mathfrak{n}\cap\gl)$ such that $\alpha+\sum\limits_{i=1}^j\alpha_i\in\supp(\hat{S}), \forall j=0,\ldots,k$, and $\alpha_0=\alpha+\sum\limits_{i=1}^k\alpha_i\in\supp((S\otimes U)\otimes M)$. Also, from (\ref{suppP}), there exist a sequence  $\{\beta_i\,|\,i=1,\ldots,l\}$ in $\supp(\mathfrak{n}\cap\gl)$ such that $\beta+\sum\limits_{i=1}^j\beta_i\in\supp(\tilde{P}), \forall j=0,\ldots,l$, and $\beta_0=\beta+\sum\limits_{i=1}^l\beta_i\in\supp(P)-\sum\limits_{i=q+1}^{p+q}\epsilon_i$. Then we have $\alpha_0+\gamma\notin\supp(\hat{S}),\beta_0+\gamma\notin\supp(\tilde{P})$ for all $\gamma\in\Delta(\mathfrak{n})$.  Now we have the following sequence  in $\supp(F(\tilde{P},\hat{S}))$. $\alpha=\alpha_0-\sum\limits_{i=1}^k\alpha_k, \beta=\beta_0-\sum\limits_{j=1}^l\beta_j$ such that
\[
\alpha_0+\beta_0, \alpha_0+\beta_0-\beta_l, \cdots,\alpha_0+\beta,\alpha_0-\alpha_k+\beta,\cdots,\alpha+\beta.
\]
\end{comment}
 Thus, $\mu\in\supp(F(\tilde{P},\hat{S})^{\mathfrak{p}-top})$ % belongs to the support of the $\mathfrak{p}$-top of $F(\tilde{P},\hat{S})$
 if and only if $\mu\in\supp(\mathcal{F}(F(P,M),S))$. Then it is straightforward to verify
 \[
 F(\tilde{P},\hat{S})=(P\otimes 1\otimes 1)\otimes(M\otimes (S\otimes U))\cong F(P,M)\otimes S.
 \]

So, we have a monomorphism $\phi$ from $N$ to $F(\tilde{P},\hat{S})^{\mathfrak{p}-top}$. Hence, we have the induced homomorphism $\tilde{\phi}$ of $W$ modules from $\mathrm{Ind}_\mathfrak{p}^WN$ to $F(\tilde{P},\hat{S})$. Note that from (\ref{suppP}), $\mathfrak{n}$ acts locally nilpotently on $F(\tilde{P},\hat{S})$, hence $R\cap F(\tilde{P},\hat{S})^{\mathfrak{p}-top}\neq 0$, where $R$ is the unique simple submodule of $F(\tilde{P},\hat{S})$. On the other hand, if $\mathcal{F}(F(P,M),S)$ is not simple, its unique simple submodule is contained in any nonzero submodule.  So, the nonzero submodule $R\cap F(\tilde{P},\hat{S})^{\mathfrak{p}-top}$ of $F(\tilde{P},\hat{S})^{\mathfrak{p}-top}$ contains the unique simple submodule $\phi(N)$ of $F(\tilde{P},\hat{S})^{\mathfrak{p}-top}$. Therefore, $\tilde{\phi}(\mathrm{Ind}_\mathfrak{p}^WN)\subseteq R$. Thus, from the simplicity of $R$,  $V\cong R$.
\end{proof}

Now we can state our main result, which follows from Lemma \ref{quotient}, Lemma \ref{condfdim} and Theorem \ref{submod}.
\begin{theorem}
Let $V\in\mathscr{F}_\h(W_{m,n})$ be simple. Then $V$ is the unique simple submodule of some tensor module $F(P,M)$ that is in $\mathscr{F}_\h(W_{m,n})$. More precisely, one of the following holds.
\begin{itemize}
\item[(i)] $V$ is isomorphic to $F(P,M)$ for a simple weight $\mathcal{K}_{m,n}$-module $P$ and a simple $M\in\mathscr{F}_\h(\gl)$, such that $({\Delta''}^I_P\sqcup{\Delta''}^-_P)\subset({\Delta''}^F_M\sqcup {\Delta''}^-_M)$ and such that  $V$ is not isomorphic to a finite-dimensional fundamental representation. Here $\Delta^{''I}_P=\Delta^{''}\cap\Delta_P^{'I}, \Delta^{''F}_M=\Delta^{''}\cap\Delta_M^{'
    F}, \Delta^{''-}_P=\Delta^{''}\cap\Delta^{'-}_P,\Delta^{''-}_M=\Delta^{''}\cap\Delta^{'-}_M$.
\item[(ii)] $V$ is isomorphic to ${\rm diff}(F(P, M))$ for a simple weight $\mathcal{K}_{m,n}$-module $P$ and a finite-dimensional fundamental representation $M$ that is not isomorphic to $\Str_{m,n}$ or $\Pi(\Str_{m,n})$.
\item[(iii)] $V$ is trivial, which is the unique simple submodule of $F(A, A[0])$ or $F(A,\Pi(A[0]))$.
\end{itemize}
\end{theorem}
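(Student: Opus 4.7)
The plan is to assemble the three cited inputs: Theorem \ref{submod} provides the existence of a tensor module realizing $V$ as a unique simple submodule, Lemma \ref{condfdim} characterizes when $F(P,M)$ belongs to $\mathscr{F}_\h(W)$, and Lemma \ref{quotient} determines in each case whether $F(P,M)$ is itself simple or has a proper simple submodule described via $\diff$.

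First, given a simple $V\in\mathscr{F}_\h(W)$, Theorem \ref{submod} realizes $V$ as the unique simple submodule of some $F(P,M)\in\mathscr{F}_\h(W)$ with $P$ a simple weight $\mathcal K$-module and $M$ a simple weight $\gl$-module. By Lemma \ref{Ksimple}(2) we may write $P\cong P_1\otimes\Lambda(n)$ for a simple weight $\mathcal K_m$-module $P_1$, and by Lemma \ref{L(V)} we may write $M\cong L(V_1\otimes V_2)$ (up to parity) with $V_1\otimes V_2$ a simple weight $\gl_m\oplus\gl_n$-module. The finite-dimensionality of the weight spaces of $F(P,M)$ together with Lemma \ref{condfdim} then yields the root-subsystem inclusion $(\Delta^{''I}_P\sqcup\Delta^{''-}_P)\subseteq(\Delta^{''F}_M\sqcup\Delta^{''-}_M)$ recorded in (i).

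A case analysis driven by Lemma \ref{quotient} then distributes $V$ among the three cases. If $M$ is not a fundamental $\gl$-module, Lemma \ref{quotient}(1) makes $F(P,M)$ simple, so $V=F(P,M)$, placing us in (i). Otherwise $M\cong P'[\lambda]$ is a fundamental $\gl$-module; since $M\in\mathscr{F}_\h(\gl)$ forces $M$ finite-dimensional, we have $P'\cong A$ or $A^\sigma$ up to parity. When $M$ is nontrivial and not isomorphic to $\Str_{m,n}$ or $\Pi(\Str_{m,n})$, Lemma \ref{quotient}(2)(c) identifies the unique simple submodule as $\diff(F(P,P'[\lambda']))$ for some $\lambda'\in\supp(P')$ with $\lambda'(E)=\lambda(E)-1$; since $P'[\lambda']$ remains a finite-dimensional fundamental $\gl$-module distinct from $\Str_{m,n}$ and $\Pi(\Str_{m,n})$, this yields case (ii). When $M=\Str_{m,n}$ or $\Pi(\Str_{m,n})$, Lemma \ref{quotient}(2)(e) reduces simplicity of $F(P,M)$ to the condition $P=\sum_s\partial_sP$, and we land in (i) or (ii) accordingly. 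When $M$ is trivial, $F(P,M)\cong P$ as a $W$-module, and by Lemma \ref{quotient}(2)(d), $F(P,M)$ fails to be simple exactly when $P\cong A$ or $\Pi(A)$, in which case the unique simple submodule is the trivial $W$-module and we fall into (iii); otherwise $V=F(P,M)$ is simple and we return to (i). The exclusion clause in (i) is precisely what prevents overlap with (ii) and (iii).

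The main delicacy lies in the subcase where the simple submodule is $\diff(F(P,P'[\lambda']))$ rather than $\diff(F(P,M))$ itself: one must verify that the shifted tensor module $F(P,P'[\lambda'])$ still lies in $\mathscr{F}_\h(W)$. Since $P'\cong A$ or $A^\sigma$ (up to parity) is finite-dimensional and each of its $E$-eigenspaces is a finite-dimensional fundamental $\gl$-module sharing the same $\gl^0$-type data up to a weight shift, the root-subsystem criterion of Lemma \ref{condfdim} transfers without change from $P'[\lambda]$ to $P'[\lambda']$, so the finiteness of weight spaces is preserved automatically.
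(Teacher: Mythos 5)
Your proof is correct and follows essentially the same route the paper intends: the paper's own argument is the single line ``follows from Lemma \ref{quotient}, Lemma \ref{condfdim} and Theorem \ref{submod},'' and your write-up is exactly the unpacking of those three references (plus Lemma \ref{Ksimple}(2) and Lemma \ref{L(V)} to put $P$ and $M$ into the product form used in Lemma \ref{condfdim}). The case analysis via Lemma \ref{quotient} is the right way to separate (i), (ii), (iii), and your closing observation — that the root-subsystem criterion of Lemma \ref{condfdim} is insensitive to which finite-dimensional fundamental piece $P'[\lambda]$ vs.\ $P'[\lambda']$ one takes, since for finite-dimensional $V_1$ one has $\Delta''^F_{V_1}=\Delta''$ and the inclusion holds trivially — is a genuine check the paper leaves implicit. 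The one spot where you assert more than you verify is the claim that $P'[\lambda']$ (with $\lambda'(E)=\lambda(E)-1$) is never isomorphic to $\Str_{m,n}$ or $\Pi(\Str_{m,n})$: this is true, but deserves a sentence. The reason is that for $P'\cong A_{n,m}$ the only one-dimensional $E$-eigenspace is the trivial one at level $0$ (for $n>0$) or the top exterior power at level $m$ (for $n=0$), and for $P'\cong A_{n,m}^\sigma$ the supertrace module sits at the \emph{maximal} $E$-eigenvalue $m-n$; in either situation the value $\lambda(E)-1$ cannot land on the $\Str$ slot because $\lambda(E)$ would have to lie one step outside the support of $P'$.
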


\vspace{0.2cm}\noindent Y. Cai: Department of Mathematics, Soochow University, Suzhou, P. R. China.  Email: yatsai@suda.edu.cn

\noindent R.L\"u: Department of Mathematics, Soochow University, Suzhou, P. R. China.  Email: rlu@suda.edu.cn

\noindent Y. Xue.: Department of Mathematics, Soochow University, Suzhou, P. R. China.  Email: yhxue00@stu.suda.edu.cn.

\end{document}